\documentclass[11pt,reqno]{amsart}
\usepackage{amsmath, amssymb, amsthm}
\usepackage[linktocpage=true]{hyperref}
\hypersetup{colorlinks,linkcolor=blue,urlcolor=blue,citecolor=blue}
\usepackage{mathtools} 
\usepackage[table]{xcolor}
\usepackage{stmaryrd} 
\usepackage{bbm} 
\usepackage{leftidx} 
\usepackage{mathabx} 
\usepackage{rotating} 
\usepackage{wasysym} 
\usepackage[mmddyy]{datetime}
\usepackage{comment}
\usepackage{makecell}
\usepackage[bbgreekl]{mathbbol} 
\DeclareSymbolFontAlphabet{\mathbbm}{bbold}
\DeclareSymbolFontAlphabet{\mathbb}{AMSb}
\usepackage{arydshln}
\makeatletter
\renewcommand*\env@matrix[1][*\c@MaxMatrixCols c]{%
  \hskip -\arraycolsep
  \let\@ifnextchar\new@ifnextchar
  \array{#1}}
\makeatother
\usepackage{youngtab,young}

\makeatletter
\newcommand{\labeltarget}[1]{\Hy@raisedlink{\hypertarget{#1}{}}}
\makeatother
\usepackage{todonotes}
\usepackage{pgf, tikz, colortbl}
\usetikzlibrary{cd}

\numberwithin{equation}{subsection}
\usepackage[margin = 0.9in]{geometry}
\DeclareMathSymbol{\hp}{\mathord}{AMSa}{"39}

\newcommand{\CC}{\mathbb{C}}

\newcommand{\QQ}{\mathbb{Q}}
\newcommand{\ZZ}{\mathbb{Z}}

\newcommand{\mrm}{\mathrm}

\usepackage[shortlabels]{enumitem}

\newenvironment{enua}{\begin{enumerate}[label=\textup{(\alph*)}]
}{\end{enumerate}}

\usepackage[capitalise]{cleveref}
\theoremstyle{plain}
\newtheorem{thm}{Theorem}[subsection]
\newtheorem{prop}[thm]{Proposition}
\newtheorem{lem}[thm]{Lemma}
\newtheorem{cor}[thm]{Corollary}

\newtheorem{mainthm}{Theorem}

\newtheorem{maincor}[mainthm]{Corollary}

\theoremstyle{definition}
\newtheorem{defn}[thm]{Definition}

\theoremstyle{remark}
\newtheorem{rmk}[thm]{Remark}
\newtheorem{expl}[thm]{Example}

\AddToHook{env/prop/begin}{\crefalias{thm}{prop}}
\AddToHook{env/lem/begin}{\crefalias{thm}{lem}}
\AddToHook{env/cor/begin}{\crefalias{thm}{cor}}
\AddToHook{env/conj/begin}{\crefalias{thm}{conj}}
\AddToHook{env/defn/begin}{\crefalias{thm}{defn}}
\AddToHook{env/notation/begin}{\crefalias{thm}{notation}}
\AddToHook{env/rmk/begin}{\crefalias{thm}{rmk}}
\AddToHook{env/expl/begin}{\crefalias{thm}{expl}}

\Crefname{thm}{Theorem}{Theorems}
\Crefname{lem}{Lemma}{Lemmas}
\Crefname{prop}{Proposition}{Propositions}
\Crefname{cor}{Corollary}{Corollaries}
\Crefname{conj}{Conjecture}{Conjectures}
\Crefname{defn}{Definition}{Definitions}
\Crefname{notation}{Notation}{Notations}
\Crefname{rmk}{Remark}{Remarks}
\Crefname{expl}{Example}{Examples}

\newcommand{\define}[4]{\expandafter#1\csname#3#4\endcsname{#2{#4}}}
\forcsvlist{\define{\DeclareMathOperator}{}{}}{ad,Ad,codim,GL,SL,vdim} 
\forcsvlist{\define{\newcommand}{\mathbf}{b}}{A,C,D,H,S,T,v,w,Z,a,b,i,j,k,1,K} 
\forcsvlist{\define{\newcommand}{\mathbb}{bb}}{A,B,D,H,N,Z,F,Q,R,P,S,C,T,L,G,V} 
\forcsvlist{\define{\newcommand}{\mathcal}{c}}{A,B,C,D,E,F,G,H,I,J,K,L,M,N,O,P,Q,R,S,T,U,V,W,X,Y,Z} 
\forcsvlist{\define{\newcommand}{\mathfrak}{f}}{a,h,g,gl,H,I,p,S,sl,z} 
\forcsvlist{\define{\newcommand}{\mathrm}{}}{crit,id,Id,pt,op,sm,gdim,aff,Ann,Aut,asph,colim,End,ext,Ext,Frac,gr,Gr,Hom,Ind,Ker,Mat,mon,Proj,Res,RHom,rk,Span,sph,Stab,Sym,sgn,Tor,Tr,triv,ind} 
\forcsvlist{\define{\DeclareMathOperator}{\mathsf}{}}{BM,Bun,Coh,Coker,Cone,Fl,Fun,Graph,Hecke,Higgs,loc,Pol,Quot,Rep,Spec,Supp,Vect} 
\renewcommand{\mod}{{\mathrm{mod}}}
\def\<{\langle}%
\def\>{\rangle}%
\def\-{\text{-}}%
\newcommand{\al}{\gamma}
\newcommand{\alb}{\overline{\gamma}}

\newcommand{\Ip}{I^{(1)}}

\newcommand{\HGI}{\cH_I} 
\newcommand{\HGIp}{\cH^{(1)}} 
\newcommand{\HGbI}{\widetilde{\cH}_I} 
\newcommand{\HGbIp}{\widetilde{\cH}^{(1)}} 
\newcommand{\HGbIpk}{\widetilde{\cH}^{(1)}_\kappa} 
\newcommand{\HKbI}{\widetilde{\cH}_\kappa} 
\newcommand{\HKbIp}{\widetilde{\cH}^{(1)}_\kappa} 
\newcommand{\Vk}{\widetilde{\cW}_{\kappa}} 
\newcommand{\WIp}{{\cW}^{\Ip}} 
\newcommand{\WbIp}{\widetilde{\cW}^{\Ip}} 
\newcommand{\WbI}{\widetilde{\cW}^{I}} 
\newcommand{\SGI}{\cS} 
\newcommand{\SGIp}{\cS^{(1)}} 
\newcommand{\SGbI}{\widetilde{\cS}} 
\newcommand{\SGbIp}{\widetilde{\cS}^{(1)}} 
\newcommand{\VGGb}{\widetilde{\mathcal{W}}} 
\newcommand{\HY}{\cH_Y} 
\newcommand{\HAY}{\cH_\aff^Y} 
\newcommand{\HA}{\cH_\aff} 

\newcommand{\tcH}{\bbH} 
\newcommand{\Fr}{\bbF} 
\newcommand{\Waff}{W_{\operatorname{aff}}} 
\newcommand{\Wp}{W^{(1)}} 
\newcommand{\sroots}{\Delta} 
\newcommand{\sroot}{\alpha} 
\newcommand{\Gk}{G_{\kappa}} 
\newcommand{\Bk}{B_{\kappa}} 
\newcommand{\Uk}{U_{\kappa}} 
\newcommand{\Tk}{T_{\kappa}} 
\renewcommand{\GL}{\mathbf{GL}} 

\newcommand{\redp}{\mathfrak{r}} 
\newcommand{\mG}{\widetilde{G}} 
\newcommand{\mK}{\widetilde{K}} 
\newcommand{\Bs}{\mathsf{B}} 
\newcommand{\Qs}{\mathsf{Q}} 
\newcommand{\mY}{Y_{(\mathsf{Q},n)}} 
\newcommand{\barn}{\bar{n}} 
\newcommand{\bg}{\mathbf{g}} 
\newcommand{\bq}{\mathbf{q}} 

\newcommand{\tp}[1]{{}_{#1}t'}
\begin{document}

\title{Quantum wreath products and $p$-adic general linear group}

\begin{abstract}
We study the pro-$p$ Iwahori--Hecke algebra and its Gelfand--Graev modules for the $p$-adic general linear group and its metaplectic covers. 
We develop the theory of quantum wreath products of skew polynomial type and use it to provide transparent descriptions of these Hecke algebras and their modules that were previously inaccessible through standard $p$-adic methods. 
We introduce the notion of (anti)spherical and Kashiwara--Miwa--Stern modules for these quantum wreath products for the first time and interpret the $p$-adic Gelfand--Graev modules in terms of these new modules. 

As an application, we study the structure theory for the corresponding $p$-adic pro-$p$ Schur algebras and obtain an explicit basis and multiplication rules. 
Moreover, we give algebraic (re)proofs of several results of $p$-adic interest including the existence of PBW basis for the pro-$p$ metaplectic and Iwahori--Hecke algebras, identification of the Iwahori--Schur algebra with the quantum affine Schur algebra,
and failure of the local Shimura correspondence at the pro-$p$ level. 
\end{abstract}

\author{\sc Valentin Buciumas}
\address{Department of Mathematics\\ Pohang University of Science and Technology \\
Pohang 37673, Republic of Korea}
\thanks{}
\email{buciumas@postech.ac.kr}

\author{\sc Chun-Ju Lai}
\address{Institute of Mathematics \\ Academia Sinica \\ Taipei 106319, Taiwan} 
\email{cjlai@gate.sinica.edu.tw}

\keywords{$p$-adic groups, quantum groups, wreath product, Hecke algebras, Schur algebras}
\subjclass{22E50, 17B37, 20C08, 20G43}

\maketitle

\setcounter{tocdepth}{1}
\tableofcontents

\section{Introduction}

\subsection{On $p$-adic groups, Hecke algebras and Whittaker modules}
\subsubsection{Hecke algebras}
Let $F$ be a local non-Archimedean field with ring of integers $\cO$, maximal ideal $\varpi \cO$ for a fixed uniformizer $\varpi$ and residue field $\kappa$ of cardinality $q$. Denote by $\redp:\cO\to \kappa$ the reduction mod $\varpi$ map. 
Let $G$ be the $p$-adic group $\GL_d(F)$.
The category $\Rep G$ of complex smooth admissible representations of $G$, of utmost importance in the local and global Langlands programme, can be studied in terms of certain Hecke algebras. 
Given a compact subgroup $X$ of $G$, the Hecke algebra $C_c^\infty(X \backslash {G} / X)$ captures information about all irreducible representations of $G$ with $X$-fixed vectors. 
More precisely, the functor $V \mapsto V^X$ gives a bijection between irreducibles of $G$ with $X$-fixed vector and irreducibles of $C_c^\infty(X \backslash {G} / X)$.
Important examples of Hecke algebras are the spherical, Iwahori and pro-$p$ Hecke algebras where $X$ is taken to be $K := \GL_d(\cO)$, $I := \redp^{-1} (\mathbf{B}(\kappa)) \subset K$ and $\Ip := \redp^{-1} (\mathbf{U}(\kappa))\subset K$, resp. 
Here, $\mathbf{B}$ and $\mathbf{U}$ denote the standard Borel and unipotent of $\GL_d$, resp. and $\redp:K \to \GL_d(\kappa)$ is the mod $\varpi$ reduction map.
 
The irreducible representations of $G$ with $K$-fixed vectors are called unramified; they appear at almost all places in an automorphic representation, and as such they are important in the theory of automorphic forms. 
The irreducible representations of $G$ with $I$-fixed vectors are called tamely ramified. 
They generate the principal block in the Bernstein decomposition of $\Rep G$~\cite{B76} (more intricately, by the work of Bushnell and Kutzko~\cite{BK93}, the affine Hecke algebra can actually be used to model the whole category $\Rep G$).   
The pro-$p$ subgroup $\Ip$ sees more than the Iwahori subgroup, including a certain class of depth $0$ representations.
Its Hecke algebra~\cite{V05,V16} is the natural object to consider when studying the representation theory of $G$ over a field of positive characteristic leading to basic results in the mod $p$ Langlands programme including classification of irreducibles in terms of irreducible supersingular representations of Levi subgroups~\cite{AHHV}. 

\subsubsection{Whittaker modules}
Let $\psi: U^-\to \CC^\times$ be an additive non-degenerate character of the negative unipotent $U^-$, with conductor $\varpi \cO$.
It is known that every irreducible $\pi \in \Rep G$ has an at most unique Whittaker model, in other words 
\begin{equation}\label{eq:multiplicityone}
	\dim \Hom_G(\pi, \Ind_{U^{-}}^G \psi ) \leq 1.
\end{equation}
Such a local result is fundamental in the theory of automorphic forms, where it can be used to show global multiplicity one results (for example that Fourier coefficients of automorphic forms on $\operatorname{GL}_d$ can be decomposed in terms of Whittaker functions~\cite{S74}), as well as in the study of L-functions, including in the Langlands--Shahidi method and the Rankin--Selberg formula. 

Alternatively, we can use the Gelfand--Graev module $\ind_{U^-}^{G} \psi$ of $G$ (the dual of $\Ind_{U^-}^{G} \psi^{-1}$), to derive information about Whittaker models for representations of $G$.
Restricting to a certain class of representations, we may consider the actions 
\begin{equation}\label{eq:HeckeGGaction}
C_c^\infty(K \backslash {G} / K) \curvearrowright (\ind_{U^-}^{G} \psi)^K, \quad 
\HGI := C_c^\infty(I \backslash {G} / I) \curvearrowright (\ind_{U^-}^{G} \psi)^I. 
\end{equation}
It is known that $(\ind_{U^-}^{G} \psi)^K$ is a rank one free module over $C_c^\infty(K \backslash {G} / K)$ (both of which are isomorphic to $\CC[\Rep G^\vee]$, first as an algebra, second as a module, via the Satake isomorphism and geometric Casselman--Shalika formula, resp.). 
Here, $G^\vee$ is the Langlands dual group, over $\CC$. 
It is also known that $(\ind_{U^-}^{G} \psi)^I$ is isomorphic to the antispherical module of the affine Hecke algebra $\HA\cong\HGI$~\cite{CS18}:
\begin{equation}
	(\ind_{U^-}^{G} \psi)^I \cong \operatorname{asph}:=\Ind_{\cH_\kappa}^{\HA} \operatorname{sgn},
\end{equation}
where $\cH_\kappa$ is the finite Hecke algebra.

\subsubsection{$p$-adic Schur algebras}\label{subsub:padicschur}
The space $(\ind_{U^-}^{G} \psi)^I$ can be seen as a module over the $p$-adic Schur algebra $\SGI:=\End_{C_c^\infty(I \backslash {G} / I)} (\ind_{U^{-}}^G \psi)^I$. 
Moreover, one can relate the space in~\eqref{eq:multiplicityone} to an irreducible representation of $\SGI$.  
The commutativity of $\SGI$ then implies the local multiplicity one property mentioned above.
It therefore makes sense to ask the following problems at the pro-$p$ level: 
\begin{enumerate}[label=($\textsf{P}$\arabic*):]
	\item understand the structure of $\WIp:= (\ind_{U^-}^{G} \psi)^{\Ip}$ as a module over $\HGIp:=C_c^\infty(\Ip \backslash {G} / \Ip)$.
	\item describe this structure in terms of an antispherical module of the $\HGIp$.
	\item understand the structure of the pro-$p$ Schur algebra  $\SGIp:=\End_{\HGIp} (\WIp)$. 
\end{enumerate} 

The action of $\HGIp$ on $\WIp$ is computed by Gao--Gurevich--Karasiewicz~\cite{GGK1}, therefore solving (\textsf{P}1).
Based on their result, we answer (\textsf{P}2)  and (\textsf{P}3) by using the theory of quantum wreath product developed in~\cite{LNX24,LM25,LNX25}. 

\subsubsection{Metaplectic groups}
Another natural appearance of the pro-$p$ Hecke algebra can  be seen when dealing with metaplectic covers of $p$-adic groups. 
Fix a positive integer $n$ and denote by $\mG$ a metaplectic $n$-cover of $G$ (see~\S\ref{sub:metaplecticgroups} for a precise definition; we note here that we are working with Savin's nice cover of $G=\mathbf{GL}_d(F)$). 
Our results are dependent on choosing this cover and do not immediately generalize to other covers (like the Kazhdan--Patterson covers) or other Cartan types cf. results in the same style proved in~\cite{BBB19,GGK2}.

In this setting, the multiplicity one property for Whittaker models fails, i.e.~\eqref{eq:multiplicityone} fails when we replace $G$ by $\mG$. 
As such, understanding the structure of the modules in~\eqref{eq:HeckeGGaction} is a significantly harder problem. 
Hecke algebras and their modules will consist of genuine functions, which coupled with the lack of commutativity of the metaplectic torus reduce the support of the Hecke algebra, while leaving intact the support of the corresponding module.  
For example, in the metaplectic setting the spherical Gelfand--Graev module is not free of rank one over the spherical Hecke algebra and its structure is quite hard to understand $p$-adically (cf. \cite{BP25}).

Gao--Gurevich--Karasiewicz~\cite{GGK1} have realized that the reduction in support of the Hecke algebra characteristic to the spherical and Iwahori levels does not persist at the pro-$p$ level, which allowed them to algebraically compute the action of $\HGbIp:= C_{\iota, c}^\infty(\Ip \backslash {\mG} / \Ip)$ on the metaplectic module $\WbIp:=(\ind_{\mu_n U^-}^{\mG} \iota \otimes \psi)^{\Ip}$ (thereby solving (\textsf{P}1) at the metaplectic level). 
They use an averaging procedure to compute the Iwahori level action.  
Ultimately, the structure at the spherical level is understood in~\cite{BP25} in terms of dual data depending on Lusztig's quantum group at a root of unity.

\subsection{On quantum wreath products and wreath modules}

\subsubsection{Quantum wreath Products}
A \emph{quantum wreath product} (introduced in \cite{LNX24}) is an algebra  $B \wr \cH(d)$
defined by a generalized Bernstein--Lusztig type presentation.
The notion of quantum wreath product allows one to uniformly study representation theory of many important algebras appearing in quantum representation theory such as the affine Hecke algebra, (affine) Yokonuma algebra, affine zigzag algebras, affine Frobenius algebras, etc.  

In group theory, a wreath product combines the ``base group'' and the ``acting group'' into a larger group. 
However, in the context of quantum wreath products, 
one cannot encompass many important examples by taking the wreath product of two ``quantum'' algebras.
What we do is, given a $\CC$-algebra $B$ that is potentially non-commutative, we define an algebra $B \wr \cH(d)$  generated by $B^{\otimes d}$ and formal generators $H_1, \dots, H_{d-1}$, 
subject to the type A braid relations among $H_i$'s, 
quadratic relations for $H_i$'s with coefficients in $B^{\otimes d}$, 
and wreath relations between $H_i$'s and $B^{\otimes d}$,
which is meant to mimic the wreath product between $B^{\otimes d}$ and the finite Hecke algebra. 
But we stress that $\cH(d)$ is not defined as the finite Hecke algebra (in fact it is not defined independently of $B \wr \cH(d)$).
The coefficients appearing in the definition of $B \wr \cH(d)$ are controlled by a choice of parameters $(S,R,\sigma,\rho)$, where $S,R \in B\otimes B$ and $\sigma, \rho \in \End(B\otimes B)$.
The theory becomes substantially more interesting when $S,R \not\in\CC(1\otimes 1)$.

For example, setting $B= \CC[x^{\pm1}]$ to be a polynomial ring produces an algebra $B \wr \cH(d)$ isomorphic to the affine Hecke algebra $\HA$ (or equiv., the Iwahori Hecke algebra $\HGI$), while taking $B= \Fr[x^{\pm1}]$ where $\Fr:=\CC[C_{q-1}]$ is the group algebra over the cyclic group of $q-1$ elements produces the affine Yokonuma algebra $\HAY$ (equiv., the pro-$p$ Hecke algebra $\HGIp$).  
Moreover, one can set $B = \cH_q(\Sigma_m)$ to be the Hecke algebra for the symmetric group $\Sigma_m$ to realize the so-called Hu algebra, a non-trivial quantization of wreath products between symmetric groups. 
Furthermore, the base algebra $B$ does not have to be deformed from a group algebra. The affine zigzag algebra is realized by setting $B$ to be the zigzag algebra over a given quiver.

\subsubsection{Skew polynomial quantum wreath products}
Generalizing results in \cite{LM25}, we develop the theory of skew polynomial quantum wreath products where the base algebra is a ring of skew polynomials. 
In particular, we work with the ring $\bbA := \Fr \rtimes_n \CC[x^{\pm1}]$ with skew commutativity 
\[
x^{\pm1} f = \xi^{\pm2} f x^{\pm 1},
\quad
\textup{ for } f \in \Fr,
\]
for some $n$-th root of unity $\xi$.
Fix $t \in C_{q-1}$ so $\Fr \cong \CC[t]/(t^{q-1}-1)$.
We can then define the quantum wreath product $\bbA\wr\cH(d)$ using the same choice of parameters $S, R$, and $\sigma$ as in the affine Yokonuma algebra $\Fr[x^{\pm1}]\wr\cH(d)$,
while the map $\rho$ is determined by the twisted Leibniz rule $\rho(ab) = \sigma(a)\rho(b) + \rho(a)b$ for all $a,b\in\bbA$ as well as the following ``minuscule'' case:
\[
\rho(x\otimes 1) := \sum_{j=1}^{q-1}(t^j \otimes t^{-j})(x\otimes 1), 
\quad
\rho(\Fr\otimes \Fr) = 0.
\]
We show that the intricate terms in the Bernstein--Lusztig relations \eqref{eq:MpWR} for $\HGbIp$ are captured by the values $\rho(P)$, where $P \in \bbA \otimes \bbA$ is an arbitrary skew polynomial. This formulation provides a more transparent realization of the relations originally established in~\cite{GGK1}.

Our first main result identifies the metaplectic pro-$p$ Hecke algebra in terms of the skew quantum wreath product:
\begin{mainthm}[Theorem \ref{thm:PQWP2}]\label{thmPQWP}
	We have an isomorphism of algebras $\HGbIp \cong \bbA \wr \cH(d).$
\end{mainthm}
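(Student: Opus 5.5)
The plan is to compare presentations. On one side we have the Bernstein--Lusztig type presentation of $\HGbIp$ computed by Gao--Gurevich--Karasiewicz~\cite{GGK1}, with relations \eqref{eq:MpWR}. On the other side is the defining presentation of the skew polynomial quantum wreath product $\bbA\wr\cH(d)$. We construct a homomorphism $\Phi:\bbA\wr\cH(d)\to\HGbIp$ on generators, check that it respects every defining relation, and then prove bijectivity by comparing bases.

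\textbf{Defining $\Phi$.} The $i$-th tensor factor of $\bbA^{\otimes d}$ is sent as follows.
\begin{enumerate}
\item The cyclic generator $t$ goes to the characteristic function of $\Ip\,\redp^{-1}(\mathrm{diag}(1,\dots,\tau,\dots,1))$, where $\tau$ is a generator of $\kappa^\times$ placed in slot $i$. These span the image of the finite torus $T(\kappa)$, which lifts to $\mG$ because Savin's cover splits over $K$.
\item The element $x$ goes to the genuine function supported on $\Ip\,\widetilde{\varpi^{e_i}}\,\Ip$, for a fixed lift of $\varpi^{e_i}$.
\item $H_i$ goes to the normalized characteristic function of $\Ip s_i\Ip$.
\end{enumerate}

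\textbf{Checking relations.} Most relations are routine or already covered by earlier results.
\begin{itemize}
\item The skew commutation $x f=\xi^{2}f x$ follows from the commutator $[\widetilde{\varpi},\widetilde{u}]$ in the metaplectic torus, where $u\in\cO^\times$. This commutator is given by the Hilbert symbol $(\varpi,u)_n$, a power of $\xi$ depending only on $\redp(u)$. The exponent $2$ reflects Savin's choice of cover, and we must check that distinct coordinates commute.
\item The braid relations, the quadratic relations with $S,R$ coefficients in $\Fr\otimes\Fr$, and the wreath relations on $\Fr^{\otimes d}$ are exactly those of the affine Yokonuma algebra. They hold because they hold in the finite Yokonuma algebra inside $\HGbIp$, which is unaffected by the cover.
\item The substantive check is the wreath relation $H_iP=\sigma(P)H_i+\rho(P)$ for all $P\in\bbA\otimes\bbA$. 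By the twisted Leibniz rule, $\rho$ is determined by its values on generators. So it suffices to verify the relation for $P=x\otimes1$, for $1\otimes x$ and its inverses, and for $P\in\Fr\otimes\Fr$. We then need a lemma: the maps $P\mapsto H_iP-\sigma(P)H_i$ in both algebras satisfy the same twisted Leibniz rule. This is immediate once $\sigma$ is known to be multiplicative, and it lets us propagate from generators to all skew polynomials.
\item The minuscule case is the single-variable computation of~\cite{GGK1}: $T_{s}\cdot\widetilde{\varpi^{e_i}}$ expands with the correction term $\sum_j (t^j\otimes t^{-j})x$, where the sum over $j$ comes from decomposing $\Ip s\Ip\varpi^{e_i}\Ip$ by the torus. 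We match it term by term.
\end{itemize}

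\textbf{Bijectivity.} Earlier results of the paper on skew polynomial quantum wreath products (generalizing~\cite{LM25}) should give a spanning set, and ideally a basis, $\{b\,H_w : b \text{ a monomial basis of } \bbA^{\otimes d},\ w\in\Sigma_d\}$. Its image consists of functions supported on distinct double cosets $\Ip\backslash\mG/\Ip$ indexed by $\Wp\times\mu_n$ modulo genuineness. Using the pro-$p$ Iwahori--Bruhat decomposition, $\widetilde{W}^{(1)}=T(\kappa)\ltimes(\ZZ^d\rtimes\Sigma_d)$. The key point, noted in~\cite{GGK1}, is that at pro-$p$ level no support is lost: every double coset supports a genuine function. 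Hence $\Phi$ sends the spanning set bijectively onto a basis of $\HGbIp$ up to triangularity with respect to Bruhat order, which gives both injectivity and surjectivity. If the basis theorem for $\bbA\wr\cH(d)$ is not yet available, the same argument still works: the spanning set suffices, and linear independence of its image forces it to be a basis.

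\textbf{Expected obstacle.} The main difficulty is the precise cocycle bookkeeping in the $\rho$ relation and in the skew commutation. We must show that the signs and Hilbert symbols from lifting $\varpi^{e_i}$ and $s_i$ to $\mG$ produce exactly $\xi^{\pm2}$ and the stated $\rho(x\otimes1)$, with no extra scalar. We would first fix lifts normalized so that $\widetilde{s_i}\,\widetilde{\varpi^{e_i}}\,\widetilde{s_i}^{-1}=\widetilde{\varpi^{e_{i+1}}}$. If an extra scalar appears, we would absorb it by rescaling $\Phi(x)$.
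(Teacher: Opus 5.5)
Your overall architecture (a homomorphism on generators, a check of the defining relations against the presentation in Theorem~\ref{thm:propHgenrel}, then bijectivity by comparing bases) is the paper's. The homomorphism you actually write down, however, is not one.

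\textbf{The gap.} You send $x$ in slot $i$ to the function supported on the single double coset $\Ip\widetilde{\varpi^{\epsilon_i}}\Ip$, and this fails as soon as $\epsilon_i$ is not dominant. Already for $n=1$, $d=2$ you can see it directly. The element $\tau=\left(\begin{smallmatrix}0&1\\ \varpi&0\end{smallmatrix}\right)$ normalizes $\Ip$, and $\tau^2=\varpi\cdot 1$ is central. Since $\varpi^{\epsilon_1}=s\tau$ and $\varpi^{\epsilon_2}=\tau s$, your images of $x_1,x_2$ are $\cT_s\cT_\tau$ and $\cT_\tau\cT_s$. Their products in the two orders are $\cT_{\tau^2}\cT_s^2$ and $\cT_\tau\cT_s^2\cT_\tau$. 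After applying the quadratic relation, the $(q-1)$-terms are supported on $\tau^2 s$ and on $\tau s\tau=\tau^2 s_0$ respectively, where $s_0$ is the affine reflection. So $x_1x_2=x_2x_1$ is not preserved.

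Equivalently, the wreath relation for $b=x_1$ forces $x_2=(H_1-S_1)x_1H_1^{-1}$. With $x_1\mapsto\cT_s\cT_\tau$, this forces $x_2$ to go to a toral multiple of $\cT_\tau\cT_s^{-1}$, not to $\cT_\tau\cT_s$. No rescaling of $\Phi(x)$ repairs this. Normalizing group-level lifts so that $\widetilde{s_i}\,\widetilde{\varpi^{\epsilon_i}}\,\widetilde{s_i}^{-1}=\widetilde{\varpi^{\epsilon_{i+1}}}$ does not help either, because convolution of single double cosets is not length-additive here. So the obstacle you anticipated (cocycle scalars) is not the real one.

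\textbf{The fix.} Send $x_i^{\pm1}\mapsto Y_{\pm\epsilon_i}$, the Bernstein elements $\cT_{\varpi^{\lambda'}}\cT_{\varpi^{\lambda''}}^{-1}$. These are the actual generators of the presentation you said you would compare with, and this is what the paper's $\Psi$ does: $\tp{\gamma_1}\otimes\cdots\otimes\tp{\gamma_d}\mapsto c(\chi^\gamma)$, $x^\lambda\mapsto Y_\lambda$, $H_w\mapsto\cT_w$. With that change your relation check goes through.
\begin{itemize}
\item Commutativity and skew-commutativity are \eqref{eq:MpPoly}--\eqref{eq:MpSkew} together with \eqref{eq:varphilambda}.
\item Your twisted-Leibniz reduction to $b\in\{x_1^{\pm1},x_2^{\pm1}\}\cup\Fr\otimes\Fr$ is a legitimate shortcut. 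The paper instead compares \eqref{eq:MpWR} with Lemma~\ref{lem:SkewDemazureFacts} monomial by monomial.
\item For the quadratic relation, \eqref{eq:MpQR} gives $q\cT_{\bar h_{\alpha_i}(-1)}$. This is $R$ with $k=m/2$, unless you re-choose the lift of $s_i$ as the paper does.
\end{itemize}

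\textbf{Bijectivity.} The images $c(\chi)Y_\lambda\cT_w$ are no longer single-coset functions, so ``distinct double cosets'' must be replaced by the basis statement of Corollary~\ref{cor:propHbasis}. Granting that, your observation is correct: an easily obtained spanning set $\{bH_w\}$ mapping onto a basis suffices. This route even yields Theorem~\ref{thm:SPQWPPBW} as a consequence. The paper instead uses that theorem, proved independently via (P1)--(P9), as an input.
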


Our result allows us to apply the structure and representation theory developed in \cite{LNX24, LM25} to $\HGbIp$. 
In particular, we obtain a PBW basis theorem for $\HGbIp$
(see Theorem \ref{thm:SPQWPPBW}).

When $n=1$, $\bbA \wr \cH(d)$ is known to be isomorphic to the affine Yokonuma algebra $\HAY$ (cf.~\cite{LM25}) and we therefore recover the main result of~\cite{CS16} identifying the latter algebra with the pro-$p$ Hecke algebra.
A PBW basis result for $\HAY$ is obtained in \cite{RS20}.
Representation theoretic results such as a Schur duality for $\HAY$ were developed in~\cite{LM25}.

\subsubsection{Descent to the Iwahori level}
We can write
\begin{equation}\label{eq:corner}    
\HGI \cong e_I \HGIp e_I, \quad \HGbI \cong e_I \HGbIp e_I.
\end{equation}
for a certain idempotent $e_I$ in $\HGIp$ (or $\HGbIp$) that is supported on $I$. 
As mentioned before, $\HGI \cong \CC[x^{\pm1}]\wr\cH(d).$
The metaplectic Hecke algebra $\HGbI := C^\infty_{\iota, c}(I \backslash {\mG} / I)$ is known to be isomorphic to the non-metaplectic Hecke algebra $\HGI$ (for the cover we are working with) cf. Savin's local Shimura correspondence~\cite{S88,McN12}. 

We show in Proposition \ref{thm:eHe} that $\CC[x^{\pm n}]\wr\cH(d)$, 
can be realized as a corner algebra $\epsilon_I (\bbA\wr\cH(d)) \epsilon_I$ using the same idempotent $\epsilon_I \in \Fr^{\otimes d}$ as in the linear case.
We obtain a quantum wreath product interpretation of the local Shimura correspondence, i.e. we show the existence of an isomorphism:
\[
\epsilon_I (\Fr[x^{\pm1}] \wr \cH(d)) \epsilon_I
\to
\epsilon_I (\bbA \wr \cH(d)) \epsilon_I
\quad \textup{which maps} \quad
\epsilon_I x^{\lambda}  \epsilon_I
\mapsto
\epsilon_I x^{\bar{n}\lambda}  \epsilon_I,
\]
where $\bar{n}:= n/\textup{gcd}(n,2)$.
In fact, our result clarifies that the analogous map $\Fr[x^{\pm1}]\wr \cH(d) \to \bbA \wr\cH(d)$ at the pro-$p$ level will no longer be an isomorphism 
since the centers of the two algebras (depending on  
$\Fr \rtimes_n \CC[x^{\pm1}]$ for $n\geq2$ and $\Fr[x]$) will be different. This gives us:
\begin{maincor}
	For $n \geq 2$, the pro-$p$ Hecke algebras $\HGbIp \not\cong \HGIp$ are non-isomorphic.
\end{maincor}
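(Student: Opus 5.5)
Via Theorem \ref{thmPQWP} and the known isomorphism $\HGIp \cong \HAY \cong \Fr[x^{\pm1}]\wr\cH(d)$ (the case $n=1$, cf.~\cite{CS16,LM25}), the corollary reduces to showing $\bbA\wr\cH(d)\not\cong \Fr[x^{\pm1}]\wr\cH(d)$ for $n\ge 2$. Since the identification with $p$-adic objects uses only $d$, $q$ and $n$, the two algebras are determined by these data. I will distinguish them by an isomorphism invariant, namely the center, or more robustly the rank of each algebra as a module over its center.

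First I compute the center of $\Fr[x^{\pm1}]\wr\cH(d)$. This affine Yokonuma algebra is known to have center generated by $\Sigma_d$-symmetric elements of $\Fr^{\otimes d}[x_1^{\pm1},\dots,x_d^{\pm1}]$ (symmetric under the permutation action twisted appropriately on the $t_i$'s, i.e.\ the invariants of $(C_{q-1}\times\ZZ)^d\rtimes\Sigma_d$ type). Equivalently, by the PBW basis (Theorem \ref{thm:SPQWPPBW}) it is a free module of rank $d!\cdot|{\text{stuff}}|$ over a Laurent polynomial subalgebra, and the rank over its center is $(d!)^2$ times a factor coming from $\Fr^{\otimes d}$. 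Next I do the same for $\bbA\wr\cH(d)$. Here the key point is that $x_i$ no longer commutes with $\Fr^{\otimes d}$: $x_i t_i = \xi^{2} t_i x_i$. So a central element's $x$-part must involve only powers $x_i^{m}$ with $\xi^{2m}=1$, i.e.\ $\barn \mid m$. The center of the base $\bbA^{\otimes d}$ is therefore the subring generated by $\Fr$-parts commuting appropriately together with $x_i^{\pm\barn}$ (up to invariance). After symmetrizing via the PBW basis, the center of $\bbA\wr\cH(d)$ is a Laurent-type algebra in $x_i^{\pm\barn}$, and the algebra has strictly larger rank over its center, by a factor of $\barn^{2d}$ or similar, than in the linear case.

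The steps are as follows. (1) Use the PBW basis to identify $\bbA^{\otimes d}$ as a subalgebra and show that central elements lie in $\bbA^{\otimes d}$, by a leading-term argument with respect to the length of $w\in\Sigma_d$, as in the standard Bernstein argument. (2) Determine the invariants in $\bbA^{\otimes d}$ commuting with all $H_i$ and all of $\bbA^{\otimes d}$. (3) Compare the resulting invariants. These are the Gelfand–Kirillov dimension, which is equal ($d$) in both cases, and the rank over the center. Alternatively, and more simply, compare the centers of the two algebras as algebras: the dimension of the fiber of the center over a generic point, or the PI degree (the maximal dimension of irreducible modules). In the linear case irreducible modules have dimension at most $d!\cdot$(bounded), while in the skew case the quantum-torus part $x t = \xi^2 t x$ forces generic irreducibles of dimension multiplied by $\barn^{d}$. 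The PI degree is an isomorphism invariant, so this finishes the argument when $\barn\ge 2$.

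The main obstacle is the case $n=2$, where $\barn=1$ and $\xi^2=1$, so the skew commutation is trivial. There I expect the difference to come only from $\rho$ and $\sigma$. I would first check whether $\bbA\wr\cH(d)$ with $\xi^2=1$ genuinely differs, for instance through the twisted term in $\rho$. If it does not, the statement needs $\barn\ge2$, i.e.\ the center comparison should be stated with $\xi^2\ne1$. Establishing the center computation in step (1) rigorously for the skew case is the main technical step; I would adapt the proof of the center theorem for $\Fr[x^{\pm1}]\wr\cH(d)$ in \cite{LM25}.
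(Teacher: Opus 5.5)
Your reduction, and your use of the center as the distinguishing invariant, are the same as the paper's. The paper justifies the corollary only by remarking that the centers of $\bbA\wr\cH(d)$ and $\Fr[x^{\pm1}]\wr\cH(d)$ differ. Your worry about $n=2$ is well founded, and you should state it as a conclusion rather than a possibility: in the paper's setting the statement is false for $n=2$.

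\textbf{The case $n=2$.} Here $\xi=-1$, so $\psi=\mathrm{id}$ and $\bbA=\Fr[x^{\pm1}]$ is commutative. The data $\sigma$, $S$ and $R$ are literally the same on both sides. The map $\rho$ is singled out by $\rho(x_1)=Sx_1$, $\rho(\Fr\otimes\Fr)=0$ and the twisted Leibniz rule, and these three properties determine it uniquely. The twisted Demazure operator of Proposition~\ref{DemazureFacts} has all three: the Leibniz rule is immediate from its formula in a commutative ring, and $\rho(\Fr\otimes\Fr)=0$ follows from part (b). So the two quantum wreath products have identical presentations, and Theorem~\ref{thm:PQWP2} at $n=2$ and at $n=1$ gives $\HGbIp\cong\HGIp$.

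This agrees with the $p$-adic picture. For Savin's cover $\Bs$ takes only even values, so when $n=2$ the metaplectic torus is commutative and every $\varphi(\lambda)$ is trivial. Nothing in $\rho$ or $\sigma$ rescues this case. The correct hypothesis is $\barn\ge 2$, i.e.\ $n\ge 3$. The paper's center argument has the same blind spot, since the two centers coincide when $n=2$.

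\textbf{The case $\barn\ge2$.} Your invariants still need to be made precise.
\begin{itemize}
\item ``Rank over the center'' is not well defined as stated, because both centers contain many idempotents.
\item Comparing ``at most $d!\cdot$(bounded)'' with ``multiplied by $\barn^{d}$'' gives no inequality. You would first have to prove that the linear PI degree is $d!$ and construct larger irreducibles on the skew side.
\end{itemize}

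Your steps (1)--(2) do go through, and they already give a sharp invariant.
\begin{itemize}
\item Set $L=\CC[x_1^{\pm\barn},\dots,x_d^{\pm\barn}]$. It is central in $\bbA^{\otimes d}$, and $\bbA^{\otimes d}$ is free over $L$.
\item Hence the top coefficient $b_w(w(P)-P)$ of $zP-Pz$ forces a central element $z$ into $\bbA^{\otimes d}$. It then lies in $(Z(\bbA)^{\otimes d})^{\Sigma_d}$, where $Z(\bbA)=\CC[t^{\barn}]\otimes\CC[x^{\pm\barn}]$.
\item The $\Sigma_d$-orbit sums of primitive idempotents of $\CC[t^{\barn}]^{\otimes d}$ are central in $\bbA\wr\cH(d)$: they are $\psi$-invariant, symmetric, and killed by $\rho_i$.
\item These orbit sums are all the primitive idempotents of $(Z(\bbA)^{\otimes d})^{\Sigma_d}$, because $\CC[x^{\pm\barn}]^{\otimes d}$ has no nontrivial idempotents.
\item So the center has exactly $\binom{m/\barn+d-1}{d}$ primitive idempotents. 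The same argument with $\barn=1$ gives $\binom{m+d-1}{d}$ for the affine Yokonuma algebra, and these differ when $\barn\ge2$.
\end{itemize}

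A shorter route avoids centers altogether. In any finite-dimensional module $V$ over $\bbA\wr\cH(d)$, we have $x_1t_1x_1^{-1}=\xi^2t_1$ with both operators invertible. Taking determinants gives $\xi^{2\dim V}=1$, so $\barn\mid\dim V$. By contrast, $\Fr[x^{\pm1}]\wr\cH(d)$ has the character $t_i\mapsto 1$, $H_i\mapsto q$, $x_i\mapsto q^{d-i}$. On the $p$-adic side, the $\Ip$-fixed line of the trivial representation of $G$ is a one-dimensional $\HGIp$-module. This argument also shows directly why $\barn\ge 2$ is exactly the right condition.
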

This result highlights the \emph{failure} of the local Shimura correspondence at the pro-$p$ level.

\subsubsection{Wreath modules}

Since there is no ``acting algebra'' $\cH(d)$ in a quantum wreath product, one cannot combine a $B$-module and an $\cH(d)$-module (which is not defined) into a $B\wr \cH(d)$-module.
A crucial result in \cite{LNX25} is the introduction of the notion of the wreath modules.
It turns out that one has to start with a certain $B$-module on which both $S$ and $R$ act on $M \otimes M$ by scalars (see \S\ref{sub:wreathmod} for details) and derives an auxiliary {\em twisted Hecke algebra} $\tcH(M)$ using these scalars. 
Subsequently, for any module $N$ over $\tcH(M)$, a well-defined $B\wr\cH(d)$-action can be realized on the wreath module $M \wr N := M^{\otimes d} \otimes N$.

This theory provides a transparent realization of a diverse family of modules in existing literature.
For example, the spherical, antispherical and Kashiwara--Miwa--Stern~\cite[(32)]{KMS95} tensor modules for the affine Hecke algebra can all be realized as wreath modules. 
Moreover, simple and Specht modules for Ariki--Koike and Hu algebras are constructed via parabolic induction of wreath modules.

\subsubsection{Wreath modules for skew polynomial quantum wreath products}
One significance of employing the theory of quantum wreath products is that we can now define a novel ``antispherical module'' for the quantum wreath product $\bbA\wr \cH(d)$. 

There is a notable technicality which arises from the fact that the quadratic relation in $\bbA\wr \cH(d)$ has a complicated degree one coefficient. 
This is a difficulty specific to the pro-$p$ level (it does not appear at the Iwahori level) and appears even in the non-metaplectic setting.
Let $t$ be a generator of $C_{q-1}$. 
The quadratic relations in $\bbA\wr \cH(2)$ are of the form
\begin{equation}\label{eq:quadratic:intro}
H^2 = (q-1)e H + q(t^k\otimes t^k),
\quad
\textup{where}
\quad
e := \frac{1}{m}\sum_{j=1}^m t^j \otimes t^{-j},
\quad 
k \in \{0, m/2\}.
\end{equation}
It is a pleasant surprise (see Lemma \ref{Qsplits2}) that such a quadratic relation splits into linear factors, i.e., $(H+\al)(H - \alb) = (H - \alb)(H+\al)=0$ for some $\al, \alb \in \Fr^{\otimes d}$.
To be precise, we obtain that 
\begin{equation}\label{eq:eigenvalues:intro}
\al = (\sqrt{q} (t^{k}\otimes 1) + 1)e - \sqrt{q}(t^{k}\otimes 1),
\quad
\alb = (\sqrt{q}(1\otimes t^{k}) + q)e - \sqrt{q}(1\otimes t^{k}).
\end{equation}
The splitting of~\eqref{eq:quadratic:intro} enables us to define the ``trivial'' and ``sign'' modules on which $H$ acts by $\alb$ and $-\al$, respectively.
We are then able to construct a generalization of the KMS space $\bbV(N)^{\otimes d}$ (see \cref{KMSaction}) for any $N>1$, and then show: 
\begin{mainthm}[Theorem \ref{thm:VGG2}]\label{thmKMS}
	The pro-$p$ component $\VGGb^{\Ip}$ of the metaplectic Gelfand--Graev module is isomorphic to the space $\bbV(1) \wr \sgn \cong  \bbV(1)^{\otimes d}$ 
	as right modules over $\HGbIp \cong \bbA\wr \cH(d)$.
\end{mainthm}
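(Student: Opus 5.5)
The plan is to compare explicit bases and explicit actions on both sides. On the $p$-adic side, I take the description of $\VGGb^{\Ip}$ from \cite{GGK1}. There $\VGGb^{\Ip}$ has a basis of functions $w_{\lambda,\tau}$ supported on the double cosets $\mu_n U^- \varpi^\lambda \tau \Ip$, indexed by $\lambda\in\ZZ^d$ and $\tau\in T(\kappa)\cong C_{q-1}^d$. \cite{GGK1} also gives explicit formulas for the right action of the generators of $\HGbIp$: torus elements, $\varpi^\mu$, and the simple reflections $T_{s_i}$.

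On the wreath side, I use the construction of the wreath module $\bbV(1)\wr\sgn$ from \S\ref{sub:wreathmod}, as specialized in \cref{KMSaction}. Its underlying space is $\bbV(1)^{\otimes d}\otimes\sgn\cong \bbV(1)^{\otimes d}$, where $\bbV(1)$ is the $\bbA$-module with basis indexed by $\ZZ\times C_{q-1}$. Here $\Fr$ acts through the $C_{q-1}$-label, and $x$ shifts the $\ZZ$-label with the twist $\xi^{\pm 2}$ forced by the skew commutation. Via Theorem~\ref{thmPQWP}, i.e.\ \cref{thm:PQWP2}, I transport this action to $\HGbIp$. The candidate isomorphism is $w_{\lambda,\tau}\mapsto v_{\lambda_1,\tau_1}\otimes\cdots\otimes v_{\lambda_d,\tau_d}$, possibly rescaled by a normalizing factor. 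The factor would be a Hilbert-symbol/cocycle value $\xi^{c(\lambda,\tau)}$ together with a power of $\sqrt q$. I will fix it by matching the action of $\bbA^{\otimes d}$ first.

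The verification has three steps. First, I check that the map is a linear bijection; this is clear since both bases are indexed by $\ZZ^d\times C_{q-1}^d$. Second, I check compatibility with $\bbA^{\otimes d}$. The $\Fr^{\otimes d}$-action is by translation of $\tau$ on both sides. For the $x_i^{\pm1}$, the metaplectic cocycle on the torus produces exactly the roots of unity $\xi^{\pm2}$, and this is what determines $c(\lambda,\tau)$. Third, I check the action of each $H_i$. Since both sides are compatible with the $\bbA^{\otimes d}$-action, and the wreath relations together with the Bernstein--Lusztig relations (given by $\rho$) propagate the action, it suffices to check $H_i$ on a generating set of vectors. Concretely, I check it on the vectors with $\lambda$ near the ``dominant/minuscule'' region, e.g. $\lambda_i-\lambda_{i+1}\in\{0,1\}$.

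On these vectors, the $\sgn$ factor means that $H_i$ acts on $\bbV(1)^{\otimes d}$ through the wreath formula. That formula involves $-\al$ on the diagonal part and the scalars coming from $S$ and $R$ on $M\otimes M$. Using Lemma~\ref{Qsplits2} and the explicit $\al$ from \eqref{eq:eigenvalues:intro}, I compare it with the GGK formula for $T_{s_i}$ on $w_{\lambda,\tau}$. The case $\lambda_i=\lambda_{i+1}$ matches the antispherical behaviour, where $T_{s_i}$ acts by the sign eigenvalue twisted by $t^k$. The case $\lambda_i\neq\lambda_{i+1}$ matches the KMS-type permutation with Gauss-sum coefficients, which appear through the idempotent $e$.

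The main obstacle is this third step. The GGK formulas involve Gauss sums and Hilbert symbols depending on $\lambda_i-\lambda_{i+1}$ modulo $n$ and on $\tau_i\tau_{i+1}^{-1}$. I must show these agree exactly with the coefficients produced by $\rho(x\otimes 1)=\sum_j (t^j\otimes t^{-j})(x\otimes1)$ and the twisted Leibniz rule. I would first reduce to $d=2$, since all relations are local in $i$. Then I would decompose according to the $\Fr\otimes\Fr$-isotypic components using the idempotents $e$ and $1-e$. On the $1-e$ part, the quadratic relation is simple and the match should be direct. On the $e$ part, the Gauss-sum identity $|g|^2=q$ is exactly what makes $\sqrt q$ appear in $\al$.
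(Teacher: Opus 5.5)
There is a genuine gap, and it lies in the candidate map itself rather than in the bookkeeping. Take $d=2$ and $\lambda=0$. You let $\Fr^{\otimes 2}$ act by translation of $\tau$ on both sides, so a map $w_{0,\tau}\mapsto c_\tau\, v_{0,\tau}$ is $\Fr^{\otimes 2}$-linear only if $c_\tau=c$ is constant. Identifying $c^{\bg}(\chi)$ with $w_{0,1}c(\chi)$ as in $\Vk\cong\CC[T_\kappa]$, your map therefore sends $c^{\bg}(\chi^{(a_1,a_2)})$ to $c\,v^+(\tp{a_1}\otimes\tp{a_2})$.

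Now compare the two $H_1$-actions. On the wreath side, $e(\tp{i}\otimes\tp{j})=\delta_{i,j}(\tp{i}\otimes\tp{j})$ gives $v^+(\tp{a_1}\otimes\tp{a_2})\cdot H_1=-v^+\al(\tp{a_2}\otimes\tp{a_1})$. This is $-v^+(\tp{a_1}\otimes\tp{a_1})$ if $a_1=a_2$, and $\sqrt q\,v^+(\tp{a_2}\otimes\tp{a_1})$ if $a_1\neq a_2$. On the $p$-adic side, \eqref{eq:action-finite-Vk-2} gives $c^{\bg}(\chi)\cT_{\alpha_1}=\bg_{a_1-a_2}\,c^{\bg}(s_1\chi)$. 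So your map intertwines $H_1$ only if $\bg_k=\sqrt q$ for all $k\not\equiv 0$, which is false; as the paper stresses, $\bg_k\neq\pm\sqrt q$ in general. Allowing a shift $\tau\mapsto\tau\tau_0$ or Hilbert-symbol factors only inserts $m$-th roots of unity, which does not repair this in general. Hence the step where you ``show these agree exactly'' cannot be carried out for a map of your form.

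Your localisation of the difficulty is also reversed. On the $e$-component the match is the trivial one, $\bg_0=-1$ against $-\al e=-e$. The Gauss sums that must be reconciled with $\sqrt q$ sit on the $(1-e)$-component. All of this already happens at $\lambda=0$, not in the cases $\lambda_i\neq\lambda_{i+1}$.

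The missing idea is to absorb the Gauss sums by a rescaling in the character basis; this rescaling is not monomial in the double-coset basis, and it is what the paper does. Replace $c^{\bg}(\chi)$ by $c^{\bq}(\chi)$, i.e.\ multiply by $\sqrt q/\bg_{a_1-a_2}$ when $a_1>a_2$. For $d\geq 3$, take the product of $\sqrt q/\bg_{a_i-a_j}$ over all pairs $i<j$ with $a_i>a_j$, so that the normalisation is compatible with every $s_i$. Then $c^{\bq}(\chi)\cT_{\alpha_i}$ equals $-c^{\bq}(s_i\chi)$ or $\sqrt q\,c^{\bq}(s_i\chi)$. The consistency of the rescaling rests exactly on $\bg_k\bg_{-k}=q$ and $\bg_0=-1$ from \eqref{eq:gauss-sums-properties}. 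The assignment $v^+(\tp{a_1}\otimes\cdots\otimes\tp{a_d})\mapsto c^{\bq}(\chi^a)$ is then an isomorphism from $\sgn$ to $\Vk$ over $\Fr\wr\cH(d)\cong\HGbIpk$. In your terms, it matches $v^+$ with $w_{0,1}f$, where $f\in\Fr^{\otimes d}$ is not a multiple of a group element.

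After that, no comparison of Bernstein coefficients or Hilbert symbols is needed. By \cref{prop:affineWIpinduced}, $\WbIp\cong\Vk\otimes_{\HGbIpk}\HGbIp$, and $\bbV(1)\wr\sgn\cong\sgn\otimes_{\bH_Y}\bH$. So the finite isomorphism induces the affine one via \cref{thm:PQWP2}. Your propagation argument in the third step, applied to $\lambda=0$ alone, is exactly this reduction.
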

Here $\bbV(1) \wr \sgn$ is the new antispherical module of $ \bbA\wr \cH(d)$, generalizing the antispherical module of $\HA$ and giving us a solution to (\textsf{P}2) at the metaplectic level.
The $ \bbA\wr \cH(d)$ module $\bbV(N)\wr\sgn$ can then be interpreted as a generalization of the KMS tensor module~\cite[(32)]{KMS95}.
We stress that this result is new and non-trivial even in the linear case (i.e. when we work with $\operatorname{GL}_d$ instead of its cover).

Combining the corner algebra realization \eqref{eq:corner},
we also show that, as an $\HGbI$-module, $\WbI \cong \bbV(1)^{\otimes d}\epsilon_I$ is isomorphic to the original KMS tensor space $V(\bar{n})^{\otimes d}$ (see Proposition \ref{prop:KMSIwa}). 

\subsubsection{Pro-$p$ metaplectic Schur algebras}
We are now in a position to study the wreath Schur algebras
$\bbS(N,d):=\End_{\bbA\wr\cH(d)}(\bbV(N)^{\otimes d})$
at the pro-$p$ metaplectic level.
\begin{mainthm}
    \begin{enua}
\item (\cref{prop:mult2}) For $N \geq 1$, we obtain explicit multiplication formulas of $\bbS(N,d)$ with respect to the Dipper-James-type basis introduced in \cite{LM25}.
\item (Corollary \ref{cor:Schuriden}) For $N=1$, we establish an algebra isomorphism 
\begin{equation}\label{eq:isoSalgebra}
\bbS(1,d) \cong \SGbIp:=\End_{\HGbIp}(\WbIp),
\end{equation}
which solves (\textsf{P}3) at the metaplectic level.
    \end{enua}
\end{mainthm}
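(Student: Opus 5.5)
For part (a), I will start from the Dipper--James-type basis of $\bbS(N,d)=\End_{\bbA\wr\cH(d)}(\bbV(N)^{\otimes d})$ from \cite{LM25}. The plan is to first decompose $\bbV(N)^{\otimes d}$ as a direct sum of ``permutation-type'' modules. Each weight vector of $\bbV(N)^{\otimes d}$ generates a cyclic submodule induced from a parabolic-type subalgebra, on which the $H_i$ act through the sign eigenvalue $-\al$ from Lemma~\ref{Qsplits2}. This should give
\[
\bbV(N)^{\otimes d}\cong\bigoplus_{\lambda} x_\lambda^{\sgn}\,(\bbA\wr\cH(d)),
\]
where $\lambda$ runs over compositions, together with $\Fr$-weight data and $x^{\pm1}$-shift data. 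The basis elements $\phi_{\lambda\mu}^{g}$ are then indexed by double coset representatives $g$ of the skew polynomial Weyl-type group.

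The multiplication formulas come from composing $\phi_{\lambda\mu}^{g}\phi_{\mu\nu}^{h}$ and evaluating on the generator $x_\nu^{\sgn}$. This reduces to computing products of the form
\[
x_\lambda^{\sgn}\,T_g\,x_\mu^{\sgn}\,T_h
\]
in $\bbA\wr\cH(d)$. I would carry this out in two steps:
\begin{enumerate}[label=(\roman*)]
\item first for generators, i.e.\ $\mu$ differing from $\lambda$ by moving one box, or $g$ a shift by $x^{\pm1}$;
\item then for general elements, using the PBW basis Theorem~\ref{thm:SPQWPPBW}, generation of $\bbS(N,d)$ by such Chevalley-type elements, and associativity.
\end{enumerate}
The key input in (i) is the wreath relations. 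Moving $x^{\pm1}$ past $H_i$ produces the terms $\rho(P)$. Moving $x^{\pm1}$ past $\Fr$-elements produces the $\xi^{\pm2}$ twists. These combine with the splitting $(H+\al)(H-\alb)=0$ to turn sums over right cosets into explicit coefficients, which are quantum-binomial-type expressions with $\Fr^{\otimes d}$-valued entries.

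For part (b), I would invoke Theorem~\ref{thm:VGG2}, which gives $\WbIp\cong\bbV(1)\wr\sgn\cong\bbV(1)^{\otimes d}$ as right modules under the isomorphism $\HGbIp\cong\bbA\wr\cH(d)$ of Theorem~\ref{thm:PQWP2}. Transporting endomorphisms along a module isomorphism twisted by an algebra isomorphism gives
\[
\End_{\HGbIp}(\WbIp)\cong\End_{\bbA\wr\cH(d)}(\bbV(1)^{\otimes d})=\bbS(1,d),
\]
so part (b) is formal once these two theorems are available. As a consistency check, I would compare with the explicit basis from part (a) at $N=1$.

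The main obstacle is step (i) of part (a): controlling the degree-one coefficient $(q-1)eH$ in the quadratic relation. In particular, I need the idempotent $e$ and the $\rho$-terms to commute correctly through $x_\mu^{\sgn}$, so that the result is again a combination of the $x_\lambda^{\sgn}T_g$. I would try to reduce this to an orthogonal decomposition by the idempotents of $\Fr^{\otimes d}$. On each block $\epsilon$ the relation becomes either the ordinary Hecke quadratic relation or $H^2=q\cdot(\text{unit})$. The formulas could then be verified block by block, mirroring \cite{LM25} for the case $n=1$.
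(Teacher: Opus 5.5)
Your part (b) is the paper's argument: Corollary~\ref{cor:Schuriden} comes from transporting endomorphism rings along the module isomorphism of Theorem~\ref{thm:VGG2}, twisted by the algebra isomorphism of Theorem~\ref{thm:PQWP2}.

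Part (a) has a genuine gap. Its real content, Proposition~\ref{prop:mult2}, is that the Dipper--James-type maps form a basis of $\bbS(N,d)$, and you only assert this.

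The paper proves it by rerunning the argument of \cite{LM25} (Lemma~6.2, Propositions~6.3, 7.4--7.5). The only inputs are the PBW theorem (Theorem~\ref{thm:SPQWPPBW}) and the \emph{global} splitting $(H_i+\al_i)(H_i-\alb_i)=0$ with $\al_i,\alb_i\in\Fr^{\otimes d}$ (Lemma~\ref{Qsplits2}). Take $y_\lambda$ as in \eqref{def:yl}, which plays the role of your $x^{\sgn}_\lambda$. The argument then runs as follows:
\begin{enumerate}[label=(\arabic*)]
\item $y_\lambda H_i=y_\lambda\alb_i$ for $s_i\in\Sigma_\lambda$;
\item hence there is a normal form $y_\lambda h=\sum_{\eta\in{}^\lambda\Sigma}y_\lambda H_\eta b_\eta$;
\item hence $y_\lambda\bH\cong M^\lambda$ as in \eqref{eq:yHM2};
\item hence $\{y_\lambda H_gPy_\mu^{\delta}\}$ is a basis of $y_\lambda\bH\cap\bH y_\mu$, with $g\in{}^\lambda\Sigma^\mu$ and $P$ running over a basis of $(\bbA^{\otimes d})^{\Sigma_{\delta}}$.
\end{enumerate}
Your indexing by double cosets of $(C_m\times\ZZ)\wr\Sigma_d$ is equivalent, since monomial bases of $\Sigma_\delta$-invariants are indexed by $\Sigma_\delta$-orbits. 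But that equivalence is only available after this argument. Also, the summands of $\bbV(N)^{\otimes d}$ are indexed by $\lambda\in\Lambda_{N,d}$ alone: $\Fr$-weights and $x$-shifts of $v_\lambda$ stay in the cyclic module $M^\lambda$.

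Once the basis is in place, the multiplication rule needs nothing further: $\theta_{A',P'}\theta_{A,P}=\sum_\eta\theta_{(\lambda,\eta,\nu),P_\eta}c_\eta$. The coefficients are read off from the normal form of $y_\lambda H_wP'y_\mu^{\delta(A')}H_gPy_\nu^{\delta(A)}$. No closed form is claimed, and the paper stresses that none is to be expected even for finite $q$-Schur algebras.

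Your steps (i)--(ii) would not deliver this, for two reasons.

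First, step (ii) needs $\bbS(N,d)$ to be generated by one-box moves and $x^{\pm1}$-shifts. Nothing in the paper provides this, and it is false in general. For $N=1$, the very case used in (b), there are no box moves at all. Already for $m=1$, $\bbS(1,d)$ is the ring of symmetric Laurent polynomials in $d$ variables (the centre of the affine Hecke algebra), of Krull dimension $d$. The two elements attached to the shifts $x^{\pm1}$ cannot generate it once $d\ge 3$. Similarly, for $m=1$ and $N\le d$, the Chevalley generators of $U_{\sqrt q}(\widehat{\mathfrak{sl}}_N)$ are known to generate only a proper subalgebra of the affine $q$-Schur algebra.

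Second, the blockwise reduction fails as formulated. The idempotents $c(\chi)$ of $\Fr^{\otimes d}$ are not central: $H_ic(\chi)=c(s_i\chi)H_i$, and $x_j$ shifts the $j$-th entry of $\chi$ by Lemma~\ref{lem:mpSkew}. So only the single relation $H_i^2c(\chi)=(q-1)\delta_{\chi_i,\chi_{i+1}}H_ic(\chi)+R_ic(\chi)$ splits into your two cases. The genuinely central idempotents are sums over orbits, and reducing those blocks to Hecke algebras would need a Morita-type argument you have not supplied.

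Replace (i)--(ii) by the basis argument above, and the reduction formula follows immediately.
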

Descending to the Iwahori level, we can use the fact that 
$\WbI \cong V(\bar{n})^{\otimes d}$ to write the Iwahori--Schur algebra $\SGbI:= \End_{\HGbI}(\WbI)$ as the quantum affine Schur algebra $\widehat{S}_{\sqrt{q}}(\bar{n},d) \cong \End_{\cH_{\operatorname{aff}}} (V(\bar{n})^{\otimes d})$ \cite{G99}.
This recovers one of the main results of~\cite[\S4]{GGK2} and was an important motivation for our work. 

The Schur algebra $\SGbIp$, even in the non-metaplectic setting, is more complicated; it is not commutative, and contains a lot more information than before.
For example, it is slightly more complicated than the Iwahori--Schur algebra $\SGbI \cong \widehat{S}_{\sqrt{q}}(\bar{n},d)$ for a \emph{metaplectic group} cf. our main results.

The structure and representation theory of the quantum affine Schur algebra developed into an elegant mathematical framework with many applications to the study of $p$-adic groups~\cite{V03, MS19, G22, GGK2}, including to the study of Whittaker models as mentioned in \S\ref{subsub:padicschur}. 
We hope this paper is the first step in understanding similar results for the newly introduced pro-$p$ Schur algebras. 
For example, we expect that one can use the theory of quantum wreath products to show that the multiplicative coefficients of these pro-$p$ Schur algebras $\bbS(N,d)$ stabilize as $d \to \infty$, just as for the family of quantum affine Schur algebras.
Such a property is the key to the close connection to the theory of quantum affine algebras.

\subsection*{Acknowledgements} 
We would like to thank Vincent S\'echerre for explaining the work~\cite{CS16}, Edmund Karasiewicz for explaining the work~\cite{GGK1}, Nadya Gurevich, Edmund Karasiewicz, Alexandre Minets and Travis Scrimshaw for general discussions. 

Research of the first author was supported by the Basic Science Research Institute Fund, National Research Foundation of Korea (NRF) grant 2021R1A6A1A10042944 and a HKUST-POSTECH Joint Research Seed Grant.
Research of the second author was supported in part by
NSTC grants 114-2628-M-001-001 and the National Center of Theoretical Sciences.

\section{Polynomial Quantum Wreath Products}\label{sec:PQWP}
In this section, we recall the theory of polynomial quantum wreath products developed in \cite{LM25}.
We explicitly identify the affine Yokonuma algebra and its generalizations with polynomial quantum wreath products.
We introduce generalizations of the trivial and sign representations for the finite Yokonuma algebra which allows us to define for the affine Yokonuma algebra the (anti)spherical module and the KMS tensor module which generalizes~\cite[(32)]{KMS95}.
Ultimately, these will be identified with the Gelfand--Graev module of a $p$-adic group. 
We define and study the corresponding Schur algebra.

\subsection{Polynomial quantum wreath products}
Let $m = q-1 \in \ZZ_{\geq 1}$,
and let $C_m$ be the cyclic group of order $m$ generated by $t$.
Define $[m]:=\{0,1,\cdots, m-1\}$.
Let
\begin{equation}\label{eq:def:FB}
\Fr := \bbC[C_m] = \tfrac{\bbC[t]}{(t^m-1)} = \textup{Span}_\CC\{t^j ~|~ j \in [m]\},
\qquad
\bbB :=  \Fr[x^{\pm1}].
\end{equation}
For any $b\in \bbB^{\otimes k}$ for $k\in\{1,2\}$ and $\varphi \in \End_{\CC}(\bbB^{\otimes 2})$, we define $b_i\in \bbB^{\otimes d}$ and $\varphi_i \in \End_{\CC}(\bbB^{\otimes d})$ by
\begin{equation} \label{def:subscriptshorthand}
\begin{split}
&b_i := 1^{\otimes i-1} \otimes b \otimes 1^{\otimes d-i-k+1} ,
\\
&\varphi_i(b_1 \otimes \dots \otimes b_d) := b_1 \otimes \dots \otimes b_{i-1}\otimes \varphi(b_i\otimes b_{i+1}) \otimes \dots \otimes b_d.
\end{split}
\end{equation}
In particular, $\bbB^{\otimes d} \cong \Fr^{\otimes d}[x_1, \dots, x_d]$.
Let $e := \frac{1}{m} \sum_{j=1}^m t^j \otimes t^{-j}  \in \Fr\otimes \Fr$. Thus, for $1 \leq i \leq d-1$: 
\begin{equation}\label{def:e}
e_i: = \frac{1}{m} \sum\nolimits_{j=1}^m (1^{\otimes i-1}\otimes t^j \otimes t^{-j} \otimes 1^{\otimes d-i-1}) 
= \frac{1}{m} \sum\nolimits_{j=1}^m t_i^j t_{i+1}^{-j} 
\in \Fr^{\otimes d}.
\end{equation}
Let $\sigma = \textup{flip} \in \End_\bbC(\bbB^{\otimes 2})$ be the flip map $b_1\otimes b_2 \mapsto b_2 \otimes b_1$,
and let $\partial\in \End_\bbC(\bbB^{\otimes 2})$ be the Demazure operator given by 
$\partial( b ) := \tfrac{b - \sigma(b)}{x_1 - x_2}$.
In particular, $\partial_i( b ) := \tfrac{b - \sigma_i(b)}{x_i - x_{i+1}}$.
\begin{prop}\label{DemazureFacts}
Let $S_i := (q-1) e_i = \sum_{j=1}^m t_i^j t_{i+1}^{-j}\in \Fr^{\otimes d} \subseteq \bbB^{\otimes d}$.
\begin{enua}
\item Each element $e_i \in \Fr^{\otimes d}$ is an idempotent, i.e., $e_i^2 = e_i$.
\item $S_i f = \sigma_i(f) S_i$ for all $f \in \Fr^{\otimes d}$.
\item The following twisted Demazure operator $\rho_i \in \End_\bbC(\bbB^{\otimes d})$ is well-defined for each $i$:
\begin{equation}\label{def:tDemazure}
\rho_i( b ) := \frac{S_i x_i b - \sigma_i(b) S_i x_i}{x_i - x_{i+1}}.
\end{equation}
\item Let $f \in \Fr^{\otimes d}$ and $P \in \bbC[x_1^{\pm1}, \dots, x_d^{\pm1}]$. Then, 
$\rho_i( fP ) = \sigma_i(f) \partial_i(P) S_i x_i$. 
In particular, $\rho_i$ is uniquely determined by the value $\rho_i(x_i) = S_ix_i$.
\end{enua}
\end{prop}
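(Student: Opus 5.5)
Parts (a) and (b) are computations in the group algebra $\Fr^{\otimes d}$, which is commutative. For (a), expand
\[
e_i^2=\tfrac{1}{m^2}\sum_{j,l}t_i^{j+l}t_{i+1}^{-(j+l)} .
\]
Reindex by $s=j+l \bmod m$. Each $s$ then occurs exactly $m$ times, so $e_i^2=e_i$.

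For (b), commutativity reduces the claim to $S_i f=\sigma_i(f)S_i$. It suffices to check this on monomials $f=t_i^at_{i+1}^b$, since other tensor factors commute and are fixed by $\sigma_i$. We have
\[
S_i t_i^a t_{i+1}^b=\sum_j t_i^{j+a}t_{i+1}^{-j+b}
\quad\text{and}\quad
\sigma_i(f)S_i=\sum_j t_i^{j+b}t_{i+1}^{-j+a}.
\]
Substituting $j\mapsto j+b-a$ in the second sum turns it into the first. Since both sides are linear in $f$, (b) follows.

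For (c) and (d), first write the numerator of $\rho_i(b)$ using (b). Write $b=fP$ with $f\in\Fr^{\otimes d}$ and $P$ a Laurent polynomial; such elements span $\bbB^{\otimes d}$ because the $x$'s are central in $\bbB$. The numerator is then
\[
S_ix_ifP-\sigma_i(f)\sigma_i(P)S_ix_i .
\]
Since the $x$'s commute with $\Fr^{\otimes d}$, the first term equals $S_ifPx_i$, and by (b) this is $\sigma_i(f)S_iPx_i$. Everything commutes with the $P$'s, so the numerator equals
\[
\sigma_i(f)\bigl(P-\sigma_i(P)\bigr)S_ix_i .
\]
The ordinary Demazure fact that $x_i-x_{i+1}$ divides $P-\sigma_i(P)$ in $\CC[x^{\pm1}]$ then gives a genuine element, namely $\sigma_i(f)\partial_i(P)S_ix_i$. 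This proves well-definedness in (c) and the formula in (d). Linearity extends both to all of $\bbB^{\otimes d}$, since $\bbB^{\otimes d}=\Fr^{\otimes d}\otimes\CC[x^{\pm1}]$ as a free $\CC[x^{\pm1}]$-module. To make the division meaningful, I would interpret it as division in this free module, where $x_i-x_{i+1}$ is a non-zero-divisor.

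For the ``uniquely determined'' clause, I would observe that $\rho_i$ satisfies a twisted Leibniz rule
\[
\rho_i(ab)=\sigma_i(a)\rho_i(b)+\rho_i(a)b .
\]
This can be checked directly from the defining formula: add and subtract $\sigma_i(a)S_ix_ib$ in the numerator. The rule vanishes on $\Fr^{\otimes d}$ and on $x_j$ for $j\ne i,i+1$, and $\rho_i(x_{i+1})=-S_ix_i$ follows from $\partial_i$. Hence $\rho_i$ is determined by $\rho_i(x_i)=S_ix_i$ together with the twisted Leibniz rule. The main (mild) obstacle is keeping the ordering right when commuting $S_i$ past $\Fr$-coefficients. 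That bookkeeping is exactly what (b) handles.
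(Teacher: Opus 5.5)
Your proposal is correct and takes the paper's route. Parts (a) and (b) are direct computations in the commutative group algebra. Parts (c) and (d) come out together: by (b) the numerator equals $\sigma_i(f)\bigl(P-\sigma_i(P)\bigr)S_ix_i$, and you divide by the non-zero-divisor $x_i-x_{i+1}$, which is the calculation the paper cites from \cite[(3.1)]{LM25}. Your detour through the twisted Leibniz rule for the uniqueness clause is valid but unnecessary, since (d) already reads $\rho_i(fP)=\sigma_i(f)\partial_i(P)\rho_i(x_i)$.
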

\begin{proof}
Parts (a) and (b) follow from a direct verification.
Part (c) follows from (b), and part (d) follows from another direct calculation, see \cite[(3.1)]{LM25}.    
\end{proof}
\begin{defn}\label{def:PQWP}
	Denote by $\Fr[x^{\pm1}] \wr \cH(d)$ the 
	{\em polynomial quantum wreath product} defined in \cite[Definition 3.7]{LM25} for the choices $\Fr := \bbC[C_m]$, $\bbB = \Fr[x^{\pm1}]$, 
	$R = q(t^k \otimes t^k)$ for some $0\leq k < m$
	and $\beta = (q-1) e x_1$.
	In other words, $\Fr[x^{\pm1}] \wr \cH(d)$ denotes a $\CC$-algebra generated by $H_1, \cdots, H_{d-1}$ and elements in $\bbB^{\otimes d}$, subject to the following relations:
	\begin{align}
		&\textup{(braid relations) } \label{def:BR}
		& H_l H_{l+ 1} H_l = H_{l+ 1} H_l H_{l + 1},  \quad H_i H_j = H_j H_i,
		\\
		&\textup{(quadratic relations) }\label{def:QR}
		&H_i^2 = {S_i}H_i + R_i ,
		\\
		&\textup{(wreath relations) }\label{def:WR}
		& H_ib = \sigma_i(b)H_i + \rho_i(b),
	\end{align}
	for $1\leq l \leq d-2$, $1\leq i \leq d-1$, $|j-i|\geq 2$, and $b\in \bbB^{\otimes d}$.
	Here, the choice of parameters are given by
	\begin{equation}\label{eq:PQWPpar}
		\sigma = \textup{flip},
		\quad
		S = (q-1)e,
		\quad
		\rho(x\otimes 1) = S (x \otimes 1),
		\quad
		R  = q(t^k \otimes t^k).
	\end{equation}
\end{defn}

In particular, \eqref{def:WR} implies the following relations, thanks to \cref{DemazureFacts}(d):
\begin{align}
    \label{eq:commutationHf} H_if = \sigma_i(f)H_i,  &\qquad ( f\in \Fr^{\otimes d}),
    \\
    \label{eq:commutationHP} H_iP = \sigma_i(P)H_i + \partial_i(P) S_i x_i, &\qquad ( P \in \bbC[x_1^{\pm1}, \dots, x_d^{\pm1}]).
\end{align}

\begin{expl}\label{ex:aha}
	Consider the special case when $m=1$, i.e. $\Fr=\CC$. The quantum wreath product $\CC[x^{\pm1}] \wr \cH(d)$ has quadratic and wreath relations:
		\[
		H_i^2 = (q-1) H_i + q,
		\quad
		H_i b = \sigma_i(b)H_i + (q-1) \frac{b - \sigma_i(b)}{x_i - x_{i+1}} x_i,
		\quad
		(b \in \CC[x^{\pm1}]^{\otimes d}).
		\]
		It then follows that
		$\CC[x^{\pm1}] \wr \cH(d):= \HA$ is  the affine Hecke algebra.
		See \cite[\S4.2]{LNX24} for details.
\end{expl}
\begin{expl}\label{ex:maha}
	Fix $n\geq 1$. 
	One can consider another quantum wreath product $\CC[x^{\pm n}] \wr \cH(d)$ that is technically not an instance of the polynomial one defined in \cref{def:PQWP}.
	Here, the relations are: 
	\[
	H_i^2 = (q-1) H_i + q,
	\quad
	H_i b = \sigma_i(b)H_i + (q-1) \frac{b - \sigma_i(b)}{x_i^n - x^n_{i+1}} x^n_i,
	\quad
	(b \in \CC[x^{\pm n}]^{\otimes d}).
	\]
    Note the denominator in the above wreath relation is $x_i^n-x_{i+1}^n$ and not $x_i - x_{i+1}$. 
	We will see in Proposition~\ref{prop:IHtoPQWP} that $\CC[x^{\pm n}] \wr \cH(d) \cong \HGbI$, the metaplectic Iwahori-Hecke algebra. 
	The assignment $x_i^{\pm1} \mapsto x_i^{\pm n}$, $H_i \mapsto H_i$ gives an algebra isomorphism between $\CC[x^{\pm n}] \wr \cH(d)$ and the algebra in Example~\ref{ex:aha}.
	$p$-adically, 
	the isomorphism $\HGI \cong \HGbI$ is known as the local Shimura correspondence~\cite{S88}.
\end{expl}

In \S\ref{subsec:HAY} we will consider the affine Yokonuma Hecke algebra $\HAY$ defined in \cite[Section 2]{CP16} as an example of polynomial quantum wreath product. 
$\HAY$ is a deformation of the group algebra of the wreath product $(C_m \times \bbZ) \wr \Sigma_d$.
It was first observed in \cite{RS20} as an instance of a Rosso--Savage algebra, and hence $\HAY$ can be realized as a polynomial quantum wreath product with $R = q(1\otimes 1)$.

\begin{rmk}\label{rmk:gldsld}
For our purpose, we prefer to work with a more general setup where $R = q(t^k\otimes t^{k})$ for some $0\leq k < m$. 
This choice gives more flexibility and is useful in practice. 
While one of the main objectives of this work is to relate this quantum wreath product to the pro-$p$ Iwahori Hecke algebra of $\GL_d$ (and its metaplectic cover) where we will specialize $R$ to $q (1\otimes 1)$, our result can be generalized to $\mathbf{SL}_d$, where one should work with $R=q(t^{m/2} \otimes t^{m/2})$ instead (cf.~\eqref{eq:MpQR} and the matching in Theorem~\ref{thm:PQWP2}, see also~\cite[p. 715]{CS16}).
\end{rmk}

In the following, we will briefly discuss the special case $R = q(1\otimes 1)$ in Section \ref{subsec:HAY}, and then deal with the general case in the rest of Section \ref{sec:PQWP}.

\subsubsection{Affine Yokonuma Hecke Algebras} \label{subsec:HAY}

While $q$ can be generic in \cite{CS16}, here we focus on the case when $q$ is the order of a finite field, i.e., $q \in \ZZ_{\geq 2}$.  
Consider the polynomial quantum wreath product $\HAY \cong \Fr[x^{\pm1}]\wr\cH(d)$ for the choices in \eqref{eq:PQWPpar}, with 
\begin{equation}\label{def:R-Yokonuma}
	R = q(1\otimes 1).
\end{equation}
It is surprising that \eqref{def:QR}, although involving idempotents in $\Fr^{\otimes d}$, does split as follows:
\begin{lem}\label{Qsplits}
For each $i$, the equation 
$(H_i + \al_i) (H_i - \alb_i) = 0 = (H_i - \alb_i)(H_i + \al_i) $
holds in $\HAY$, 
where $\al_i, \alb_i \in \Fr^{\otimes d} \subseteq \bbB^{\otimes d}$ are given by:
\begin{equation}\label{def:alphab}
\al_i := (\sqrt{q}+1) e_i -\sqrt{q}(1^{\otimes d}),
\quad
\alb_i := (\sqrt{q}+q)e_i -\sqrt{q}(1^{\otimes d}).
\end{equation}
\end{lem}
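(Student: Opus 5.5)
Expanding the product and reducing with the quadratic relation \eqref{def:QR} turns the identity into a statement inside the commutative algebra $\Fr^{\otimes d}$, where it can be checked by hand. First expand
\[
(H_i+\al_i)(H_i-\alb_i) = H_i^2 - H_i\alb_i + \al_i H_i - \al_i\alb_i .
\]
Since $\al_i,\alb_i$ lie in $\Fr^{\otimes d}$, \eqref{eq:commutationHf} gives $H_i\alb_i = \sigma_i(\alb_i)H_i$. Both $e_i$ and $1$ are $\sigma_i$-invariant, because $\sigma_i(e_i)=\frac1m\sum_j t_i^{-j}t_{i+1}^{j}=e_i$ after reindexing $j\mapsto m-j$. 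Hence $\sigma_i(\alb_i)=\alb_i$ and the whole expression equals $H_i^2+(\al_i-\alb_i)H_i-\al_i\alb_i$.

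Next substitute $H_i^2=(q-1)e_iH_i+q$, which is \eqref{def:QR} with $R=q(1\otimes1)$. The expression becomes
\[
\bigl((q-1)e_i+\al_i-\alb_i\bigr)H_i + (q-\al_i\alb_i).
\]
It then suffices to verify two identities in $\Fr^{\otimes d}$:
\[
\alb_i-\al_i=(q-1)e_i, \qquad \al_i\alb_i=q .
\]
The first is immediate from \eqref{def:alphab}, since $(\sqrt q+q)-(\sqrt q+1)=q-1$. For the second, use $e_i^2=e_i$ from \cref{DemazureFacts}(a). Writing $s=\sqrt q$,
\[
\al_i\alb_i=\bigl[(s+1)(s+q)-s(s+q)-s(s+1)\bigr]e_i+s^2 = (q-s^2)e_i+q = q,
\]
because $(s+1)(s+q)-s(s+q)-s(s+1)=q-s^2=0$. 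Equivalently, $\al_i=e_i-s(1-e_i)$ and $\alb_i=qe_i-s(1-e_i)$ in the orthogonal idempotent decomposition $1=e_i+(1-e_i)$, so their product is $qe_i+q(1-e_i)=q$.

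For the reverse product $(H_i-\alb_i)(H_i+\al_i)$, repeat the computation: $\sigma_i$-invariance of $\al_i$ gives $H_i^2+(\al_i-\alb_i)H_i-\alb_i\al_i$, and this vanishes by the same two identities. Here one uses that $\Fr^{\otimes d}$ is commutative, so $\alb_i\al_i=\al_i\alb_i$.

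There is no real obstacle. The only points to handle carefully are the $\sigma_i$-invariance of $e_i$, which justifies moving $H_i$ past $\alb_i$ (and $\al_i$) without a correction term, and the idempotence $e_i^2=e_i$, which makes the product $\al_i\alb_i$ collapse to the scalar $q$.
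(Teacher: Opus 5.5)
Your proof is correct and is the direct computation the paper alludes to; the paper omits it and instead points to Lemma~\ref{Qsplits2}, of which this is the case $k=0$, $b=-\sqrt{q}$, $a=\sqrt{q}+1$. The only difference is that Lemma~\ref{Qsplits2} derives $\al$ by solving $\al\alb=R$ inside $(\Fr\otimes\Fr)e$, for an $\al$ that need not be $\sigma$-invariant, whereas you verify the given $\al_i,\alb_i$ directly using their $\sigma_i$-invariance (available because $k=0$) and $e_i^2=e_i$.
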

The proof of Lemma \ref{Qsplits} follows from a direct computation.
It can also be proved under a more general setup (see Lemma \ref{Qsplits2}), so we omit the calculation here.

\begin{rmk}
    Note that the sign convention is chosen so that 
\[
(H_i+\al_i) H_i = (H_i + \al_i) \alb_i
\quad
\textup{and}
\quad
(H_i -\alb_i)H_i = (H_i - \alb_i)\al_i,
\]
i.e., 
 $\alb_i$ is an $H_i$-eigenvalue with respect to the eigenvector $H_i + \al_i$, 
 and  $-\al_i$ is an $H_i$-eigenvalue with respect to the eigenvector $H_i - \alb_i$.
\end{rmk}
\begin{rmk}
Since the braid relations hold, elements of the form $H_w$ for $w\in \Sigma_d$ are well-defined.
Recall from \cref{ex:aha} that $\CC[x^{\pm1}]\wr \cH(d)$ is isomorphic to the affine Hecke algebra $\HA$.
In this case, the subspace $\Span_\bbC\{ H_w ~|~w\in \Sigma_d\} \subseteq \HA$ forms a $\bbC$-subalgebra that is isomorphic to the finite Hecke algebra.

In the Yokonuma case, the subspace $\Span_\bbC\{ H_w ~|~ w\in \Sigma_d\} \subseteq \HAY$ is not closed under multiplication; as such the finite (or affine) Hecke algebra is not a subalgebra of the affine Yokonuma algebra. 

\end{rmk}
\begin{rmk}\label{rem:finite-affine-yokonuma}
We note that the finite Yokonuma Hecke algebra $\HY$ can also be realized as a quantum wreath product $\HY \cong \Fr \wr  \cH(d)$.
The wreath relation \eqref{def:WR} becomes $H_i f = \sigma_i(f) H_i$ for $f \in \Fr^{\otimes d}$, 
since it follows from \eqref{def:tDemazure} and $\partial_i(P) = 0$ that $\rho_i(f) = 0$. 
$\HY$ is then generated by elements $H_1, \cdots H_{d-1}$ and elements in $\Fr^{\otimes d}$ and can easily be seen to be a subalgebra of $\HAY$. 
\end{rmk}
\subsubsection{The Splitting Lemma}\label{subsec:HGIp}
Within this section, consider the quantum wreath product $\Fr\wr\cH(d)$ for the choices in \eqref{eq:PQWPpar}, with  $R = q(t^{k}\otimes t^{k})$ for some $0 \leq k \leq m-1$.
When $R= q(t^{k}\otimes t^{k})$ is not a scalar (i.e. $k \neq 0$), the existence of an element $\sqrt{R} \in \Fr \otimes \Fr$ that squares into $R$ is not guaranteed, 
and hence one cannot always renormalize the quadratic relations into ones taking the form $T^2 \in 1 + (\Fr \otimes \Fr) T$.

We begin with a lemma regarding interaction with the idempotent $e$.
\begin{lem}\label{lem:afc}
\begin{enua}
\item The element $e$ is an intertwiner. That is, $e(f\otimes f') = (f'\otimes f)e$ for all $f,f' \in \Fr$.
\item The following hold: $(t\otimes 1) e = e (t\otimes 1) = (1\otimes t) e = e (1\otimes t)$.
\item The set $\{ e, (t\otimes 1) e, \dots, (t^{m-1}\otimes 1)e\}$ forms a $\CC$-basis of $(\Fr\otimes\Fr) e$.
In particular, any element in $(\Fr\otimes\Fr)e$ must take the form $ae$ for some $a \in \Fr\otimes \CC$.
\end{enua}
\end{lem}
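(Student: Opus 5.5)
It suffices to work with the explicit form $e = \frac{1}{m}\sum_{j=1}^m t^j\otimes t^{-j}$ and to check each claim on the basis elements $t^a\otimes t^b$ of $\Fr\otimes\Fr$, extending by linearity.

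For (a), both sides are bilinear in $(f,f')$, so it is enough to take $f=t^a$ and $f'=t^b$. Then
\[
e(t^a\otimes t^b)=\tfrac1m\sum_j t^{j+a}\otimes t^{b-j}.
\]
Reindex by $j\mapsto j+b-a$, which is legitimate because $j$ runs over all of $\bbZ/m$. The sum becomes $\tfrac1m\sum_j t^{j+b}\otimes t^{a-j}=(t^b\otimes t^a)e$, as required. This is the same observation as \cref{DemazureFacts}(b), since $S=(q-1)e$; one may simply cite that result.

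For (b), commutativity of $\Fr\otimes\Fr$ gives $(t\otimes1)e=e(t\otimes1)$ and $(1\otimes t)e=e(1\otimes t)$ directly. It remains to show $(t\otimes 1)e=(1\otimes t)e$. We have
\[
(t\otimes1)e=\tfrac1m\sum_j t^{j+1}\otimes t^{-j}.
\]
Shifting $j\mapsto j-1$ turns this into $\tfrac1m\sum_j t^{j}\otimes t^{1-j}=(1\otimes t)e$.

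For (c), first show the set spans. Every element of $\Fr\otimes\Fr$ is a combination of the $t^a\otimes t^b$. By (b), $(t^a\otimes t^b)e=(t^{a+b}\otimes 1)e$, so $(\Fr\otimes\Fr)e$ is spanned by $(t^c\otimes1)e$ for $c\in[m]$. Next show linear independence. The element $(t^c\otimes 1)e$ is $\tfrac1m$ times the sum of the basis vectors $t^{j+c}\otimes t^{-j}$, which are exactly those $t^a\otimes t^b$ with $a+b\equiv c \pmod m$. For distinct $c$ these supports are disjoint sets of standard basis vectors, and each is nonempty, so the elements are linearly independent. The final assertion follows at once: any element of $(\Fr\otimes\Fr)e$ equals $\sum_c \lambda_c (t^c\otimes 1)e=ae$ with $a=\sum_c\lambda_c t^c\otimes 1\in\Fr\otimes\CC$.

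There is no real obstacle. The only point needing care is the independence in (c), and the disjoint-support argument (grading by $a+b \bmod m$) settles it cleanly.
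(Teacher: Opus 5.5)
Your proof is correct and follows essentially the same route as the paper, which cites \cite[Lemma 2.6]{WW08} for (a), deduces (b) from (a) plus commutativity of $\Fr\otimes\Fr$, and derives (c) from (b); your direct reindexing simply makes (a) and (b) self-contained. Your disjoint-support argument (grading by $a+b \bmod m$) also supplies the linear independence in (c), which the paper's ``follows from (b)'' leaves implicit, since (b) alone only gives spanning.
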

\begin{proof}
Part (a) is a special case of \cite[Lemma 2.6]{WW08}.
Part (b) follows from the commutativity of $\Fr = \CC[t]/(t^m-1)$ and part (a).
Part (c) follows from part (b).
\end{proof}
Next, we prove the splitting lemma.
\begin{lem}\label{Qsplits2}
Let $\al = ae+b$ for some $a \in \Fr\otimes \CC$, $b \in \Fr\otimes \Fr$,
and let $\alb := \sigma(\al) + (q-1) e$.
Then, the equation
$(H_i+\al_i)(H_i - \alb_i) = 0 = (H_i - \alb_i)(H_i+\al_i)$ 
holds in $\Fr\wr\cH(d)$,
for any of the following choices:
\[
b\in \{\pm \sqrt{q}(t^i\otimes t^{k-i})~|~0\leq i \leq k\},
\quad
a\in\{-b-q, -b+1\}.
\]
In particular, one has $(H_i+\al_i)(H_i - \alb_i) = 0 = (H_i - \alb_i)(H_i+\al_i)$  where:
\begin{equation}\label{def:alphab2}
    \al_i = (\sqrt{q} t^{k}_i + 1)e_i - \sqrt{q}t^{k}_i,
\quad
\alb_i = (\sqrt{q}t^{k}_{i+1} + q)e_i - \sqrt{q}t^{k}_{i+1}.
\end{equation}
\end{lem}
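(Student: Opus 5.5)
The plan is to expand both products using only the quadratic relation \eqref{def:QR} and the wreath relation restricted to $\Fr^{\otimes d}$. By \cref{rem:finite-affine-yokonuma} and \eqref{eq:commutationHf}, $\rho_i(f)=0$ for $f\in\Fr^{\otimes d}$, so $H_i f=\sigma_i(f)H_i$. Everything then takes place in the two-tensor-factor situation $\Fr\otimes\Fr$ (subscript $i$), and I will suppress $i$.

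\textbf{Step 1: the coefficient of $H$ vanishes.} Expanding,
\[
(H+\al)(H-\alb)=H^2+\al H-\sigma(\alb)H-\al\alb=\bigl(S+\al-\sigma(\alb)\bigr)H+R-\al\alb .
\]
Since $\sigma(e)=e$ and $\sigma^2=\id$, we get $\sigma(\alb)=\al+(q-1)e=\al+S$, so the coefficient of $H$ is zero. In the other order,
\[
(H-\alb)(H+\al)=\bigl(S-\alb+\sigma(\al)\bigr)H+R-\alb\al ,
\]
and $\alb=\sigma(\al)+S$ kills the $H$-term again. Because $\Fr\otimes\Fr$ is commutative, both identities reduce to the single identity $\al\alb=R=q(t^k\otimes t^k)$ in $\Fr\otimes\Fr$.

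\textbf{Step 2: compute $\al\alb$ using \cref{lem:afc}.} Write $\alb=\sigma(a)e+\sigma(b)+(q-1)e$. By \cref{lem:afc}(a),(b), $(f\otimes f')e=(ff'\otimes 1)e$, so $\sigma(a)e=ae$. For $b=\pm\sqrt q(t^j\otimes t^{k-j})$ we get $be=\sigma(b)e=ce$ with $c:=\pm\sqrt q(t^k\otimes 1)\in\Fr\otimes\CC$, and $b\sigma(b)=q(t^k\otimes t^k)=R$. Using $e^2=e$,
\[
\al\alb=\bigl[a(a+q-1)+ac+(a+q-1)c\bigr]e+R
=\bigl[(a+c)^2+(q-1)(a+c)-c^2\bigr]e+R .
\]
The condition $a\in\{-b-q,-b+1\}$ is to be read modulo the annihilator of $e$, i.e. as $(a+c)e\in\{-qe,\,e\}$. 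In both cases $(a+c)^2+(q-1)(a+c)$ acts on $e$ as $q$. Hence $\al\alb=R+(q-c^2)e=R+q\bigl(1-t^{2k}\otimes 1\bigr)e$.

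\textbf{Step 3 (main obstacle): the final term.} The last term $q\bigl(1-t^{2k}\otimes1\bigr)e$ vanishes exactly when $t^{2k}=1$, i.e. for $k\in\{0,m/2\}$. These are the cases relevant here: $\GL_d$ gives $k=0$, and $\mathbf{SL}_d$ gives $k=m/2$, as in \cref{rmk:gldsld} and the introduction. So the argument as set up proves the lemma only under $t^{2k}=1$. I would state that restriction explicitly and check whether the lemma is intended only in those cases. If general $k$ is really meant, the first thing to try is to re-examine whether $a$ should be allowed to carry a compensating $\Fr\otimes\CC$ correction.

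\textbf{Step 4: the displayed formula.} Specialising to $j=0$, $b=-\sqrt q\,t^k\otimes 1$ and $a=-b+1$ gives $\al_i$ in \eqref{def:alphab2}. The expression for $\alb_i$ follows from $\alb=\sigma(\al)+(q-1)e$. For $k=0$ this recovers \cref{Qsplits}.
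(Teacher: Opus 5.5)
Your argument is the same as the paper's proof. You kill the $H_i$-coefficient using $\sigma(\alb)=\al+S$, reduce both orders to $\al\alb=R$ in the commutative algebra $\Fr\otimes\Fr$, and expand with \cref{lem:afc}.

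Your Step 3 is not a weakness of your route. It is a genuine error in the statement, and the paper's proof passes through the same point without noticing. The equality $-b\sigma(b)e-\tfrac{(q-1)^2}{4}e=-(\tfrac{q+1}{2})^2e$ in the paper's proof silently uses $b\sigma(b)e=qe$. In fact $b\sigma(b)e=q(t^k\otimes t^k)e=q(t^{2k}\otimes 1)e$.

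Your identity gives
\[
(H_i+\al_i)(H_i-\alb_i)=R_i-\al_i\alb_i=-q\,(1-t_i^{2k})\,e_i .
\]
By \cref{lem:afc}(c), this is a nonzero element of $\Fr^{\otimes d}$ whenever $t^{2k}\neq 1$, for example when $m=3$, $k=1$. The residual term is the same for every listed choice of $a$ and $b$, since $c^2=q(t^{2k}\otimes 1)$ regardless of sign. So you can state firmly that the lemma, including \eqref{def:alphab2}, holds exactly for $k\in\{0,m/2\}$ and is false for the other values of $k$ allowed in that subsection.

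This restriction costs nothing from \cref{sec:SPQWP} onward, nor in the $\GL_d$ case $k=0$. The general-$k$ claims around \cref{KMSaction}, however, need either the restriction or the correction you suggest. That correction does work. Since $\Fr\cong\CC^m$ is semisimple, you can solve $y^2+(q-1)y=qt^{2k}$ in $\Fr$ and take $\al_i=(y_i+\sqrt{q}\,t^k_i)e_i-\sqrt{q}\,t^k_i$. Then $\al\alb=R$, but $a$ is no longer of the form $-b-q$ or $-b+1$.

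One small slip: \eqref{def:alphab2} comes from $j=k$, that is $b=-\sqrt{q}\,(t^k\otimes 1)$, not from $j=0$.
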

\begin{proof}
First, we show that $(H_i+\al_i)(H_i - \alb_i) = 0$ is equivalent to $(H_i - \alb_i)(H_i+\al_i) = 0$ for any $\al \in \Fr \otimes \Fr$, equivalently,
\[
\al H - H \alb  = H \al  -  \alb H,
\quad 
\al \alb = \alb \al.
\]
The first equality holds since $\al - \sigma(\alb) = \al - \sigma^2(\al) + \sigma(S) = S = \sigma(\al) - \alb$.
The second equality follows from the fact that $\Fr$ is commutative.

Next, it follows from Lemma \ref{lem:afc}
\begin{equation}\label{eq:calcidem}
ae = \sigma(a)e,
\quad
b e = e \sigma(b),
\quad
b^2 e = be\sigma(b)  = b\sigma(b) e.
\end{equation}
For simplicity, we may assume that $d=2$ and let $H:= H_1$. Now,
\[
(H+\al)(H - \alb) = H^2 - (\sigma(\alb) - \al)H - \al \alb.
\]
Since $\sigma(\alb) = \al + (q-1)e$, the degree one coefficient is precisely $S$. 
It remains to determine the values of $a$ and $b$ such that $R = \al \alb$. 
Expanding $\al \alb$ and applying \eqref{eq:calcidem}, we get
\[
\begin{split}
q(t^k\otimes t^k) &= (ae+b)(\sigma(a)e+\sigma(b) + (q-1)e)
\\
&= ae\sigma(a)e+ (q-1)ae^2 +ae\sigma(b)
+b\sigma(a)e+ (q-1)be +b\sigma(b)
\\
&= (a^2+ (2b+(q-1)) a + (q-1)b)e + b\sigma(b).
\end{split}
\]
That is, $b\sigma(b) = q(t^k\otimes t^k)$ and hence $b\in \{\pm \sqrt{q}(t^i\otimes t^{k-i})~|~0\leq i \leq k\}$.
Moreover,
\[
    0 = (a^2+ (2b+(q-1)) a + (q-1)b)e
= ((a+b+\tfrac{q-1}{2})^2  -(b+\tfrac{q-1}{2})^2 +(q-1)b)e.
\]
Note that
$(-(b+\tfrac{q-1}{2})^2 +(q-1)b)e
=
(-b^2 - (\tfrac{q-1}{2})^2 )e
$. It follows from \eqref{eq:calcidem} that
\[
-b^2e - (\tfrac{q-1}{2})^2e
=
- b\sigma(b)e - \tfrac{q^2 - 2q + 1}{4}e
= 
-(\tfrac{q + 1}{2})^2e,
\]
which leads to the fact that
\[
0 =
((a+b+\tfrac{q-1}{2})^2 -(\tfrac{q + 1}{2})^2)e
=
(a+b+q)(a+b-1)e.
\]
Therefore, $a \in \{-b-q,-b+1\}$. 
\end{proof}
\subsection{The Kashiwara--Miwa--Stern tensor space} \label{sec:KMSforPQWP}
For the rest of this section, consider the polynomial quantum wreath product $\bH:=\Fr[x^{\pm1}]\wr\cH(d)$ for the choices in \eqref{eq:PQWPpar}, with  $R = q(t^{k}\otimes t^{k})$ for some $0 \leq k \leq m-1$.
In this section, we describe the analogs of the KMS tensor space~\cite[(32)]{KMS95} for any such $\bH$ 
, which includes the affine Yokonuma Hecke algebra $\HAY$.

Fix a positive integer $N$.
Let $\bbV$ be the free right $\Fr$-module with basis $\{v_i\}_{i \in \bbZ}$.
Let $x^{\pm1}$ act on $\bbV$ by $v_i \cdot x^{\pm 1} = v_{i\pm N}$ for all $i$, and hence $\bbV = \bbV(N)$ carries a structure of $B$-module for any $N$.

Next, the $\Fr^{\otimes d}$-module $\bbV^{\otimes d}$ has a basis 
$\{v_f:= v_{f(1)} \otimes \cdots \otimes v_{f(d)}| f: [d]\to \bbZ\}$, on which $\Sigma_d$ acts (from the right) by place permutation.
Denote by $s_i := (i~i+1) \in \Sigma_d$ the simple transposition. 
\begin{prop}\label{KMSaction}
Recall elements $e_i$ and $\al_i$ in $\Fr^{\otimes d}$ from  \eqref{def:e} and \eqref{def:alphab2}. 
Then:
\begin{enua}
\item
There is a unique right $\bH$-module structure on $\bbV^{\otimes d}$ determined by the following:
\begin{equation}\label{eq:fundaction}
  v_f \cdot H_i = \begin{cases}
    v_{f\cdot s_i}
    &\textup{if } f(i) < f(i+1),
    \\
     v_{f}(-\al_i) 
     &\textup{if }  f(i) = f(i+1),
    \\
    v_{f\cdot s_i} R_i
    + v_{f} (q-1) e_i 
    &\textup{if }  f(i) > f(i+1),
    \end{cases}
\end{equation}
where $f$ is in the fundamental region, i.e. $f(i) \in \{1, \dots, N\}$ for all $i$.
\item  There is a decomposition into $\bH$-submodules as follows:
\[
\bbV^{\otimes d} \cong \bigoplus\nolimits_{\lambda \in \Lambda_{N,d}} M^\lambda,
\]
where $\Lambda_{N,d}$ consists of all weak compositions of $d$ of $N$-steps,
and each submodule $M^\lambda$ is generated from the basis element 
$v_\lambda 
:= v_{1}^{\otimes \lambda_1} \otimes 
v_{2}^{\otimes \lambda_2} \otimes 
\dots
v_{N}^{\otimes \lambda_N} \in \bbV^{\otimes d}$.
\end{enua}
\end{prop}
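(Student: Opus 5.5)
The plan is to realize $\bbV^{\otimes d}$ as a wreath module in the sense of \cite{LNX25}, and where that is not available directly, to verify the defining relations of \cref{def:PQWP} by hand. First I would make precise what \eqref{eq:fundaction} determines. Since $\bbV$ is a free right $\Fr$-module with $x$ acting by $v_i \mapsto v_{i+N}$, every basis vector $v_g$ can be written as $v_f\, x^{\lambda}$ with $f$ in the fundamental region and $\lambda\in\ZZ^d$. Hence $\bbV^{\otimes d}$ is a free right $\bbB^{\otimes d}$-module with basis $\{v_f\}_{f:[d]\to[1,N]}$. The action of $H_i$ on a general element $v_f\, b$ is then forced by the wreath relation \eqref{def:WR}:
\[
v_f b\cdot H_i := (v_f\cdot H_i)\,\sigma_i(b) + v_f\,\rho_i(b),
\]
with $\rho_i$ as in \cref{DemazureFacts}(c),(d). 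This gives uniqueness immediately. For existence, it remains to check that the resulting operators satisfy the quadratic, wreath and braid relations.

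The wreath relation holds by construction, once one knows that the formula is compatible with composition in $b$. This compatibility reduces to the twisted Leibniz rule $\rho_i(bb') = \sigma_i(b)\rho_i(b') + \rho_i(b)b'$, which follows from \eqref{def:tDemazure} together with \cref{DemazureFacts}(b).

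For the quadratic relation it suffices, again by \eqref{def:WR}, to check it on the generators $v_f$. The check splits into three cases:
\begin{itemize}
\item When $f(i)=f(i+1)$, the relation follows from \cref{Qsplits2}: $-\al_i$ is a root of $X^2 - S_iX - R_i$ in the commutative ring $\Fr^{\otimes d}$, because $(\al_i - \sigma_i(\alb_i))$-type identities give $\al_i^2 + S_i\al_i = R_i$.
\item When $f(i)<f(i+1)$, compute $v_f H_i^2 = v_{fs_i}H_i = v_f R_i + v_{fs_i}S_i$.
\item When $f(i)>f(i+1)$, compute $v_fH_i^2 = v_{fs_i}H_i R_i' + \cdots$. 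Here one uses \eqref{eq:commutationHf} to move $R_i$ and $e_i$ past $H_i$, noting that $\sigma_i(R_i)=R_i$ and $\sigma_i(e_i)=e_i$.
\end{itemize}
Both of the last two cases are short computations in $\Fr^{\otimes d}$ using $e_i^2=e_i$.

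I expect the braid relation to be the main obstacle. Since $\sigma$ is the flip and $\rho_i$ is built from the Demazure operators, the two sides of $H_iH_{i+1}H_i = H_{i+1}H_iH_{i+1}$, applied to $v_f b$, differ by terms that are determined by their values on $v_f$ together with the compatibility of $\rho$ with the braid relations. That compatibility is already guaranteed, because the quantum wreath product is known to be well-defined and to have a PBW basis \cite{LM25}. So the check reduces to $v_f$ with $f$ in the fundamental region, restricted to the three positions $i,i+1,i+2$. This is a case analysis over the relative order of $(f(i),f(i+1),f(i+2))$: all distinct, exactly two equal, or all equal.

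The distinct case mimics the classical Hecke algebra computation for $V^{\otimes d}$. The equal cases require the identities of \cref{lem:afc}, namely that $e_i$ intertwines and that $t_i e_i = t_{i+1}e_i$, together with the explicit forms \eqref{def:alphab2}. I would first try to shortcut this by invoking the wreath module theorem of \cite{LNX25}: take $M=\bbV$, check that $S$ and $R$ act on the relevant pieces of $M\otimes M$ by the prescribed "scalars" in $\Fr\otimes\Fr$, and let $N$ be the sign module of the twisted Hecke algebra on which $H$ acts by $-\al$. If that framework does not apply verbatim, because of the non-scalar coefficients in $\Fr^{\otimes d}$, I would fall back on the direct case analysis.

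For (b), the subspace $M^\lambda$ should be the $\bbB^{\otimes d}$-span of the $v_f$ with $f$ in the fundamental region and content $\lambda$, meaning $|f^{-1}(j)|=\lambda_j$. From \eqref{eq:fundaction} this span is stable under $H_i$, since each $H_i$ only permutes entries and multiplies by elements of $\bbB^{\otimes d}$. The fundamental-region basis then partitions by content, which gives the direct sum decomposition. Finally, every $v_f$ of content $\lambda$ equals $v_\lambda H_w$, where $w$ is a minimal length coset representative, by repeated use of the first case of \eqref{eq:fundaction}. Hence $M^\lambda = v_\lambda\bH$.
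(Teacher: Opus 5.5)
Your route is different from the paper's. The paper proves (a) by citing \cite[Proposition 7.1]{LM25}: it only checks the hypotheses (commutativity of $\Fr$ and splitting of the quadratic relation) and then deduces (b).

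You instead build the action by hand on the free right $\bbB^{\otimes d}$-module with basis $\{v_f\}$, $f$ fundamental. Commutativity of $\bbB^{\otimes d}$ gives you the twisted Leibniz rule. It also gives $\rho_i\sigma_i=-\rho_i$, which is what makes your rule $(v_fb)\cdot H_i=(v_f\cdot H_i)\sigma_i(b)+v_f\rho_i(b)$ the correct right-module form of \eqref{def:WR}. You then use the (P4)/(P6)/(P7) compatibilities behind the PBW theorem to reduce the quadratic and braid relations to the generators $v_f$, and finish with a finite case check.

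What each route buys:
\begin{itemize}
\item Yours is self-contained and shows exactly where the splitting enters. The cost is a braid case analysis you only sketch. It does go through; for instance, with $d=3$ the mixed cases need identities such as $\sigma_1(e_2)e_2=e_1e_2$ and $t_i^{2k}e_i=e_i$.
\item The paper's citation transfers verbatim to the skew setting of \cref{prop:KMSaction}. There your identity $\rho_i\sigma_i=-\rho_i$ fails, since $\rho\sigma(x_1)=-x_1S\neq -Sx_1$ when $n>1$.
\end{itemize}
Your treatment of (b) is fine.

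There is a concrete error in the case $f(i)=f(i+1)$ of the quadratic relation. You treat $-\al_i$ as a scalar eigenvalue, but $\al_i$ is twisted when $H_i$ passes it. By \eqref{eq:commutationHf},
\[
v_f\cdot H_i^2=-(v_f\al_i)\cdot H_i=-(v_f\cdot H_i)\,\sigma_i(\al_i)=v_f\,\al_i\sigma_i(\al_i),
\qquad
v_f\cdot(S_iH_i+R_i)=-v_f\,\al_iS_i+v_fR_i .
\]
Since $v_f$ is a free generator, the identity you need is $\al_i\sigma_i(\al_i)+S_i\al_i=R_i$, i.e.\ $\al_i\alb_i=R_i$. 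It is not $\al_i^2+S_i\al_i=R_i$. That version fails as soon as $k\neq 0$: for $k=m/2$ one computes $\al_i^2+S_i\al_i=q$, whereas $R_i=q\,t_i^{m/2}t_{i+1}^{m/2}$.

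The repair is immediate, because $\al\alb=R$ is exactly what the computation in \cref{Qsplits2} is after. Note, however, that this identity is itself sensitive to $k$. A direct computation gives
\[
\al_i\alb_i=R_i+q\,(1-t_i^{2k})e_i ,
\]
so it holds only when $m\mid 2k$, i.e.\ $k\in\{0,m/2\}$. The paper's own proof cites \cref{Qsplits} (the case $k=0$), and \S\ref{sec:SPQWP} restricts to $k\in\{0,m/2\}$. For other $k$, no argument can produce the stated module structure with this $\al_i$, so you should state that restriction explicitly.
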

\begin{proof}
Part (a) is a special case of \cite[Proposition 7.1]{LM25}, since $\Fr$ is a commutative algebra, and the quadratic relation splits as in \cref{Qsplits}. Part (b) is a consequence of part (a).
\end{proof} 
\begin{expl}\label{expl:KMSaction}
\begin{enua}
\item
Suppose that $N=1$. Then, the only $f$ in the fundamental region is given by $f(i) = 1$ for all $1\leq i \leq d$. 
If additionally $d=2$, then
\[
\begin{split}
(v_1\otimes v_1) \cdot H_1  &= (v_1 \otimes v_1)\alb_1
\\
&= (v_1 \otimes v_1) \Big(
(\sqrt{q} (1\otimes t^{k}) +q )\tfrac{1}{m}\sum\nolimits_{j=1}^m (t^j\otimes t^{-j})
-\sqrt{q}(1\otimes t^{k})
\Big)
\\
&=
v_1\otimes (v_1 \sqrt{q}t^{k})
+\sum\nolimits_{j=1}^m
(v_1 \tfrac{\sqrt{q}+q}{m}t^j)\otimes(v_1 \tfrac{t^{-j}}{m}(\sqrt{q}t^{k}+q)).
\end{split}
\]
\item
Let $N\geq 2$ and let $d=2$.
Suppose that $f(1) = 1 <  f(2) = 2$, then $f$ is in the fundamental region, $f \cdot s_1$ is given by $1\mapsto 2, 2\mapsto 1$, and then
\[
(v_1\otimes v_2) \cdot H_1 = v_2 \otimes v_1.
\]
\item
Let $N=d=2$.
If $f$ is not in the fundamental region, say,
$f(1) = 1 < f(2) = 3$, then we cannot apply \eqref{eq:fundaction} directly, i.e., $(v_1\otimes v_3) \cdot H_1 \neq v_3 \otimes v_1$.
Note that $v_1\otimes v_3 = (v_1\otimes v_1) x_2$, so the $H_1$-action can be obtained using the wreath relation
\[
x_2 H_1 = H_1 x_1 - S_1 x_1 \partial_1(x_1) = H_1 x_1 - (q-1)e_1 x_1.
\]
Therefore,
\[
\begin{split}
(v_1\otimes v_3) \cdot H_1 &= (v_1 \otimes v_1)(H_1 x_1 - (q-1)e_1 x_1)
\\
&= (v_1 \otimes v_1)(\alb_1 x_1 - (q-1)e_1 x_1)
\\
&= (v_1 \otimes v_1)((\sqrt{q} t^{k}_2+1)e_1x_1 - \sqrt{q}t^{k}_2x_1)
\\
&= (v_3 \otimes v_1)((\sqrt{q} t^{k}_2+1)e_1 - \sqrt{q}t^{k}_2).
\end{split}
\]
\end{enua}
\end{expl}

\subsection{Wreath modules}\label{sub:wreathmod}
In this section, we realize the analogs of the Kashiwara-Miwa-Stern tensor space for $\bH = \Fr[x^{\pm1}]\wr\cH(d)$ as wreath modules, following \cite[Section 6.4]{LNX25}.
For each $1\leq i \leq N$, consider the cyclic $\Fr[x^{\pm 1}]$-submodule:
\[
M_i := v_i \Fr [x^{\pm 1}]  = \bigoplus\nolimits_{j \in\bbZ} v_{i+jN} \Fr \subseteq \bbV.
\]
Hence, there is a decomposition $\bbV = M_1 \oplus \dots \oplus M_N$.
While the $M_i$'s are all isomorphic to $\bbB = \Fr[x^{\pm1}]$ as $\bbB$-modules,
it is useful to distinguish these submodules.
In particular, $M_i^{\otimes d}$ has a $\CC$-basis
\begin{equation}\label{eq:MidbasisB}
\{v^+(t^{a_1}x^{\lambda_1}\otimes \dots \otimes t^{a_d}x^{\lambda_d}) ~|~ a_i \in [m], \lambda_i \in \ZZ \},
\end{equation}
where $v^+:= (v_i^{\otimes d})$.
It is proved in \cite{LNX25} that each $\bbC$-vector space $M_i \wr N := M_i^{\otimes d} \otimes N$ has a $\bbB \wr \cH(d)$-module structure provided
that $N $ is a Specht module  over the  {\em twisted Hecke algebra} $\tcH = \tcH^{M_i}_d$ (see \cite[Section 3]{LNX25}), which depends on $M_i$ in general.

In our case, a direct calculation \cite[(6.4.2)]{LNX25}  shows that $\tcH^{M_i}_d$ is isomorphic to the subalgebra $\Fr \wr \cH(d) \subsetneq \bH$ for any $M_i$. Write $\bH_Y:= \Fr \wr \cH(d)$ for short.

While the general theory for Specht modules are given in \cite[Section 3.3]{LNX25},
here we only need the trivial and the sign modules for $\bH_Y$.
We call the  module $S_Y^{(d)} = \Fr^{\otimes d}$ on which each $H_i$ acts by $\alb_i$ the {\em trivial module}.
Similarly, the module $S_Y^{(1^d)} = \Fr^{\otimes d}$ on which each $H_i$ acts by $-\al_i$ is called the {\em sign module}. 
We sometimes use the shorthand $\triv:= S_Y^{(d)}$ and $\sgn := S_Y^{(1^d)}$ when the dependence on $d$ is unambiguous.
It is understood that $M_i \wr S_Y^{(1)} = M_i$ is a $\bbB$-module.

For the sake of completeness, we provide the precise formula for the right $\bH$-action on $M_i \wr \triv$ and on $M_i \wr \sgn$:
\begin{equation}\label{def:WMaction}
\begin{split}
(v^+ b \otimes 1)\cdot b'
:= (v^+ bb') \otimes 1,
\quad
(v^+ b \otimes 1)\cdot H_j 
&:= 
v^+ \sigma_j(b) \otimes (1 \cdot H_j) + v^+ \rho_j(b) \otimes 1
\\
&
=\begin{cases}
v^+ (\rho_j(b) +\sigma_j(b)\alb_j)\otimes 1&\textup{for }M_i \wr\triv;
\\
v^+ (\rho_j(b) -\sigma_j(b)\al_j)\otimes 1 &\textup{for }M_i \wr\sgn,
\end{cases}
\end{split}
\end{equation}
where $b, b'\in \bbB^{\otimes d}$, and $1\leq j < d$.
\begin{prop}\label{prop:MlambdaB}
\begin{enua}
\item Any wreath module of the form $M_i \wr S_Y^{(r)}$, for $1\leq r \leq d$, is a module over the subalgebra $\bbB \wr \cH(r) \subseteq \bbB \wr \cH(d)$ generated by $\bbB^{\otimes r}$ and $H_1, \dots, H_{r-1}$.
\item For any composition $\lambda = (\lambda_1, \dots, \lambda_n) \vDash d$, denote the corresponding parabolic subalgebra by 
$\bbB\wr \cH(\lambda) := (\bbB\wr \cH(\lambda_1)) \otimes \dots \otimes (\bbB\wr \cH(\lambda_r)) \subseteq \bbB\wr\cH(d)$. 
Then, as $\bH$-modules:
\[
M^\lambda \cong \Ind_{\bbB \wr \cH(\lambda)}^{\bbB \wr \cH(d)}\big( 
(M_1\wr S_Y^{(\lambda_1)})\boxtimes \dots \boxtimes (M_n \wr S_Y^{(\lambda_n)}) 
\big)
= 
\big(
\boxtimes_{i=1}^n (M_i \wr S_Y^{(\lambda_i)})
\big) \otimes_{\bbB \wr \cH(\lambda)} \bH.
\]
\end{enua}
\end{prop}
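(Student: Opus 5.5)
For part (a), I will check the defining relations of $\bbB\wr\cH(r)$ directly on the explicit action \eqref{def:WMaction}, with $d$ replaced by $r$. The subalgebra $\bbB\wr\cH(r)$ is generated by $\bbB^{\otimes r}$ and $H_1,\dots,H_{r-1}$. The quadratic, braid and wreath relations only involve these generators, so it is enough to show that $M_i^{\otimes r}\otimes \triv$ (or $\sgn$) is a module for the quantum wreath product $\bbB\wr\cH(r)$.

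This is the content of the general wreath module theorem of \cite{LNX25} (their Section 6.4). It applies once we know two things. First, $S$ and $R$ act on $M_i\otimes M_i$ compatibly, so that the twisted Hecke algebra is $\tcH^{M_i}_r\cong \Fr\wr\cH(r)=\bH_Y$; this was noted above. Second, $\triv$ and $\sgn$ are genuine $\bH_Y$-modules. For the second point I will verify the following.
\begin{itemize}
\item Quadratic relation: $\alb_j^2=S_j\alb_j+R_j$ and $\al_j^2=-S_j\al_j+R_j$. These follow from Lemma \ref{Qsplits2}, taking into account that $\Fr^{\otimes r}$ acts on $\Fr^{\otimes r}$ by right multiplication and that $H_j$ acts by $\alb_j$ composed with the twist $\sigma_j$.
\item Wreath relation: $H_jf=\sigma_j(f)H_j$, which holds because the action is $f\mapsto\sigma_j(f)\alb_j$.
\item Braid relations for the twisted action.
\end{itemize}
I expect the braid relation to be the main obstacle. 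It needs the identity $\alb_j\,\sigma_j(\alb_{j+1})\,\sigma_j\sigma_{j+1}(\alb_j)=\alb_{j+1}\,\sigma_{j+1}(\alb_j)\,\sigma_{j+1}\sigma_j(\alb_{j+1})$ in $\Fr^{\otimes 3}$. I would verify it by expanding in the basis $\{t^{a}e\}$ from Lemma \ref{lem:afc}. Both sides should reduce to symmetric expressions in the $e_j$, $e_{j+1}$ and the idempotent $\frac1{m^2}\sum t_1^{a}t_2^{b}t_3^{-a-b}$, using that $e_je_{j+1}=e_{j+1}e_j$ equals that idempotent. The same argument works for $-\al_j$.

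For part (b), I will build an explicit map. Define a $\bbB\wr\cH(\lambda)$-linear map $\phi$ from $\boxtimes_i (M_i\wr S_Y^{(\lambda_i)})$ to $M^\lambda$ by sending $v^+ b\otimes 1$ to $v_\lambda b$. To see that it is linear for $H_j$ inside a block, note that there $f(j)=f(j+1)$, so \eqref{eq:fundaction} gives $v_\lambda H_j=v_\lambda(-\al_j)$. This agrees with the sign action, so the indexing should use $\sgn$, or $\triv$ if the convention on $\alb$ differs; I will match the convention against Example \ref{expl:KMSaction}(a). The wreath relation then transports this to $v_\lambda b$. By Frobenius reciprocity, $\phi$ extends to a surjective $\bH$-map from the induced module onto $M^\lambda$, since $M^\lambda$ is generated by $v_\lambda$.

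For injectivity, I will compare bases. By the PBW theorem for $\bH$ from \cite{LM25}, the induced module is free over the tensor product of the factors, with basis indexed by minimal-length coset representatives $w\in\Sigma_\lambda\backslash\Sigma_d$. Using \eqref{eq:fundaction}, $v_\lambda H_w=v_{\lambda\cdot w}$ for such $w$ modulo lower terms, and the $v_{\lambda\cdot w}\,\bbB^{\otimes d}$ span $M^\lambda$ freely over $\Fr^{\otimes d}[x^{\pm1}]$. A triangularity argument in the Bruhat order on these representatives then shows the images are linearly independent.
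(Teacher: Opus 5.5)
Your proof is correct. For (a) it is the same as the paper's, which just cites \cite[Corollary 6.4.1]{LNX25}. For (b) you replace that citation with a direct argument. The $\bbB\wr\cH(\lambda)$-map $v^+b\otimes 1\mapsto v_\lambda b$, extended by Frobenius reciprocity, is onto because $M^\lambda=v_\lambda\bH$. It is injective because the induced module is spanned by the elements $(v^+\otimes 1)\otimes H_w b$ with $w\in{}^\lambda\Sigma$, $b\in\bbB^{\otimes d}$ (by PBW, $H_{u'w}=H_{u'}H_w$, and \eqref{eq:commutationHf}). These are sent to $v_{\lambda\cdot w}b$, which are linearly independent in $\bbV^{\otimes d}=\bigoplus_g v_g\bbB^{\otimes d}$.

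The citation is shorter and places the statement inside the general theory of wreath modules over arbitrary Specht modules. Your route needs only the PBW theorem and \eqref{eq:fundaction}, makes the isomorphism explicit, and shows which block module actually occurs. On that point, commit to \eqref{eq:fundaction}: it gives $v_\lambda H_j=v_\lambda(-\al_j)$ for $s_j\in\Sigma_\lambda$, so with the paper's conventions the blocks are sign modules, consistent with \cref{thm:VGG2}. \cref{expl:KMSaction}(a) contradicts \eqref{eq:fundaction}, so it cannot settle the convention.

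The same induction idea would let you bypass \cite{LNX25} in (a) as well. By PBW, $\bbB\wr\cH(r)=\bigoplus_\mu \bH_Y x^\mu$, so $\triv\otimes_{\bH_Y}(\bbB\wr\cH(r))\cong\bbB^{\otimes r}$ with exactly the action \eqref{def:WMaction}. Then the only input needed is that $\triv$ and $\sgn$ are $\bH_Y$-modules.

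Three small corrections to your verification:
\begin{enumerate}
\item \emph{Quadratic relation.} On $\triv$ one has $1\cdot H_j^2=\sigma_j(\alb_j)\alb_j$, so the identity needed is $\sigma_j(\alb_j)\alb_j=S_j\alb_j+R_j$, and for $\sgn$ it is $\sigma_j(\al_j)\al_j=R_j-S_j\al_j$. Both follow from $R=\al\alb$ and $\sigma(\alb)=\al+S$ in \cref{Qsplits2}. The untwisted identity $\alb_j^2=S_j\alb_j+R_j$ you wrote is false for $k=m/2$: the difference is $q(1-t^{k}_jt^{k}_{j+1})$.
\item \emph{Braid relation.} This is not an obstacle. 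Write $\alb_{(a,b)}$ for $\alb$ placed in positions $a<b$. Each side is the product of $\alb_{(j,j+1)}$, $\alb_{(j,j+2)}$, $\alb_{(j+1,j+2)}$ in some order, so commutativity of $\Fr^{\otimes 3}$ finishes it.
\item \emph{Injectivity.} $v_\lambda H_w=v_{\lambda\cdot w}$ holds exactly for $w\in{}^\lambda\Sigma$, since each step is the case $f(i)<f(i+1)$ of \eqref{eq:fundaction}. So no Bruhat-order triangularity is needed: a spanning set mapped onto a linearly independent set already gives injectivity.
\end{enumerate}
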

\begin{proof}
This is a special case of  \cite[Corollary 6.4.1]{LNX25}.
\end{proof}

\begin{expl}
Suppose that $N = 2 = d$. Then, $\bbV \otimes \bbV \cong M^{(2,0)} \oplus M^{(1,1)} \oplus M^{(0,2)}$.
In particular,
\[
M^{(2,0)} = (v_1 \otimes v_1) \HGIp \cong M_1 \wr S_Y^{(2)},
\]
and the precise $\HGIp$-action is given by
\[
\begin{split}
    (v_{1+2i} \otimes v_{1+2j}) \cdot H_1 
    &= (v_1 \otimes v_1) \cdot (x^i \otimes x^j) H_1
    \\
    &= (v_1 \otimes v_1) \cdot (H_1(x^j \otimes x^i) + \rho(x^i\otimes x^j))
    \\
    &= (v_1 \otimes v_1) \cdot \alb (x^j \otimes x^i)
    + (v_1 \otimes v_1)\partial(x^i\otimes x^j) (q-1)e x_1.       
    \\
    &= (v_{1+2j} \otimes v_{1+2i})\alb  
    + (v_3 \otimes v_1)(q-1)\partial(x^i\otimes x^j) e .    
    \end{split}
\]
Similarly, 
$M^{(0,2)} = (v_2 \otimes v_2) \HGIp \cong M_2 \wr S_Y^{(2)}$.
Finally,
\[
M^{(1,1)} = (v_1 \otimes v_2) \HGIp 
\cong \Ind_{\bbB\otimes \bbB}^{\bbB \wr \cH(2)} (M_1 \boxtimes M_2) =  (M_1 \boxtimes M_2) \otimes_{\bbB \otimes \bbB} (\bbB \wr \cH(2)).
\]

\end{expl}

\subsection{Spherical and antispherical modules}\label{sec:PQWPsph}
For any Specht module $S_Y^\nu \in \bH_Y$-mod, the induced module 
\[
\Ind_{\bH_Y}^{\bH}(S_Y^\nu) = S_Y^\nu \otimes_{\bH_Y} \bH
\]
is a right module over $\bH$.
In particular, when $\nu = (d)$ or $(1^d)$, we call these modules the spherical module  and the antispherical modules, respectively:
\[
\sph := \Ind_{\bH_Y}^{\bH}(S_Y^{(d)}) = \triv \otimes_{\bH_Y} \bH,
\qquad
\asph:= \Ind_{\bH_Y}^{\bH}(S_Y^{(1^d)}) = \sgn \otimes_{\bH_Y} \bH.
\]
The action of $b' \in \bbB^{\otimes d}$ is given by $(1 \otimes b)\cdot  b'
:= 1 \otimes  (bb')$ and the action of $H_j, 1\leq j \leq d-1$ is given by
\begin{equation}\label{def:Sphaction} 
(1 \otimes b)\cdot H_j := 
1 \otimes (H_j \sigma_j(b) + \rho_j(b))
=\begin{cases}
1\otimes  (\rho_j(b) +\alb_j\sigma_j(b) ) &\textup{for }\sph,
\\
1\otimes  (\rho_j(b) -\al_j\sigma_j(b) ) &\textup{for }\asph.
\end{cases}
\end{equation}
By a direction comparison between \eqref{def:WMaction} and \eqref{def:Sphaction}, we obtain the following.
\begin{prop}
As $\bH$-modules, the (anti)-spherical modules are wreath modules. To be precise,
$\sph \cong \bbB \wr \triv$ and $\asph \cong \bbB \wr \sgn$.
\end{prop}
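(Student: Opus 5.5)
We will define explicit $\CC$-linear isomorphisms $\Phi:\bbB\wr\triv = \bbB^{\otimes d}\otimes\triv \to \sph$ and $\Psi:\bbB\wr\sgn\to\asph$ by $v^+ b\otimes 1\mapsto 1\otimes b$ for $b\in\bbB^{\otimes d}$. Here $\bbB$ is regarded as the $\bbB$-module $\bbB$ itself, i.e. $M_i$ with $v_i\mapsto 1$, and $\triv=\sgn=\Fr^{\otimes d}$ is generated by $1$. Then we will check that $\Phi$ and $\Psi$ intertwine the generators $b'\in\bbB^{\otimes d}$ and $H_j$, using \eqref{def:WMaction} and \eqref{def:Sphaction}.

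The first step is to show that $\Phi$ and $\Psi$ are well-defined bijections. For this we need that $\sph=\triv\otimes_{\bH_Y}\bH$ is spanned freely by the elements $1\otimes b$, $b\in\bbB^{\otimes d}$; equivalently, that $\bH$ is a free left $\bH_Y$-module with $\bbB^{\otimes d}$ supplying a complementary basis. We would take this from the PBW basis of the polynomial quantum wreath product in \cite{LM25}, namely $\bH=\bigoplus_{w}H_w\,\bbB^{\otimes d}$, together with $\bH_Y=\bigoplus_w H_w\Fr^{\otimes d}$. This gives $\bH\cong\bH_Y\otimes_{\Fr^{\otimes d}}\bbB^{\otimes d}$, and hence $\triv\otimes_{\bH_Y}\bH\cong\Fr^{\otimes d}\otimes_{\Fr^{\otimes d}}\bbB^{\otimes d}=\bbB^{\otimes d}$. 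The same argument applies to $\sgn$. On the wreath side, $\bbB^{\otimes d}\otimes\triv$ identifies with $\bbB^{\otimes d}$ by the basis \eqref{eq:MidbasisB}. So both sides are $\bbB^{\otimes d}$ and the map is the identity.

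The second step checks equivariance. The action of $b'$ is right multiplication on both sides, so it matches. For $H_j$, in $\sph$ we compute $(1\otimes b)H_j=1\otimes(H_j\sigma_j(b)+\rho_j(b))$ and move $1$ past $H_j$ to get $1\otimes(\alb_j\sigma_j(b)+\rho_j(b))$. On the wreath side we get $v^+(\rho_j(b)+\sigma_j(b)\alb_j)\otimes1$. Since $\alb_j\in\Fr^{\otimes d}$, equivariance reduces to $\alb_j\sigma_j(b)=\sigma_j(b)\alb_j$ in $\bbB^{\otimes d}$. This holds because $\bbB^{\otimes d}=\Fr^{\otimes d}[x^{\pm1}_1,\dots,x^{\pm1}_d]$ is commutative in the present (non-skew) setting. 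The sign case is identical with $-\al_j$.

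The main obstacle is the first step, the freeness and the identification of the induced module with $\bbB^{\otimes d}$. If the PBW statement of \cite{LM25} is not available in exactly the form $\bH=\bH_Y\bbB^{\otimes d}$ with freeness, we would instead construct the $\bH$-action on $\bbB^{\otimes d}$ directly: \eqref{def:WMaction} already defines a module by \cite{LNX25}. We would then use the universal property of induction, sending $1\mapsto v^+\otimes1$, to get a surjection $\sph\to\bbB\wr\triv$. Spanning of $\sph$ by the elements $1\otimes b$, obtained from the wreath relations by moving all $H$'s to the left, then makes this map injective.
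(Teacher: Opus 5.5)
Your proposal is correct and follows the paper's route: the paper's proof is the direct comparison of \eqref{def:WMaction} with \eqref{def:Sphaction}. You also make explicit two points the paper leaves implicit. First, the PBW basis (or, in your fallback, spanning plus the universal property of induction) identifies $\triv\otimes_{\bH_Y}\bH$ with $\bbB^{\otimes d}$. Second, the only difference between the two formulas, $\alb_j\sigma_j(b)$ versus $\sigma_j(b)\alb_j$, disappears because $\bbB^{\otimes d}$ is commutative.
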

\begin{rmk}
	The antispherical module of the affine Hecke algebra is fundamental in representation theory. 
	It is isomorphic to the Grothendieck ring of $\Rep_0$, the principal block in the category of rational representations of $\mathbf{G}(k)$ over a field $k$ of characteristic $p$ (or alternatively of Lusztig's quantum group at a $p$-th root of unity), contains combinatorial information about its structure (see \cite{LW22} and reference therein) and models the Iwahori fixed vectors in the Gelfand--Graev module of a $p$-adic group~\cite{CS18}, as well as appears in geometric equivalences involving the two sides~\cite{AB09}.   
	In this paper we essentially prove that the newly constructed antispherical module for the affine Yokonuma algebra models the pro-$p$ Iwahori fixed vectors in the Gelfand--Graev module of $p$-adic $\mathbf{GL}_d$, which leads to the question of what are other interpretations of this module. 
\end{rmk}
\subsection{Pro-$p$ Schur algebras}
Following \cite{LM25}, define the {\em wreath Schur algebra} corresponding to $\bH$ 
\begin{equation}
\bS = \bbS(N,d) := \End_{\bH }(\bbV(N)^{\otimes d}).
\end{equation}
First, we describe an explicit basis of $\bS$ in the spirit of Dipper-James' construction of the $q$-Schur algebra basis $\{e_A~|~A\in \Theta_{N,d}\}$ \cite{DJ89}.
Here,  $\Theta_{N,d}$ is the set of all $N$-by-$N$ matrices whose entries are non-negative integers summing up to $d$. 
For each $\lambda \in \Lambda_{N,d}$, let $x_\lambda = \sum_{w \in \Sigma_\lambda} T_w$.
Recall that one can then identify the permutation module $M^\lambda \in \mod$-$\cH_q(\Sigma_d)$ as $x_\lambda \cH_q(\Sigma_d)$ so that the finite Hecke algebra acts by right multiplication.
Let ${}^\lambda\Sigma^\mu$ (and $\Sigma^\mu$, resp.) be the set of shortest representatives in $\Sigma_\lambda\backslash \Sigma_d$, ($\Sigma_d/\Sigma_\mu$, resp.), and ${}^\lambda\Sigma^\mu := {}^\lambda\Sigma \cap \Sigma^\mu$ be the set of double coset representatives in $\Sigma_\lambda\backslash \Sigma_d / \Sigma_\mu$.

The following facts are well-known, see e.g. \cite{DDPW08}:
\begin{lem}\label{lem:doublecoset}
\begin{enua}
\item There is an identification:
\[
\Theta_{N,d} \cong \{ (\lambda, g, \mu) ~|~ \lambda, \mu \in \Lambda_{N,d}, \ g \in {}^\lambda\Sigma^\mu \},
\qquad
A = (a_{i,j})_{i,j} \mapsto (\lambda, g, \mu),
\]
where $\lambda_i = a_{i,1}+\dots +a_{i,N}$, $\mu_j = a_{1,j} + \dots + a_{N,j}$  count the sum of the $i$th row and column of $A$, respectively, 
and $g$ is obtained from column reading of the set-valued matrix $(\bbZ_i^\lambda \cap g \bbZ_j^\mu)_{i,j}$.
Here, $\bbZ_i^\lambda$ is the integer interval from $\lambda_1 + \dots + \lambda_i +1$ to $\lambda_1 + \dots + \lambda_{i+1}$. 
\item There is a unique strong composition $\delta = \delta(A) \in \Lambda_{n', d}$ for some $n'$ such that
$\Sigma_{\delta} = g^{-1} \Sigma_\lambda g \cap \Sigma_\mu$.
Moreover, $\delta$ is obtained by column reading of nonzero entries of $A$.
\item Write $\delta = \delta(A)$. 
Denote the longest element in $\Sigma_\lambda$ by $w_\circ^\lambda$.
Then, 
$\Sigma_\lambda g \Sigma_\mu = \{ w ~|~ g \leq w \leq w_\circ^A \}$, in which the longest element is
$w_\circ^A = w_\circ^\lambda g w_\circ'$, where $w_\circ' = w_\circ^{\delta} w_\circ^\mu$ with $\ell(w_\circ') =  \ell(w_\circ^\mu)- \ell(w_\circ^{\delta})$.
In other words, the map $\kappa:\Sigma_\lambda \times ({}^{\delta}\Sigma_\mu) \to \Sigma_\lambda g \Sigma_\mu$, $(g_1,g_2)\mapsto g_1gg_2$ is a bijection satisfying $\ell(g_1gg_2) = \ell(g_1) + \ell(g) + \ell(g_2)$.
\end{enua}
\end{lem}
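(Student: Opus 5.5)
The plan is to prove the three parts in order, working throughout with the row-tabloid picture. Identify $\Sigma_d$ with bijections of $\{1,\dots,d\}$, acting on positions. For $\lambda\in\Lambda_{N,d}$, let $\bbZ^\lambda_1,\dots,\bbZ^\lambda_N$ be the consecutive integer intervals of lengths $\lambda_1,\dots,\lambda_N$. Then $\Sigma_\lambda$ is the stabilizer of this set partition.

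For part (a), to each $w\in\Sigma_d$ attach the matrix
\[
A(w) := \bigl(|\bbZ^\lambda_i\cap w\bbZ^\mu_j|\bigr)_{i,j}.
\]
Its row sums are $\lambda_i$ and its column sums are $\mu_j$. Replacing $w$ by $u w v$ with $u\in\Sigma_\lambda$, $v\in\Sigma_\mu$ does not change $A(w)$. Conversely, if $A(w)=A(w')$, I will choose $u\in\Sigma_\lambda$ carrying each block $\bbZ^\lambda_i\cap w\bbZ^\mu_j$ bijectively onto $\bbZ^\lambda_i\cap w'\bbZ^\mu_j$. After this, $w'^{-1}uw$ preserves every $\bbZ^\mu_j$, so it lies in $\Sigma_\mu$. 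This shows $w\mapsto A(w)$ is injective on double cosets.

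Surjectivity will come from an explicit construction. Given $A$, fill each interval $\bbZ^\lambda_i$ from left to right with consecutive runs of sizes $a_{i,1},\dots,a_{i,N}$. Define $g$ to send the elements of $\bbZ^\mu_j$, in increasing order, to the $j$-th runs read down the rows $i=1,\dots,N$; this is the ``column reading''. The resulting $g$ is increasing on each $\bbZ^\mu_j$, and $g^{-1}$ is increasing on each $\bbZ^\lambda_i$. By the standard length criterion ($\ell(gs)>\ell(g)$ iff $g(k)<g(k+1)$), these two facts say exactly that $g\in{}^\lambda\Sigma\cap\Sigma^\mu$. Uniqueness of the distinguished representative in each double coset is the usual fact, which I would cite from \cite{DDPW08}.

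For part (b), conjugation gives
\[
g^{-1}\Sigma_\lambda g\cap\Sigma_\mu=\{v\in\Sigma_\mu : v \text{ preserves each } g^{-1}\bbZ^\lambda_i\}.
\]
So it is the stabilizer of the refined partition $\{g^{-1}\bbZ^\lambda_i\cap\bbZ^\mu_j\}$. Because $g$ is increasing on each $\bbZ^\mu_j$ and $g^{-1}$ is increasing on each $\bbZ^\lambda_i$, each nonempty piece is an integer interval. Inside $\bbZ^\mu_j$ these pieces occur in the order $i=1,2,\dots$ with lengths $a_{1j},a_{2j},\dots$. Hence the subgroup is the standard parabolic $\Sigma_\delta$, where $\delta$ lists the nonzero entries of $A$ in column-reading order. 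Uniqueness of $\delta$ holds because a standard parabolic determines its strong composition.

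For part (c), the key step is the factorization $\kappa$. Every $w\in\Sigma_\lambda g\Sigma_\mu$ has the form $g_1 g v$. Writing $v=v_1 g_2$ with $v_1\in\Sigma_\delta$ and $g_2\in{}^\delta\Sigma_\mu$, and noting $g v_1 g^{-1}\in\Sigma_\lambda$ by (b), we can absorb $v_1$ into $g_1$. This gives existence. Uniqueness follows by counting:
\[
|\Sigma_\lambda g\Sigma_\mu|=\frac{|\Sigma_\lambda||\Sigma_\mu|}{|\Sigma_\delta|}=|\Sigma_\lambda|\cdot|{}^\delta\Sigma_\mu|.
\]

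Length additivity is the step I expect to be the main obstacle. I would prove it via inversions, using $\ell(w)=\#\{k<l: w(k)>w(l)\}$ and classifying inverted pairs of $g_1gg_2$ into three types:
\begin{itemize}
\item pairs lying in a common $\bbZ^\mu_j$ that $g_2$ inverts, which contribute $\ell(g_2)$; here one uses that $g$ is increasing on $\bbZ^\mu_j$ and that $g_1$ preserves the relative order of pairs in the same run only up to $\Sigma_\delta$-twisting, and minimality of $g_2$ handles this twisting;
\item pairs whose images land in a common $\bbZ^\lambda_i$, which contribute $\ell(g_1)$;
\item the remaining pairs, which are exactly the inversions of $g$.
\end{itemize}
If this bookkeeping becomes messy, the fallback is the standard Deodhar/Kilmoyer lemma from \cite{DDPW08}: $\ell(xy)=\ell(x)+\ell(y)$ for $x\in\Sigma_\lambda$ and $y\in{}^\lambda\Sigma$, applied twice, the second time to $gg_2$ with $g_2\in{}^\delta\Sigma_\mu$.

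Granting additivity, the length is maximized by $g_1=w_\circ^\lambda$ and $g_2$ the longest element of ${}^\delta\Sigma_\mu$, which is $w_\circ^\delta w_\circ^\mu$ with length $\ell(w_\circ^\mu)-\ell(w_\circ^\delta)$. This gives the stated formula for $w_\circ^A$. Finally, the interval description $\{w: g\le w\le w_\circ^A\}$ follows from the standard fact that a parabolic double coset is a Bruhat interval between its minimal and maximal elements.
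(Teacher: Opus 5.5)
Your proposal is correct. The main difference from the paper is that the paper proves nothing here: it records (a)--(c) as well-known and cites \cite{DDPW08}.

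You instead reconstruct the standard argument in a self-contained way:
\begin{itemize}
\item the matrix/double-coset bijection via the partitions $\{\bbZ^\lambda_i\cap w\bbZ^\mu_j\}$;
\item an explicit distinguished representative built by column reading, which reproduces the paper's example $g=s_2s_3$ for $A=\left(\begin{smallmatrix}1&1\\2&0\end{smallmatrix}\right)$;
\item Kilmoyer's description of $g^{-1}\Sigma_\lambda g\cap\Sigma_\mu$ as the stabilizer of an interval partition read off column by column;
\item the Deodhar factorization, with bijectivity obtained by counting.
\end{itemize}
What this buys is that the conventions the paper uses later are checked directly rather than imported: column reading for $g$ and $\delta(A)$, ${}^\delta\Sigma_\mu$ as minimal representatives of $\Sigma_\delta\backslash\Sigma_\mu$, and the formula for $w_\circ^A$. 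You also use the correct intervals $\bbZ^\lambda_i=\{\lambda_1+\dots+\lambda_{i-1}+1,\dots,\lambda_1+\dots+\lambda_i\}$, whereas the statement as printed is off by one index. The Bruhat-interval description remains a citation in both versions.

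One step in (c) should be stated rather than left implicit. In your fallback, the first use of $\ell(xy)=\ell(x)+\ell(y)$ (for $x\in\Sigma_\lambda$, $y\in{}^\lambda\Sigma$) requires $gg_2\in{}^\lambda\Sigma$. This is exactly where $g_2\in{}^\delta\Sigma_\mu$ is used. Take $p<p'$ in $\bbZ^\lambda_i$; then $g^{-1}(p)<g^{-1}(p')$.
\begin{itemize}
\item If these lie in different $\mu$-blocks, $g_2^{-1}\in\Sigma_\mu$ keeps their order.
\item If they lie in the same $\bbZ^\mu_j$, they lie in the single $\delta$-block $g^{-1}\bbZ^\lambda_i\cap\bbZ^\mu_j$. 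On that block $g_2^{-1}$ is increasing, because $g_2$ is minimal in $\Sigma_\delta g_2$.
\end{itemize}
The other additivity, $\ell(gg_2)=\ell(g)+\ell(g_2)$, is the mirror statement for $g\in\Sigma^\mu$, $g_2\in\Sigma_\mu$ and needs nothing about $\delta$. Once you add these two lines, the somewhat loose three-type inversion count is unnecessary.
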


\begin{expl}
If $A=\left(\begin{smallmatrix} 1&1 \\  2&0 \end{smallmatrix}\right)$, then 
$\delta(A)$ is obtained from $(a_{11}, a_{21}, a_{12}, a_{22})$ by removing the zeroes, and hence $\delta(A) = (1,2,1)$. 
The row sum and column sum vectors of $A$ are $\lambda = (2,2)$ and $\mu =(3,1)$, respectively.
Then, $A \cong ((2,2), g, (3,1))$ with $g = |1 3 4 2| = s_2 s_3$, since
$\bbZ_1^\lambda = \{1,2\}, \bbZ_2^\lambda = \{3,4\}, g\bbZ_2^\mu = \{1,3,4\}$ and $g\bbZ_2^\mu = \{2\}$.
The longest element is $w_\circ^A = (s_1s_3) (s_2s_3) (s_2) (s_2s_1 s_2) = s_1s_3 s_2s_3 s_1 s_2.$
\end{expl}
From now on, let $A \cong (\lambda, g,\mu)$, $\delta = \delta(A)$. 
Set $x^\delta_\mu := \sum\nolimits_{w\in {}^\delta\Sigma_\mu} T_w $. 
Each element in the Dipper-James basis is the right $\cH_q(\Sigma_d)$-module homomorphism determined by 
\begin{equation}
e_A: x_\mu \cH_q(\Sigma_d) \to x_\lambda \cH_q(\Sigma_d),
\qquad
e_A(x_\mu) = x_\lambda T_g x_\mu^\delta.
\end{equation}
Now, we define the following elements in $\bH$ that play the same roles as $x_\lambda, x^\delta_\mu \in \cH_q(\Sigma_d)$:
\begin{equation}\label{def:yl}
    y_\lambda := \sum\nolimits_{w\in \Sigma_\lambda} H_w \al_{w w_\circ^\lambda},
\qquad
y^\delta_\mu := \sum\nolimits_{w\in {}^\delta\Sigma_\mu} H_w \al_{w w'_\circ},
\end{equation}
where 
$\al_w = 
\al_{i_n} \sigma_{i_n}(\al_{i_{n-1}})\dots (\sigma_{i_n} \dots\sigma_{i_2})(\al_{i_1})$ is well-defined (see \cite[Lemma 5.10]{LM25}) for any reduced expression  $w = s_{i_1} \dots s_{i_n} \in \Sigma_d$.
For $\bS$, for any partially symmetric polynomial $P \in (\bbB^{\otimes d})^{\Sigma_{\delta}}$, define a map
\begin{equation}
\theta_{A,P}: y_\mu \bH \to y_\lambda \bH,
\qquad
\theta_{A,P}(y_\mu) = y_\lambda H_g P y_\mu^\delta.
\end{equation}
Thanks to \cite[Lemma 6.2]{LM25}, $y_\lambda H_i = y_\lambda \alb_i$ for any $s_i \in \Sigma_\lambda$. 
Therefore, for any $h \in \bH$, one can write
$y_\lambda h = \sum_{\eta \in {}^\lambda\Sigma} y_\lambda H_\eta b_{\eta}$ for some $b_{\eta} = b_\eta(h) \in \bbB^{\otimes d}$.
It then follows from \cite[Proposition 7.4]{LM25} that each $y_\lambda \bH$ is identified with a permutation module via
\begin{equation}\label{eq:yHM}
y_\lambda \bH \cong M^\lambda,
\qquad
y_\lambda h \mapsto \sum\nolimits_{\eta \in {}^\lambda\Sigma} H_\eta \otimes b_{\eta}(h).
\end{equation}
Therefore, $\theta_{A,P}: M^\mu \to M^\lambda$ can be regarded as an element in $\bS = \bigoplus_{\lambda, \mu \in \Lambda_{n,d}} \Hom(M^\mu, M^\lambda)$.
\begin{expl} \label{ex:yA}
Suppose $A=\left(\begin{smallmatrix} 1&1 \\  2&0 \end{smallmatrix}\right)$ with $\lambda = (2,2)$, $\mu = (3,1)$, $\delta = \delta(A) = (1,2,1)$,  $g = s_2s_3$.
Then
\[
\Sigma_\lambda = \langle s_1, s_3\rangle, 
\quad 
{}^{\delta}\Sigma_\mu = \{ g'\in \langle s_1, s_2\rangle ~|~ s_2g' > g'\} = \{1, s_1, s_1s_2\},
\]
where the longest element is $w_\circ' = s_1s_2 = w_\circ^{\delta}w_\circ^\mu$.
Since $H_g = H_2H_3$, 
$y_\mu^\delta = (H_1H_2 + H_1 \al_2 + \al_{s_1 s_2})$.
Let $f\in \Fr$, $P := f_1x_2x_3 \in (\bbB^{\otimes d})^{\Sigma_{(1,2,1)}}$. 
The most crucial step to obtain $\theta_{A,P}(y_\mu)$ is to express 
$y_\lambda H_g P y_\mu^\delta$ as a sum of elements of the form $y_\lambda H_\eta b_\eta$ for some $\eta \in {}^\lambda\Sigma$, $b_\eta \in \bbB^{\otimes d}$. 
We have
\[
{}^\lambda\Sigma = \{g'\in \Sigma_3 ~|~ s_1g' > g', s_3g' > g'\}= \{1, s_2, s_2s_1, s_2s_3, s_2s_1s_3, s_2s_3s_1s_2\}.
\]
That is, we apply the wreath relations many times to arrive at the following:
\begin{align*}
y_\lambda H_g P y_\mu^\delta
&=
y_\lambda H_2H_3 f_1x_2x_3 (H_1H_2 + H_1 \al_2 + \al_{s_1 s_2})
\\
  & = y_\lambda H_2H_3H_1H_2  x_1x_2f_3 
  + y_\lambda H_2H_3H_1  x_1f_2(x_3\al_2 - S_2x_2) 
  \\
  &+ y_\lambda H_2H_3  f_1(\sigma_1(S_{2})S_{2}x_1x_2 
+ \al_{s_1s_2}x_2x_3 
  - S_1\al_2 x_1x_3 
  - x_2\alb_2 \sigma_2(S_1)x_1),  
\end{align*}
and hence 
$\theta_{A,P}:M^\mu \to M^\lambda$ is given by
$\theta_{A,P}(1\otimes 1) = \sum_{\eta \in \{s_2s_3s_1s_2, s_2s_3s_1, s_2s_3\}} H_\eta \otimes b_\eta$, where
\[
\begin{split}
b_{s_2s_3s_1s_2} = x_1x_2 f_3,
\quad
b_{s_2s_3s_1} = x_1f_2(x_3\al_2 - S_2x_2),
\quad
\\
b_{s_2s_3} = f_1(\sigma_1(S_{2})S_{2}x_1x_2 
+ \al_{s_1s_2}x_2x_3 
  - S_1\al_2 x_1x_3 
  - x_2\alb_2 \sigma_2(S_1)x_1).
\end{split}
\]
\end{expl}

\begin{prop}\label{prop:mult}
\begin{enua}
\item
The set $\{\theta_{A,P} ~|~ A\in \Theta_{n,d}, P \in (\bbB^{\otimes d})^{\Sigma_{\delta(A)}}\}$ forms a basis of $\bS$.
\item
The multiplication can be obtained explicitly.
Suppose $A \cong (\mu, g, \nu)$ and $A' \cong (\lambda, w, \mu)$. Then,
\[
\theta_{A',P'} \theta_{A,P} = \sum\nolimits_{\eta \in {}^\lambda\Sigma} \theta_{(\lambda, \eta, \nu), P_\eta} c_\eta,
\]
where 
$P_\eta \in (\bbB^{\otimes d})^{\Sigma_{\delta(\lambda, \eta, \nu)}}$ and $c_\eta \in B^{\otimes d}$ are coefficients in the following:
\[
y_\lambda H_w P' y_\mu^{\delta(B)} H_g P y_\nu^{\delta(A)} = \sum_{\eta \in {}^\lambda\Sigma} y_\lambda H_\eta P_\eta y_\nu^{\delta(\lambda, \eta, \nu)} c_\eta.
\]
\end{enua}
\end{prop}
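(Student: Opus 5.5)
The plan is to reduce everything to the structure of $\Hom_{\bH}(M^\mu,M^\lambda)$ via the identification \eqref{eq:yHM}, $M^\mu\cong y_\mu\bH$. Since $\bS=\bigoplus_{\lambda,\mu}\Hom_{\bH}(M^\mu,M^\lambda)$, it suffices to fix $\lambda,\mu\in\Lambda_{N,d}$ and show that $\{\theta_{A,P}\}$, with $A\cong(\lambda,g,\mu)$ ranging over $g\in{}^\lambda\Sigma^\mu$ and $P\in(\bbB^{\otimes d})^{\Sigma_{\delta(A)}}$ ranging over a $\CC$-basis, is a basis of this Hom space. By Proposition \ref{prop:MlambdaB}, $M^\mu$ is induced from the parabolic subalgebra $\bbB\wr\cH(\mu)$, starting from the wreath module $\boxtimes_i(M_i\wr S_Y^{(\mu_i)})$, which is generated by $v_\mu$ subject to $v_\mu H_j=v_\mu\alb_j$ for $s_j\in\Sigma_\mu$ and a free $\bbB^{\otimes d}$-action. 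Frobenius reciprocity then identifies $\Hom_{\bH}(M^\mu,M^\lambda)$ with the space of vectors $m\in M^\lambda$ satisfying $mH_j=m\alb_j$ for all $s_j\in\Sigma_\mu$.

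For part (a) I would proceed in three steps, which are the polynomial analogue of the Dipper--James argument, following \cite[\S7]{LM25}.

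\textbf{Step 1 (well-definedness).} Show that $m=y_\lambda H_gPy^\delta_\mu$ satisfies $mH_j=m\alb_j$ for $s_j\in\Sigma_\mu$. Using Lemma \ref{lem:doublecoset}(c), one rewrites $y_\lambda H_g P y_\mu^\delta$ so that the sum over $\Sigma_\lambda g\Sigma_\mu$ telescopes into a full right-symmetrizer. The $\Sigma_\delta$-invariance of $P$ is exactly what allows $P$ to pass through $H_{w}$ for $w\in\Sigma_\delta=g^{-1}\Sigma_\lambda g\cap\Sigma_\mu$, since there $\rho_j(P)=0$ and $\sigma_j(P)=P$. The twisting factors $\al_{ww'_\circ}$ are handled by \cite[Lemma 5.10, Lemma 6.2]{LM25}.

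\textbf{Step 2 (spanning).} Expand $M^\lambda$ in the basis $\{H_\eta\otimes b\}$ with $\eta\in{}^\lambda\Sigma$. A vector $m$ satisfying the $\Sigma_\mu$-eigen-condition decomposes according to the double cosets $\Sigma_\lambda g\Sigma_\mu$, by a Bruhat-order triangularity argument using the quadratic and wreath relations. The leading coefficient of $m$ at the shortest element $g$ of a double coset must be $\Sigma_\delta$-invariant. Subtracting $\theta_{A,P}$ with $P$ equal to that leading coefficient strictly lowers the support, and induction on the support finishes the argument.

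\textbf{Step 3 (independence).} The leading term of $\theta_{A,P}(y_\mu)$ is $H_g\otimes P$ modulo longer or other-coset terms, and these leading terms are independent because $M^\lambda$ is free over $\bbB^{\otimes d}$ on $\{H_\eta\}$ (the PBW-type basis from \cite{LM25}).

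For part (b), since $\theta_{A',P'}\theta_{A,P}(y_\nu)=\theta_{A',P'}(y_\mu)\cdot H_gPy^\delta_\nu = y_\lambda H_wP'y^{\delta(A')}_\mu H_gPy_\nu^{\delta(A)}$, the image lies in $\Hom(M^\nu,M^\lambda)$. By part (a) it is a combination of $\theta_{(\lambda,\eta,\nu),P_\eta}$, and the coefficients are obtained by running the straightening algorithm of Step 2 on this element, which gives the displayed identity. The main obstacle is Step 2: controlling the lower-order terms produced by $\rho_j$ and by the non-scalar $\al,\alb$ so that the triangularity is genuinely upper-triangular. Since $\Fr^{\otimes d}$ is commutative and $\rho_j(\Fr^{\otimes d})=0$, I expect this to go through exactly as in \cite[Proposition 7.4]{LM25}.
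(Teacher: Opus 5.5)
You follow the same route as the paper, which proves (a) simply by citing \cite[Proposition 7.5]{LM25} (a Dipper--James-type argument) and proves (b) by re-expanding the composite via \eqref{eq:yHM}. Your treatment of (b) is fine. The gap is in Steps 2--3: your triangularity is anchored at the wrong end of the double coset. When you move $P$ past $H_w$, $w\in{}^\delta\Sigma_\mu$, using $bH_j=H_j\sigma_j(b)-\rho_j\sigma_j(b)$, the $\rho_j$-term has lost an $H_j$. It therefore lands on \emph{shorter} elements, in particular on $g$ itself. So the coefficient of $\theta_{A,P}(y_\mu)$ at the minimal element $g$ is $P\al_{w'_\circ}$ plus correction terms, not $P$. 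See $b_{s_2s_3}$ in Example~\ref{ex:yA}; already for $m=1$ (where $\al=1$, $\alb=q$) and $P=x_2x_3$, this coefficient is $x_2x_3-(q-1)(x_1x_2+x_1x_3)$. Consequently, subtracting $\theta_{A,P}$ with $P$ equal to the $g$-coefficient of $m$ does not remove that coefficient. Likewise, the Step 3 claim that the leading term is $H_g\otimes P$ modulo longer terms is false. As written, neither spanning nor independence follows. Your assertion that the component of an eigenvector on a double coset is determined by its $g$-coefficient, and that this coefficient is $\Sigma_\delta$-invariant, happens to be true. To use it, however, you would also need $P\mapsto(\text{$g$-coefficient of }\theta_{A,P}(y_\mu))$ to be a bijection of $(\bbB^{\otimes d})^{\Sigma_\delta}$, and that is not automatic.

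The fix is to work at the top of the double coset. Let $\tau=gw'_\circ$ be the longest element of ${}^\lambda\Sigma\cap\Sigma_\lambda g\Sigma_\mu$. All $\rho$-terms go strictly down, as do all terms produced by $y_\lambda H_u\in y_\lambda\Fr^{\otimes d}$ for $u\in\Sigma_\lambda$. Hence the $\tau$-coefficient of $\theta_{A,P}(y_\mu)$ is just the place permutation of $P$ by $w'_\circ$; for example, $b_{s_2s_3s_1s_2}=x_1x_2f_3$ for $P=f_1x_2x_3$ in Example~\ref{ex:yA}. The $H_j$ with $s_j\in\Sigma_\mu$ preserve the double-coset decomposition of $M^\lambda$, so you may treat one coset at a time. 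Write an eigenvector's component there as $m=\sum_{w\in{}^\delta\Sigma_\mu}y_\lambda H_{gw}b_w$. Comparing coefficients of $gws_j$ in $mH_j=m\alb_j$, for $w<ws_j\in{}^\delta\Sigma_\mu$, gives $\sigma_j(b_w)=b_{ws_j}\alb_j-S_j\sigma_j(b_{ws_j})+\rho_j\sigma_j(b_{ws_j})$. Since every element of ${}^\delta\Sigma_\mu$ is a prefix of $w'_\circ$, the component is determined by $b_\tau$. For $s_j\in\Sigma_\mu$ with $\tau s_j=s_i\tau$, the $\tau$-coefficient gives $(b_\tau-\sigma_j(b_\tau))\bigl(\sigma_j(\al_j)x_j-\alb_jx_{j+1}\bigr)=0$. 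The second factor is $-\sigma_j$ applied to the element $\mathcal{P}=\al(x_1-x_2)+\beta$ in position $j$, which is a non-zero-divisor (as used in the proof of Proposition~\ref{prop:KMSaction}). Hence $b_\tau$ is invariant under $\tau^{-1}\Sigma_\lambda\tau\cap\Sigma_\mu=w_\circ'^{-1}\Sigma_\delta w'_\circ$. This non-zero-divisor input is the real content behind your unproved ``must be $\Sigma_\delta$-invariant''. Subtracting $\theta_{A,P}$, with $P$ the corresponding permutation of $b_\tau$, then kills the component, which gives spanning. Linear independence can be read off directly from the $\tau$-coefficients.
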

\begin{proof}
Part (a) is a special case of \cite[Proposition 7.5]{LM25}.
Part (b) is a direct consequence of the identification \eqref{eq:yHM}.
\end{proof}
We remark that the multiplication formulas in \cite{BLM90, DF15} for finite and affine Schur algebras are special cases of Proposition~\ref{prop:mult} when $\Fr = \bbC$, $P = P' = 1$, $w = 1$, and an assumption that $\lambda$ only differs slightly from $\mu$.
We emphasize it is impractical to expect a closed formula for an arbitrary $c_\eta$ in \cref{prop:mult}(b), as it is not available even for the finite $q$-Schur algebra.

\section{Skew polynomial quantum wreath products} \label{sec:SPQWP}
While the metaplectic Hecke algebra $\HGbI$ can be realized as a polynomial quantum wreath product since it is isomorphic to the affine Hecke algebra \cite{S04,McN12}, 
it turns out that the pro-$p$ metaplectic Hecke algebra $\HGbIp$ has a more complicated quantum wreath product structure with the base algebra being a ring of skew polynomials.
\subsection{Skew Polynomial Quantum Wreath Products}
Recall that $\Fr := \bbC[C_m] = \tfrac{\bbC[t]}{(t^m-1)}$.
Fix  $n \in \ZZ_{\geq 1}$ such that $n|m$, and let $\bar{n}:= n/\textup{gcd}(n,2)$.
Denote the corresponding ring of skew Laurent polynomials by
\[
\bbA :=  \Fr \rtimes_n \CC[x^{\pm1}],
\quad
\textup{in which}
\quad
x^{\pm1} f = \psi^{\pm1}(f) x^{\pm1},
\]
where $\psi(f) := \xi^2 f$ for some fixed primitive $n$th root of unity $\xi \in \CC$. 

For any element $\Delta = \sum_{j} f'_j \otimes f''_j \in \Fr\otimes \Fr$, define
\begin{equation}\label{S'1}
    \Delta^{(i)} := \sum\nolimits_{j} \psi^i(f'_j) \otimes f''_j \in \Fr\otimes \Fr \subseteq \bbA \otimes \bbA ,
    \quad
    \textup{for all}
    \quad i\in\mathbb{Z},
\end{equation}
and hence $\Delta = \Delta^{(n)}$.
\begin{lem}\label{lem:SxxS}
Recall that $e = \frac{1}{m}\sum_{j=1}^m t^j\otimes t^{-j}$. 
Then, $e x_2 = x_2 e^{(1)}$ and $ex_1 = x_1 e^{(-1)}$. In particular,
\[
S x_2 = x_2 S^{(1)},
\quad
S x_1 = x_1 S^{(-1)}.
\]
\end{lem}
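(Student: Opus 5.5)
The plan is a direct computation in $\bbA\otimes\bbA$, term by term in the defining sum of $e$. Here $x_1 = x\otimes 1$ and $x_2 = 1\otimes x$. Elements in different tensor factors commute, and within a factor the skew relation $xf = \psi(f)x$ is equivalent to $f x = x\,\psi^{-1}(f)$ for $f\in\Fr$. I will read $\psi$ as the algebra automorphism of $\Fr$ determined by $\psi(t)=\xi^2 t$, so that $\psi^{\pm1}(t^j) = \xi^{\pm 2j}t^j$ for every $j$. This is the only reading under which $\bbA$ is a well-defined ring with the stated relation.

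First, for $x_2$, I move $x$ past $t^{-j}$ in the second factor:
\[
(t^j\otimes t^{-j})(1\otimes x) = t^j\otimes x\,\psi^{-1}(t^{-j}) = (1\otimes x)\bigl(t^j\otimes \xi^{2j}t^{-j}\bigr).
\]
Then I shift the scalar $\xi^{2j}$ onto the first factor, which rewrites the coefficient as $\psi(t^j)\otimes t^{-j}$. Summing over $j$ and dividing by $m$ gives $e x_2 = x_2 e^{(1)}$, using definition \eqref{S'1} with $i=1$.

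Second, for $x_1$, the same manipulation in the first factor gives
\[
(t^j\otimes t^{-j})(x\otimes 1) = \bigl(x\,\psi^{-1}(t^j)\bigr)\otimes t^{-j} = x_1\bigl(\psi^{-1}(t^j)\otimes t^{-j}\bigr).
\]
Summing over $j$ yields $e x_1 = x_1 e^{(-1)}$.

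Finally, I multiply both identities by $q-1$. Since $S=(q-1)e$ and $\Delta\mapsto\Delta^{(i)}$ is $\CC$-linear, this gives $S x_2 = x_2 S^{(1)}$ and $S x_1 = x_1 S^{(-1)}$.

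The only delicate step is the bookkeeping of the scalar in the $x_2$ case. The twist is naturally produced on the second tensor factor, while $e^{(1)}$ is defined by twisting the first. These agree because $\psi$ acts on each monomial $t^j$ by a scalar, and that scalar can move freely across the tensor sign. I will remark that this uses $\psi(t^j)\otimes t^{-j} = t^j\otimes\psi^{-1}(t^{-j})$, which holds for monomials and hence for $e$. It would fail for a general $\Delta$, so the lemma is specific to $e$ (equivalently $S$).
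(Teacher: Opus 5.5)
Your proposal is correct and follows the paper's own argument: a term-by-term commutation of $x$ past $t^{-j}$ (resp.\ $t^{j}$) using the skew relation, then moving the root-of-unity scalar onto the first tensor factor to recognize $e^{(1)}$ (resp.\ $e^{(-1)}$). Your reading of $\psi$ as the automorphism with $\psi(t)=\xi^2 t$ is the one the paper's computation uses implicitly, and scaling by $q-1$ then gives the statements for $S$.
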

\begin{proof}
By definition, one has
\[
e x_2 = \frac{1}{m}\sum_{j=1}^m t^j\otimes (t^{-j} x)
= \frac{1}{m}\sum_{j=1}^m t^j\otimes (x\zeta^{j} t^{-j})
=  \frac{x_2}{m}\sum_{j=1}^m (\zeta t)^j\otimes t^{-j}
= x_2 e^{(1)}.
\]
The verification for $x_1$ is similar, and hence we omit.
\end{proof}
Lemma \ref{lem:SxxS} suggests that $S_i$ does not commute with a Laurent polynomial $P \in \CC[x_1^{\pm1}, \dots, x_d^{\pm1}]$ in general, in contrast to the special case when $n=1$.

Next, let $\rho \in \End_\bbC(\bbA \otimes \bbA )$ be the map uniquely determined by
\begin{align}
\label{Leib0}
\rho(x_1) = Sx_1, 
\quad
\rho(\Delta) = 0
\quad
\textup{for all}
\quad\Delta \in \Fr\otimes \Fr,
\\
\label{Leib}
\rho(ab) = \sigma(a)\rho(b) + \rho(a)b
\quad\textup{for all}
\quad
a, b \in \bbA .
\end{align}
\begin{lem}
Let $\rho$ be the map defined in \eqref{Leib0}--\eqref{Leib}.
\begin{enua}
    \item $\rho(x_1^{-1}) = -x_2^{-1} S$.    
    \item 
Suppose that $R = q(t^k \otimes t^k)$ for $k \in \{0, m/2\}$.
Then, $\rho(x_2) = -x_1 S$ and $\rho(x_2^{-1}) = Sx_2^{-1}$. 
\end{enua}
\end{lem}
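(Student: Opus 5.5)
For part (a) I would only use the twisted Leibniz rule \eqref{Leib} together with $\rho(1)=0$; the latter holds by \eqref{Leib0} because $1 = 1\otimes 1 \in \Fr\otimes\Fr$. Apply $\rho$ to $1 = x_1x_1^{-1}$ and use $\sigma(x_1)=x_2$ to get
\[
0=\rho(x_1x_1^{-1}) = \sigma(x_1)\rho(x_1^{-1}) + \rho(x_1)x_1^{-1} = x_2\rho(x_1^{-1}) + Sx_1x_1^{-1} = x_2\rho(x_1^{-1}) + S .
\]
Multiplying on the left by $x_2^{-1}$ gives $\rho(x_1^{-1}) = -x_2^{-1}S$.

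For part (b), the value $\rho(x_2)$ is not given directly by \eqref{Leib0}. I would pin it down by requiring the wreath relation $H b = \sigma(b)H + \rho(b)$ to be compatible with the quadratic relation $H^2 = SH+R$ in $\bbA\wr\cH(2)$, with $H:=H_1$.

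First, expand $H^2x_1$ by applying the wreath relation twice. This gives $H(x_2H+Sx_1) = (x_1H+\rho(x_2))H + H(Sx_1)$. For the last term use $\sigma(S)=S$ and $\rho(S)=0$, so that $H(Sx_1)=\sigma(Sx_1)H+\rho(Sx_1) = Sx_2H + S\rho(x_1) = Sx_2H + S^2x_1$. Substituting $H^2=SH+R$ in the term $x_1H^2$ then yields
\[
H^2x_1 = \bigl(x_1S + \rho(x_2) + Sx_2\bigr)H + x_1R + S^2x_1 .
\]

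Second, expand the same element using the quadratic relation first: $H^2x_1 = (SH+R)x_1 = Sx_2H + S^2x_1 + Rx_1$.

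Third, compare the two expressions. Equating the coefficients of $H$ forces $\rho(x_2) = -x_1S$. The constant terms agree exactly when $Rx_1 = x_1R$. This is where the hypothesis $k\in\{0,m/2\}$ enters: since $x_1 t_1^k = \xi^{2k}t_1^kx_1$ and $\xi^{2k}=1$ for these $k$ (using $n\mid m$), the element $R = q(t^k\otimes t^k)$ commutes with $x_1$.

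As a sanity check I would also confirm that this value is consistent with the other relations in $\bbA\otimes\bbA$. For $\rho(x_1x_2)=\rho(x_2x_1)$ this reduces to $Sx_1x_2 = x_1x_2S$, which follows from \cref{lem:SxxS}. For $\rho(x_2f_2) = \rho(\psi(f)_2x_2)$ it reduces to $x_1Sf_2 = \psi(f)_1x_1S$, which follows from \cref{lem:afc}(a).

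Finally, $\rho(x_2^{-1})$ follows as in (a). From $0=\rho(x_2x_2^{-1}) = x_1\rho(x_2^{-1}) + \rho(x_2)x_2^{-1} = x_1\rho(x_2^{-1}) - x_1Sx_2^{-1}$ we get $\rho(x_2^{-1}) = Sx_2^{-1}$.

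The main obstacle is justifying that $\rho(x_2)$ is forced by quadratic/wreath compatibility rather than being an independent datum. I would invoke the compatibility conditions on $(S,R,\sigma,\rho)$ from the quantum wreath product framework of \cite{LNX24}, and then carry out the coefficient comparison above carefully, keeping track of the skew commutation of $x_i$ with $\Fr^{\otimes 2}$.
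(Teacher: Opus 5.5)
Your proposal is correct at the same level of rigor as the paper. Part (a) matches the paper, while for $\rho(x_2)$ you take a different route.

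\textbf{Part (a).} The paper multiplies the wreath relation $Hx_1 = x_2H + Sx_1$ by $x_2^{-1}$ on the left and $x_1^{-1}$ on the right. That is the same computation as your twisted Leibniz rule applied to $x_1x_1^{-1}=1$. Your version has the small advantage of never leaving $\bbA\otimes\bbA$.

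\textbf{Part (b): comparison of routes.} The paper uses $H(H-S)=(H-S)H=R$ to rewrite the wreath relation as $x_2H = RH^{-1}x_1$. It then conjugates by $H$, using $HR=RH$ and $Rx_1=x_1R$, to obtain $Hx_2 = x_1H - x_1S$. You instead expand $H^2x_1$ in two ways, which amounts to evaluating the two identities $r_S+\rho\sigma+\sigma\rho=l_S\sigma$ and $r_R+\rho^2=l_S\rho+l_R$ of (P4) at $x_1$. This avoids inverting $H$ and separates the roles cleanly: the $H$-coefficient gives $\rho(x_2)$, and the hypothesis on $R$ is needed only to match the constant terms.

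Both arguments share one tacit step. Passing from an identity in $\bbA\wr\cH(2)$ to one in $\bbA\otimes\bbA$ requires $1,H$ to be independent over $\bbA^{\otimes 2}$, i.e.\ the PBW property of Theorem \ref{thm:SPQWPPBW}. That theorem is proved only later, and its check of (P4) on $x_2P$ already uses $\rho(x_2)$.

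\textbf{Removing the obstacle you flag.} Promote your sanity check to the actual proof. Applying \eqref{Leib} to both sides of $x_1x_2=x_2x_1$ shows that $z=\rho(x_2)$ satisfies $x_2z - zx_1 = x_1Sx_1 - Sx_1x_2$.
\begin{itemize}
\item By Lemma \ref{lem:SxxS}, $Sx_1x_2=x_1x_2S$, so $z=-x_1S$ is a solution.
\item It is the only one. Suppose $y=\sum f_{a,b}x_1^ax_2^b\neq 0$, with $f_{a,b}\in\Fr\otimes\Fr$, satisfied $x_2y=yx_1$. Let $a_0$ be the largest $x_1$-exponent occurring in $y$. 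Then $yx_1$ has nonzero terms of $x_1$-degree $a_0+1$, whereas $x_2y$ has none.
\end{itemize}
Hence $\rho(x_2)=-x_1S$, and therefore $\rho(x_2^{-1})=Sx_2^{-1}$, follow from \eqref{Leib0}--\eqref{Leib} alone. This uses neither the algebra $\bbA\wr\cH(2)$ nor the hypothesis on $R$. That hypothesis is only what makes this $\rho$ compatible with the quadratic relation.
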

\begin{proof}
Since $\rho(x_1) = Sx_1$, the wreath relation for $b= x_1$ reads
$H x_1 = x_2 H + S x_1$.
Part (a) can be deduced by left multiplying $x_2^{-1}$ and then right multiplying $x_1^{-1}$ to this relation.

For part (b), the wreath relation can be rewritten as
\[
x_2 H = (H-S) x_1 = R H^{-1} x_1.
\]
Note first $R = \sigma(R)$ and $\rho(R) = 0$ and hence $H$ commutes with $R$.
Then, the assumption on $R$ ensures that $R x_1 = \xi^{2k} x_1 R = x_1$ because $n|2k$. 
Therefore,
\[
H x_2 = H(x_2 H)H^{-1} = H H^{-1}  x_1 (H-S) = x_1 H - x_1 S,
\]
which is, $\rho(x_2) = -x_1S$. 
Part (b) can then be proved similar to the argument in part (a).
\end{proof}
It turns out that the $\rho$ map for the skew polynomial quantum wreath product is no longer a divided difference like the Demazure operators.
Properties of $\rho$ can be summarized as below:
\begin{lem}\label{lem:SkewDemazureFacts}
\begin{enua}
\item Let $P = x_i^{a} x_{i+1}^b \in \bbC[x^{\pm1}]^{\otimes d}$. Then,
\[
\rho_i(P) = \begin{cases}
\displaystyle
(x_ix_{i+1})^b\sum_{\ell=0}^{k-1} x_{i+1}^{\ell} S_i x_i^{k-\ell}
&\textup{if }a=b+k \geq b;
\\
\displaystyle

-(x_ix_{i+1})^a\sum_{\ell=0}^{k-1} x_{i}^{k-\ell} S_i x_{i+1}^{\ell}
&\textup{if }b=a+k \geq a.
\end{cases}
\]
\item Let $f \in \Fr^{\otimes d}$ and $P \in \bbC[x_1, \dots, x_d]$. Then, 
$\rho_i( fP ) = \sigma_i(f) \rho_i(P)$. 
\end{enua}
\end{lem}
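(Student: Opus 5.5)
We argue by induction using the twisted Leibniz rule \eqref{Leib} together with the normalization \eqref{Leib0}, working locally in $\bbA\otimes\bbA$ (i.e. $d=2$, $i=1$), since $\rho_i$ only touches the $i$th and $(i+1)$st tensor factors.

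\emph{Part (b).} For $f\in\Fr^{\otimes d}$ and any $P$, \eqref{Leib} gives $\rho_i(fP)=\sigma_i(f)\rho_i(P)+\rho_i(f)P$. Since $\rho_i$ vanishes on $\Fr\otimes\Fr$ (and the other tensor factors are untouched), the second term drops out and $\rho_i(fP)=\sigma_i(f)\rho_i(P)$. This is immediate once we note that $\Fr^{\otimes d}$ reduces to elements of $\Fr\otimes\Fr$ in positions $i,i+1$ tensored with scalars elsewhere.

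\emph{Part (a).} First treat the symmetric factor. The monomial $(x_1x_2)$ is $\sigma$-invariant, and $\rho(x_1x_2)=\sigma(x_1)\rho(x_2)+\rho(x_1)x_2=-x_2x_1S+Sx_1x_2$. By Lemma~\ref{lem:SxxS}, $Sx_1x_2=x_1S^{(-1)}x_2=x_1x_2S^{(0)}=x_1x_2S$, using $S^{(-1)}x_2=x_2S$, which follows from $Sx_2=x_2S^{(1)}$ after shifting. Hence $\rho(x_1x_2)=0$, and similarly $\rho((x_1x_2)^{-1})=0$ from $0=\rho(1)$ and the Leibniz rule. Consequently, for any integer $b$, $\rho((x_1x_2)^bQ)=\sigma((x_1x_2)^b)\rho(Q)=(x_1x_2)^b\rho(Q)$. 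This reduces the problem to the cases $P=x_1^k$ and $P=x_2^k$ with $k\ge0$.

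For $x_1^k$ we induct on $k$. Writing $x_1^{k}=x_1\cdot x_1^{k-1}$, we get $\rho(x_1^k)=x_2\rho(x_1^{k-1})+Sx_1\,x_1^{k-1}$. Unwinding the recursion gives $\sum_{\ell=0}^{k-1}x_2^\ell Sx_1^{k-\ell}$, which is the stated formula. For $x_2^k$, we use $\rho(x_2)=-x_1S$ (from the preceding lemma, under the standing assumption on $R$) and the same recursion: $\rho(x_2^k)=x_1\rho(x_2^{k-1})-x_1Sx_2^{k-1}$. This yields $-\sum_{\ell=0}^{k-1}x_1^{k-\ell}Sx_2^{\ell}$. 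Combining these with the symmetric factor gives both cases of the formula.

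The main obstacle is the verification $\rho(x_1x_2)=0$, that is, checking that the skew commutation shifts of $S$ from Lemma~\ref{lem:SxxS} cancel exactly. One must be careful with the direction of the twist, $S^{(\pm1)}$, when moving $S$ past $x_1$ and then past $x_2$. A secondary point is confirming that $\rho$ is well defined on the skew ring, which we take as given from the definition.
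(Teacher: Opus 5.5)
Your proof is correct and follows essentially the same route as the paper. For part (a), you kill the symmetric factor via $\rho(x_1x_2)=0$, using Lemma~\ref{lem:SxxS} to get $Sx_1x_2=x_1x_2S$, and then unwind the twisted Leibniz rule \eqref{Leib} on $x_1^k$ and $x_2^k$; part (b) is read off directly from \eqref{Leib} and $\rho(\Fr\otimes\Fr)=0$. The differences are cosmetic: you obtain $\rho((x_1x_2)^{-1})=0$ from $\rho(1)=0$ rather than by the direct computation with $\rho(x_1^{-1}),\rho(x_2^{-1})$, you peel factors off on the left rather than the right, and you spell out the case $b\geq a$ that the paper leaves as ``similar''.
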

\begin{proof}
It suffices to give the proof for the case $d=2$. We drop the subscripts of $\sigma_1$ and $\rho_1$ for simplicity.
Part (a) follows from applying the $\sigma$-twisted Leibniz rule \eqref{Leib} repeatedly. For the base case of the symmetric $P$, we have
\[
\begin{split}    
\rho(x_1x_2) &= \sigma(x_1)\rho(x_2) + \rho(x_1)x_2 = x_2(-x_1S) + S x_1 x_2 = 0,
\\
\rho(x_1^{-1}x_2^{-1}) &= \sigma(x_1^{-1})\rho(x_2^{-1}) + \rho(x_1^{-1})x_2^{-1} = x_2^{-1}Sx_2^{-1} - x_2^{-1} S x_2^{-1} = 0,
\end{split}
\]
where we used Lemma \ref{lem:SxxS} to show that $Sx_1x_2 = x_1x_2 S$.
Then, assuming $a = b+k \geq b$, we have
\[
\begin{split}
    \rho(P) = \rho((x_1x_2)^bx_1^k)&= \sigma(x_1^bx_2^b) \rho(x_1^k) + \rho(x_1^bx_2^b)x_1^k
    \\
    &= (x_1^bx_2^b) (\sigma(x_1^{k-1})\rho(x_1) + \rho(x_1^{k-1})x_1)
    \\
    &= (x_1^bx_2^b) (x_2^{k-1}S x_1 
    + \sigma(x_1^{k-2})\rho(x_1)x_1 + \rho(x_1^{k-2})x_1^2)
    \\
    &= \dots = (x_1^bx_2^b) (x_2^{k-1}S x_1 
    + x_2^{k-2}S x_1^2 + \dots + Sx_1^k).
\end{split}
\]
The remaining case when $a \leq b$ can be proved similarly.
Part (b) follows from combining \cref{DemazureFacts}(a) and \eqref{Leib}.    
\end{proof}
\begin{expl}
The first few non-trivial values of $\rho(P)$ are given as follows:
\[
\begin{split}
\rho(x_1^2) 
&= Sx_1^2 + x_2 S x_1 = x_1^2S^{(-2)} + x_1x_2 S^{(-1)},
\\
\rho(x_2^2) 
&= -x_1^2S - x_1 S x_2 = -x_1^2S - x_1x_2 S^{(1)} .
\end{split}
\]
\end{expl}
From now on, we denote by $\bbA\wr\cH(d)$ the quantum wreath product determined by the following choice of parameters:
\begin{equation}\label{eq:PQWPparn}
    \sigma = \textup{flip},
\quad
S = (q-1)e,
\quad
\rho(x\otimes 1) = S (x \otimes 1),
\quad
R = q(t^k\otimes t^k),
\end{equation}
where $k \in \{0, m/2\}$. 
Here we only consider the $m/2$ case only when $m$ is even.
\begin{thm}\label{thm:SPQWPPBW}
The quantum wreath product $\bbA \wr \cH(d)$ affords the following PBW bases:
\begin{align*}
\{(t^{a_1} x^{\lambda_1} \otimes \dots \otimes t^{a_d} x^{\lambda_d} )H_w ~|~ a_i \in [m], \lambda_i \in \ZZ, w\in \Sigma_d\},
\\    
\{H_w (t^{a_1} x^{\lambda_1} \otimes \dots \otimes t^{a_d} x^{\lambda_d} ) ~|~ a_i\in [m], \lambda_i \in \ZZ, w\in \Sigma_d\}.
\end{align*}
\end{thm}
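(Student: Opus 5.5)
The plan follows the standard PBW argument for quantum wreath products: show the proposed sets span, and prove linear independence by constructing a faithful representation. In the general framework of \cite{LNX24} the PBW property reduces to a finite list of consistency conditions on the parameters $(S,R,\sigma,\rho)$; I would invoke that criterion directly.

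\textbf{Spanning.} The wreath relation $H_i b = \sigma_i(b)H_i + \rho_i(b)$ and the quadratic relation $H_i^2 = S_iH_i + R_i$ let us move every element of $\bbA^{\otimes d}$ to the left of the $H$'s and reduce any word in the $H_i$ to a combination of $H_w$'s. Since the $H_i$ satisfy the braid relations, $H_w$ is well defined for every $w\in\Sigma_d$. Because $\sigma_i$ is invertible and the relation can equally be read as $bH_i = H_i\sigma_i(b) - \rho_i(\sigma_i(b))$ after applying $\sigma_i$, the same reduction moves coefficients to the right. Hence both sets span, and $\bbA^{\otimes d}$ has the displayed basis $\{t^{a_1}x^{\lambda_1}\otimes\cdots\otimes t^{a_d}x^{\lambda_d}\}$.

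\textbf{Linear independence.} I would use the sufficient conditions of \cite[Theorem~A]{LNX24}. In that framework, the elements $(b)H_w$ form a basis provided $\sigma$ is an algebra automorphism of $\bbA\otimes\bbA$, $\rho$ is a $(\sigma,1)$-derivation, and the following compatibilities hold:
\begin{enumerate}[label=(\roman*)]
\item $S$ and $R$ are compatible with $\sigma,\rho$: namely $\sigma(S)b$-type relations, $\rho(S)=\rho(R)=0$, and $R\,\sigma^2(b)=\sigma^2(b)\,R$ together with $Sb$ versus $\sigma(b)S$ relations for all $b$;
\item braid-type compatibilities for $\sigma_i,\rho_i$ on $\bbA^{\otimes 3}$, for instance $\rho_1\rho_2\rho_1=\rho_2\rho_1\rho_2$ in the appropriate twisted sense.
\end{enumerate}

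Several of these conditions are immediate:
\begin{itemize}
\item $\sigma=$ flip is an automorphism of $\bbA\otimes\bbA$.
\item $\rho$ is well defined as a twisted derivation. It is defined on generators by \eqref{Leib0}, so one checks that it respects the defining relations $x f=\psi(f)x$ and $xx^{-1}=1$. The second holds by the lemma computing $\rho(x_1^{-1})$. The first requires $\rho(x_1)f_2$-type identities, which come from \cref{lem:SxxS} and \cref{lem:afc}.
\item $R$ is central relative to $\sigma^2=\id$. This follows from $R x_1 = x_1 R$, which uses $n\mid 2k$ as in the proof of the lemma above.
\item $\rho(R)=0$ holds by definition, and $\rho(S)=0$ because $S\in\Fr\otimes\Fr$.
\end{itemize}

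\textbf{Key check.} The quadratic-wreath compatibility requires $H^2b = (SH+R)b$ to be consistent. Concretely:
\[
\sigma(\rho(b)) + \rho(\sigma(b)) = S\sigma(b) - bS \cdot(\text{suitably}),
\qquad
\rho(\rho(b)) = Sb\text{-terms} + R b - \sigma^2(b) R .
\]
I would verify these only on the generators $x_1,x_2^{\pm1},x_1^{-1}$ and $f\in\Fr\otimes\Fr$, and then propagate to all of $\bbA\otimes\bbA$ using the twisted Leibniz rule. The propagation works because the set of $b$ satisfying the identities is closed under products. Proving that closure is a formal computation using $S\sigma(b)=\dots$, i.e.\ \cref{DemazureFacts}(b) for $\Fr$-coefficients, together with \cref{lem:SxxS} for the twisting by $x$.

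For $f\in\Fr^{\otimes 2}$ the identities reduce to $Sf=\sigma(f)S$. For $x_1$ they reduce to an identity involving $S^2=(q-1)S$ and $Sx_1$ versus $x_2S$, which I expect to close thanks to $e^{(1)}$ commuting appropriately with $R$ via $t^k$.

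\textbf{Braid compatibility.} The braid conditions in $\bbA^{\otimes 3}$ are checked the same way, on generators $x_1,x_2,x_3$ and $\Fr^{\otimes3}$, using \cref{lem:SkewDemazureFacts}.

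\textbf{Main obstacle.} I expect the hardest step to be the quadratic and braid compatibilities for the $x$-generators. There the skew commutation $S_ix_{i+1}=x_{i+1}S_i^{(1)}$ produces shifted idempotents $e^{(\pm1)}$, and one must show that the cross terms like $S_1S_2^{(1)}$ cancel. If those checks become unwieldy, my fallback is to prove independence by the faithful action on the induced module $\bbA^{\otimes d}\otimes_{\Fr^{\otimes d}}\bH_Y$ (a polynomial-type representation). In that module the leading terms of $(b)H_w$ are visibly independent by a length filtration, exactly as for the affine Yokonuma case treated in \cite{LM25}.
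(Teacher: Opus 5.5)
Your plan matches the paper's proof. Like the paper, you invoke the PBW criterion of \cite{LNX24}, dispose of the conditions that follow from $\sigma$ being the flip together with $\rho(S)=\rho(R)=0$, and check the remaining compatibilities (P4), (P6), (P7) on $\Fr$-coefficients and on the $x_i$, extending them by the twisted Leibniz rule (the paper phrases this as induction on the degree of $P$ for $fP$ and $x_iP$). The checks are lighter than you fear: (P4) at $x_1$ just forces $\rho(x_2)=-x_1S$ and otherwise needs only $Rx_i=x_iR$ (i.e.\ $n\mid 2k$), and the braid-type conditions reduce, via the computations of \cite{LM25}, to $S_2S_1=\sigma_1(S_2)S_2$ and $\sigma_1(S_2)R_1=\sigma_1(S_2)R_2$, so no cancellation of shifted idempotents $e^{(\pm1)}$ is required.
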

\begin{proof}
By the PBW basis theorem \cite[Theorem 3.3.1]{LNX24}, it suffices to verify the conditions \cite[(P1)--(P9)]{LM25} for our choice of $(S,R,\sigma,\rho)$. 
Note that (P1) -- (P3), (P5), (P8), (P9) follow from the facts that $\sigma$ is the flip map and that $\rho(S) = 0 = \rho(R)$. 
It suffices to check the following equalities:
\[
\tag{P4}
r_S+\rho\sigma+\sigma\rho = l_S \sigma ,
\quad
r_R+\rho^2 = l_S\rho + l_R \in \End(\bbA\otimes \bbA),
\]
\[
\tag{P6}
    \rho_1 \sigma_2 \rho_1 = r_{S_2} \sigma_2 \rho_1\sigma_2 +\rho_2\rho_1\sigma_2 + \sigma_2\rho_1\rho_2 \in \End(\bbA^{\otimes 3}),
\]
\[
\tag{P7}
    \rho_1 \rho_2 \rho_1 + r_{R_1} \sigma_1 \rho_2\sigma_1 = \rho_2 \rho_1 \rho_2 + r_{R_2} \sigma_2 \rho_1\sigma_2 \in \End(\bbA^{\otimes 3}),
\]
where $l_X$ and $r_X$ mean left and right multiplication by the element $X$, respectively.

\textit{Step 1: Verifying (P4).}
Note that the method applied in \cite[Proposition 3.10]{LM25} does not go through.
Here, we will verify (P4) by reducing it to checking the equalities on $fP$, $x_1P$ and on $x_2P$ for $f \in \Fr\otimes \Fr, P \in \bbC[x_1, x_2]$.

For $x_1P$, the first equality in (P4) follows from an induction on the degree of $P$, since
\begin{equation}    
\begin{split}
(r_S+\rho\sigma+\sigma\rho - l_S\sigma)(x_1P) 
&=
x_1PS + \rho\sigma(x_1 P) + \sigma\rho(x_1P) - Sx_2 \sigma(P) 
\\
&=
x_1PS + \rho(x_2\sigma(P)) + \sigma(x_2\rho(P)+Sx_1P) - Sx_2 \sigma(P)
\\
&=
x_1PS + x_1\rho\sigma(P) -x_1 S\sigma(P) + x_1\sigma\rho(P)
\\
&= x_1(r_S + \rho\sigma + \sigma\rho - l_S)(P).
\end{split}
\end{equation}
The second equality in (P4) also follows from an induction on the degree of $P$, since
\begin{equation}    
\begin{split}
(r_R+\rho^2-l_s\rho - l_R)(x_1P) 
&=
x_1PR + \rho(x_2\rho(P)+ Sx_1 P) - S\rho(x_1P) - R x_1 P
\\
&= x_1PR + x_1 \rho^2(P) - x_1 S\rho(P) - R x_1 P
\\
&= x_1(r_R+\rho^2-l_s\rho - l_R)(P),
\end{split}
\end{equation}
since $R$ commutes with $x_1$.
The verification for (P4) on $x_2P$ is similar and hence we omit.

For $fP$, both equalities in (P4) can be checked on polynomials $P \in \CC[x_1, x_2]$, since
\[
    \begin{split}
(r_S+\rho\sigma+\sigma\rho - l_S\sigma)(fP) 
&=f(r_S + \rho\sigma + \sigma\rho - l_S \sigma)(P) ,
\\
(r_R+\rho^2 - l_S \rho - l_R)(fP) 
&=f(r_R+\rho^2 - l_S \rho - l_R)(P),
    \end{split}
\]
In the equations above we used that $R \in Z(\Fr\otimes \Fr)$ and $S$ is weak Frobenius (see Proposition \ref{DemazureFacts}(b)).

\textit{Step 2: Verifying (P6).}
By a similar argument, the verification of (P6) and (P7) are reduced to checking the equalities on $fP$ and $x_iP$ ($1\leq i \leq 3$) for $P \in \bbC[x_1, x_2, x_3]$, which can then be proved using an induction on the degree of $P$.
Again, the verification on $x_2P, x_3P$ are similar and so we omit. 

The relevant calculation for verifying (P6) on $x_1P$ can be found in \cite[(3.2)]{LM25} (in which $\beta$ means $S x_1$ in our case).
Following the argument therein, (P6) holds on $x_1P$ as long as \cite[Lemma B.1]{LM25} holds.
Write $S_{13} := \sigma_1(S_2) = \sigma_2(S_1)$.
In our setup, it suffices to verify
$\rho_1(S_{13}x_1) = \rho_2(S_1 x_1) + S_{13}x_1 S_2$, or, equivalently,
\begin{equation}\label{newB1}   
S_2 S_1 x_1 + \rho_1(S_{13}) x_1 = \rho_2(S_1) x_1 + S_{13} x_1 S_2.
\end{equation}
Now, both $\rho_1(S_{13}) = (q-1) \sum_j \rho(1\otimes t^j) \otimes t^{-j}$ and $\rho_2(S_1) = (q-1) \sum_j t^j \otimes \rho(1\otimes t^{-j})$ are zero.
By Proposition \ref{DemazureFacts}(b), $S_2 S_1 = S_{13} S_2$, and hence \eqref{newB1} holds.

Next, (P6) holds on $fP$ since 
\begin{multline*}  
(\rho_1 \sigma_2 \rho_1 - r_{S_2} \sigma_2 \rho_1\sigma_2 -\rho_2\rho_1\sigma_2 - \sigma_2\rho_1\rho_2)(fP)
\\=
\sigma_1\sigma_2\sigma_1(f)(\rho_1 \sigma_2 \rho_1 - r_{S_2} \sigma_2 \rho_1\sigma_2 -\rho_2\rho_1\sigma_2 - \sigma_2\rho_1\rho_2)(P),
\end{multline*}
in which we used $\sigma_1\sigma_2\sigma_1(f) = \sigma_2\sigma_1\sigma_2(f)$.

\textit{Step 3: Verifying (P7).}
In order to verify (P7) on $x_1P$,
we use the exact same argument in \cite[Proposition 3.10]{LM25}, and we can deduce that (P7) holds on $x_1P$ provided (P4) holds,  \eqref{newB1}  holds, and 
\begin{equation}\label{newB2}
    S_{13} P R_1 = S_{13} P R_2.
\end{equation}
Next, since $R = q(t^k \otimes t^k)$, one obtains that
$    S_{13} R_1 = (1 \otimes t^k \otimes t^k) S_{13} 
    = S_{13} (1 \otimes t^k \otimes t^k)$.
Thus, \eqref{newB2} follows from the fact that $R_i \in Z(\bbA^{\otimes 3})$.

Finally, (P7) holds on $fP$ since 
\begin{multline*}
    (\rho_1 \rho_2 \rho_1 + r_{R_1} \sigma_1 \rho_2\sigma_1 - \rho_2 \rho_1 \rho_2 - r_{R_2} \sigma_2 \rho_1\sigma_2)(fP)
    \\
= \sigma_1\sigma_2\sigma_1(f)
(\rho_1 \rho_2 \rho_1 + r_{R_1} \sigma_1 \rho_2\sigma_1 - \rho_2 \rho_1 \rho_2 - r_{R_2} \sigma_2 \rho_1\sigma_2)(P),
\end{multline*}
in which we used $\sigma_1\sigma_2\sigma_1(f) = \sigma_2\sigma_1\sigma_2(f)$.
\end{proof}
  
\subsection{The Kashiwara--Miwa--Stern tensor space}\label{sec:KMSforSPQWP}
  Let $\bbV = \bbV(N)$ be the free $\Fr$-module with basis $\{v_i\}_{i \in \bbZ}$. 
  Thus, $\bbV$ has an $\bbA$-module structure via  
  \[
  (v_i f) \cdot x^{\pm 1} := 
   v_{i\pm N} \psi^{\mp1}(f),
   \qquad
   \textup{for all}
   \quad
   f\in \Fr,
   \ i\in\ZZ.
  \]
Next, the $\bbA^{\otimes d}$-module $\bbV^{\otimes d}$ has a basis 
$\{v_f:= v_{f(1)} \otimes \cdots \otimes v_{f(d)}| f:[d] \to \bbZ\}$, on which $\Sigma_d$ acts (from the right) by place permutation.
\begin{prop}\label{prop:KMSaction}
Recall elements $e_i, \al_i \in \Fr^{\otimes d}$ from \eqref{def:e} and \eqref{def:alphab2}.
\begin{enua}
\item
There is a unique right $\bbA  \wr \cH(d)$-module structure on $\bbV^{\otimes d}$ determined by the following:
\begin{equation}\label{eq:fundaction2}
  v_f \cdot H_i = \begin{cases}
    v_{f\cdot s_i}
    &\textup{if } f(i) < f(i+1);
    \\
     v_{f}(-\al_i) 
     &\textup{if }  f(i) = f(i+1);
    \\
    v_{f\cdot s_i} q t^{k}_i t^{k}_{i+1}
    + v_{f} (q-1) e_i 
    &\textup{if }  f(i) > f(i+1),
    \end{cases}
\end{equation}
where $f$ is in the fundamental region, which means $1\leq f(i) \leq N$ for all $i$.
\item There is a decomposition into $\bbA \wr\cH(d)$-submodules as follows:
\[
\bbV^{\otimes d} \cong \bigoplus\nolimits_{\lambda \in \Lambda_{N,d}} M^\lambda,
\]
where 
each submodule $M^\lambda$ is generated by the basis element 
$v_\lambda 
:= v_{1}^{\otimes \lambda_1} \otimes 
v_{2}^{\otimes \lambda_2} \otimes
\dots
v_{N}^{\otimes \lambda_N} \in \bbV^{\otimes d}$.
\end{enua}
\end{prop}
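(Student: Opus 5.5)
We mirror the proof of Proposition~\ref{KMSaction}, but now have to account for the skew commutation $x f=\psi(f)x$. Throughout we use the wreath-module description of \S\ref{sub:wreathmod} adapted to $\bbA$.

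\emph{Decomposition of $\bbV$.} Write $\bbV=M_1\oplus\dots\oplus M_N$ with $M_i:=v_i\bbA$ the cyclic $\bbA$-submodule. The twisted action $(v_jf)\cdot x^{\pm1}=v_{j\pm N}\psi^{\mp1}(f)$ is a genuine right $\bbA$-action: one checks directly that $(v f)\cdot x\cdot g = (v f)\cdot \psi^{-1}(g)x$ is consistent with $xg=\psi(g)x$. Moreover each $M_i$ is free of rank one over $\bbA$ on $v_i$, so $M_i^{\otimes d}\cong \bbA^{\otimes d}$ as right $\bbA^{\otimes d}$-modules. Consequently every $v_f$ can be written uniquely as $v_{\bar f}\,x^{\lambda}$, where $\bar f$ lies in the fundamental region and $x^\lambda$ is a Laurent monomial.

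\emph{Construction of $M^\lambda$ and of the action.} Fix $\lambda\in\Lambda_{N,d}$. Define $M^\lambda$ as the induced module $\big(\boxtimes_i (M_i\wr\sgn^{(\lambda_i)})\big)\otimes_{\bbA\wr\cH(\lambda)}(\bbA\wr\cH(d))$. On each factor $M_i\wr\sgn$, each $H_j$ acts by $v^+b\mapsto v^+(\rho_j(b)-\sigma_j(b)\al_j)$. This is the formula of \eqref{def:WMaction}, and we claim it defines an $\bbA\wr\cH(r)$-module. The cleanest way to verify this is to identify $M_i\wr\sgn$ with $\asph=\sgn\otimes_{\bH_Y}(\bbA\wr\cH(r))$, where $\bH_Y=\Fr\wr\cH(r)\subseteq\bbA\wr\cH(r)$. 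Well-definedness then reduces to the following points:
\begin{itemize}
\item $\sgn$ is an $\bH_Y$-module, using Lemma~\ref{Qsplits2} for the quadratic relation and the fact that $\Fr^{\otimes r}$ is commutative for the braid relations, exactly as in the linear case;
\item $\bbA\wr\cH(r)$ is free as a left $\bH_Y$-module with basis the monomials $x^\mu$, which follows from the PBW Theorem~\ref{thm:SPQWPPBW}.
\end{itemize}
We then use the PBW theorem again, together with the Mackey decomposition over ${}^\lambda\Sigma$, to get a $\CC$-basis of $M^\lambda$. Comparing it with the basis $\{v_f\}$ of $\bbV^{\otimes d}$, where $f$ has content $\lambda$ after reduction mod $N$ to the fundamental region, gives a linear isomorphism $M^\lambda\cong \bigoplus_{\bar f\sim\lambda}v_{\bar f}\bbA^{\otimes d}$. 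Summing over $\lambda$ defines the $\bbA\wr\cH(d)$-action on $\bbV^{\otimes d}$, which yields (b).

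\emph{Matching the formulas \eqref{eq:fundaction2}.} It remains to check that the transported action agrees with \eqref{eq:fundaction2} on fundamental $v_f$. There are three cases.
\begin{itemize}
\item If $f(i)<f(i+1)$, then $v_f\cdot H_i=v_{fs_i}$ by the induced structure, since $v_f$ corresponds to $1\otimes H_\eta$ with $\eta s_i>\eta$.
\item If $f(i)=f(i+1)$, the sign module gives $-\al_i$.
\item If $f(i)>f(i+1)$, the quadratic relation $H_i^2=S_iH_i+R_i$ gives $v_{fs_i}R_i+v_fS_i$, and here $R_i=qt^k_it^k_{i+1}$.
\end{itemize}
Uniqueness holds because the fundamental $v_f$ generate $\bbV^{\otimes d}$ over $\bbA^{\otimes d}$, and the wreath relation \eqref{def:WR} determines $v_fb\cdot H_i$ from $v_f\cdot H_i$.

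\emph{Main obstacle.} The hardest step is the freeness over $\bH_Y$ together with the compatibility of the identification $M_i\cong\bbA$ with the twisted action. The difficulty is that $\rho_i$ is no longer a Demazure operator (Lemma~\ref{lem:SkewDemazureFacts}) and that $S_i$ fails to commute with $x_j$ (Lemma~\ref{lem:SxxS}). If the induced-module route is delicate, the fallback is to verify the defining relations directly on $v_{\bar f}x^\lambda$, reducing by $\bbA^{\otimes d}$-linearity and Lemma~\ref{lem:SkewDemazureFacts} to the three identities (P4), (P6), (P7) already established in the proof of Theorem~\ref{thm:SPQWPPBW}.
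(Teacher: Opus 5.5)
Your argument is correct in substance, but it runs in the opposite direction from the paper's. The paper proves (a) directly by importing the construction of \cite[Proposition 7.1]{LM25}. It only checks that this construction survives for the skew base: $R$ is central for $k\in\{0,m/2\}$, the PBW bases of \cref{thm:SPQWPPBW} exist, and $\mathcal{P}=\al(x_1-x_2)+\beta$ is not a zero divisor. Part (b) is then read off from (a) using the splitting of \cref{Qsplits2}, and the induced-module description of $M^\lambda$ (\cref{prop:MlambdaA}) is stated only afterwards. You instead build $M^\lambda$ first, as a module induced from $\boxtimes_i M_i\wr\sgn$ with $M_i\wr\sgn$ realized as $\asph$, so all module axioms are automatic. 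You then obtain (a) by transport of structure. Your route needs only PBW (which already uses that $R$ is central) and the splitting. It bypasses the non-zero-divisor condition entirely and yields (b) and \cref{prop:MlambdaA} together. The paper's route is shorter given \cite{LM25}. Your use of $\sgn$ is also the choice consistent with the case $f(i)=f(i+1)$ of \eqref{eq:fundaction2}.

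The step you flag as the main obstacle, freeness of $\bbA\wr\cH(r)$ over $\bH_Y$, is immediate from PBW. The step that actually carries content is the one you compress into ``comparing bases''. A merely $\CC$-linear isomorphism would transport an $\bbA^{\otimes d}$-action that need not be the given one on $\bbV^{\otimes d}$. In that case neither the matching with \eqref{eq:fundaction2} nor your uniqueness argument applies. What you need is that $\{v_\lambda\otimes H_\eta\}_{\eta\in{}^\lambda\Sigma}$ is a free right $\bbA^{\otimes d}$-basis of $M^\lambda$. Then $(v_\lambda\otimes H_\eta)b\mapsto v_{f_\lambda\cdot\eta}\,b$ is $\bbA^{\otimes d}$-linear, where $v_{f_\lambda}=v_\lambda$. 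This follows by triangularity:
\begin{itemize}
\item iterating \eqref{def:WR} gives $H_\eta b\in b'H_\eta+\sum_{\ell(w)<\ell(\eta)}\bbA^{\otimes d}H_w$, with $b'$ a place permutation of $b$;
\item factoring $H_w=H_uH_{\eta'}$ with $u\in\Sigma_\lambda$ and $\eta'\in{}^\lambda\Sigma$ pushes the lower terms into $\bigoplus_{\ell(\eta')<\ell(\eta)}W\otimes H_{\eta'}$, where $W=\boxtimes_i M_i\wr\sgn$.
\end{itemize}
This is the analogue of \eqref{eq:yHM2}.

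Two smaller points. First, the displayed formula $v^+b\mapsto v^+(\rho_j(b)-\sigma_j(b)\al_j)$ is not the right action on $\asph$ in the skew case. Indeed \eqref{def:WR} gives $bH_j=H_j\sigma_j(b)-\rho_j(\sigma_j(b))$, hence $(1\otimes b)\cdot H_j=1\otimes(-\al_j\sigma_j(b)-\rho_j(\sigma_j(b)))$. For $b=x_1$ this differs from your formula as soon as $\xi^2\neq 1$, because $Sx_1=x_1S^{(-1)}$. Since you only use $\asph$, just drop that formula. Second, in the case $f(i)=f(i+1)$ one has $\eta s_i=s_j\eta$ with $\eta(i)=j$ and $\eta(i+1)=j+1$. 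This is what gives $\al_jH_\eta=H_\eta\al_i$ with $t^k$ in the correct slot, which matters for $k=m/2$ because $\al$ is not $\sigma$-invariant.
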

\begin{proof}
Note that, without changing anything, the proof of \cite[Proposition 7.1]{LM25} generalizes to any skew polynomial quantum wreath product as long as (1) $R$ is central in $\bbA$, (2) PBW bases exist, and (3) the following element in $\bbA \otimes \bbA $ is not a zero divisor:
\[
\mathcal{P} := \al(x_1-x_2) + \beta.
\]
Indeed, $R =q (t^{k}\otimes t^{k})$ is central if $k \in \{0, m/2\}$.
Next, it follows from Theorem \ref{thm:SPQWPPBW} that the PBW bases exist.
Finally, $\al = (\sqrt{q}(t^{k}\otimes 1)+1)e - \sqrt{q}(t^{k}\otimes 1)$, $\beta = (q-1)e x_1$, and hence $\mathcal{P}$ is not a zero divisor.
Therefore, part (a) follows.

Part (b) is a consequence of part (a) as long as the quadratic relation splits,
which is a consequence of Lemma \ref{Qsplits2}.
\end{proof} 
\begin{expl}
Consider the special case $N=d=2$ with $\psi(f) := \zeta^2 f$ for some primitive ${n}$th root of unity $\zeta \in \CC$.
Let $f(1) = 1$, $f(2) = 5$. 
The $H_1$-action can be obtained using the wreath relation
\[
x^2_2 H_1 
= H_1 x_1^2 - x_1^2S^{(-2)} - x_1x_2 S^{(-1)}
= H_1 x_1^2 - (q-1) (x_1^2 e^{(-2)}  + x_1x_2 e^{(-1)} ).
\]
Therefore,
\[
\begin{split}
(v_1\otimes v_5) \cdot H_1 
&= (v_1 \otimes v_1)(H_1 x_1^2 - (q-1) (x_1^2 e^{(-2)}  + x_1x_2 e^{(-1)} ))
\\
&= (v_1 \otimes v_1)\alb_1 x_1^2 
- (v_1 \otimes v_1)(q-1) (x_1^2 e^{(-2)}  + x_1x_2 e^{(-1)} ),
\end{split}
\]
where
\[
    \alb x_1^2 
    = (\sqrt{q} t_2^{k} + q) ex_1^2 - \sqrt{q} t_2^{k}x_1^2 
    = x_1^2 (\sqrt{q} t_2^{k} + q) e^{(-2)} - \sqrt{q} x_1^2  t_2^{k}, 
\]
and hence
\[
\begin{split}
(v_1\otimes v_5) \cdot H_1 
&= (v_5 \otimes v_1)( (\sqrt{q} t_2^{k} + q) e^{(-2)} -  \sqrt{q} t_2^{k})
- (v_5 \otimes v_1)(q-1)  e^{(-2)}  
- (v_3 \otimes v_3)(q-1)  e^{(-1)}  
\\
&=(v_5 \otimes v_1)( (\sqrt{q} t_2^{k} + 1) e^{(-2)} -  \sqrt{q} t_2^{k}) 
- (v_3 \otimes v_3)(q-1)  e^{(-1)}.
\end{split}
\]
\end{expl}
\subsection{Wreath modules}\label{sec:SPQWPsph}
Within this section, let $N \geq 1$, 
$\bH := \bbA\wr\cH(d)$ and 
$\bH_Y$ be its subalgebra generated by $\Fr^{\otimes d}$ and $H_1, \dots, H_{d-1}$.
and 
Consider the decomposition $\bbV = M_1 \oplus \dots \oplus M_N$,
where
\[
M_i := v_i \bbA  = \bigoplus\nolimits_{j \in\bbZ} v_{i+jN} \Fr \subseteq \bbV.
\]
Then, $M_i^{\otimes d}$ has the following $\CC$-basis:
\begin{equation}\label{eq:MidbasisA}
\{
v^+
(t^{a_1}\otimes \dots \otimes t^{a_d})
(x^{\lambda_1}\otimes \dots \otimes x^{\lambda_d}) 
~|~ a_i \in [m], \lambda_i \in \ZZ \},
\end{equation}
where $v^+:= (v_i^{\otimes d})$.
A direct calculation shows that the twisted Hecke algebra $\tcH = \tcH^{M_i}$ is isomorphic to the subalgebra $\bH_Y$.

The proof of \cite[Corollary 6.4.1]{LNX25} can be repeated  verbatim to prove the following:
\begin{prop}\label{prop:MlambdaA}
\begin{enua}
\item Any wreath module of the form $M_i \wr S_Y^{(r)}$, for $1\leq r \leq d$, is a module over the subalgebra $\bbB \wr \cH(r) \subseteq \bbA \wr \cH(d)$ generated by $\bbA^{\otimes r}$ and $H_1, \dots, H_{r-1}$.

\item For any composition $\lambda = (\lambda_1, \dots, \lambda_n) \vDash d$, denote the corresponding parabolic subalgebra by 
$\bbA\wr \cH(\lambda) := (\bbA\wr \cH(\lambda_1)) \otimes \dots \otimes (\bbA\wr \cH(\lambda_r)) \subseteq \bbA\wr\cH(d)$. 
Then, as $\bH$-modules:
\[
M^\lambda \cong \Ind_{\bbA \wr \cH(\lambda)}^{\bbA \wr \cH(d)}\big( 
(M_1\wr S_Y^{(\lambda_1)})\boxtimes \dots \boxtimes (M_n \wr S_Y^{(\lambda_n)}) 
\big)
= 
\big(
\boxtimes_{i=1}^n (M_i \wr S_Y^{(\lambda_i)})
\big) \otimes_{\bbA \wr \cH(\lambda)} \bH.
\]
\end{enua}
\end{prop}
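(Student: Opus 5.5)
We follow the strategy of \cite[Corollary 6.4.1]{LNX25} and its polynomial analogue \cref{prop:MlambdaB}, checking that each ingredient survives in the skew setting.

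For part (a), we define the action of $\bbA\wr\cH(r)$ on $M_i\wr S_Y^{(r)} = M_i^{\otimes r}\otimes S_Y^{(r)}$ by the analogue of \eqref{def:WMaction}:
\[
(v^+ b\otimes 1)\cdot b' = v^+bb'\otimes 1,
\qquad
(v^+ b\otimes 1)\cdot H_j = v^+\sigma_j(b)\otimes (1\cdot H_j) + v^+\rho_j(b)\otimes 1,
\]
for $b\in\bbA^{\otimes r}$. This is well defined because $M_i\cong\bbA$ is free of rank one with generator $v_i$, so $M_i^{\otimes r}\cong v^+\bbA^{\otimes r}$ has the basis \eqref{eq:MidbasisA}. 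The trivial and sign modules of $\bH_Y$ make sense by \cref{Qsplits2}.

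The defining relations must then be checked on this space.
\begin{enumerate}[label=(\roman*)]
\item The $\bbA^{\otimes r}$-relations hold trivially.
\item The wreath relation \eqref{def:WR} holds because $\rho_j$ satisfies the twisted Leibniz rule \eqref{Leib}: acting by $b'$ and then $H_j$ agrees with acting by $\sigma_j(b')H_j+\rho_j(b')$.
\item The quadratic and braid relations reduce to identities among $(\sigma,\rho,S,R)$ applied to $b$. These are exactly (P4), (P6) and (P7), which were verified in the proof of \cref{thm:SPQWPPBW}.
\item On top of (iii), the quadratic and braid relations need the twisted Hecke algebra $\tcH^{M_i}\cong\bH_Y$ relations acting on $S_Y^{(r)}$, where $H_j$ acts by $\alb_j$ (respectively $-\al_j$).
\end{enumerate}
The only point that is genuinely new compared with \cite{LNX25} is the identification $\tcH^{M_i}\cong \bH_Y$. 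For this, we compute how $S$ and $R$ act on $v_i\otimes v_i$: since $v_i$ is a free $\Fr$-generator fixed by no twisting in degree zero, the elements $S,R\in\Fr\otimes\Fr$ act through $\Fr^{\otimes 2}$ exactly as in the untwisted case. The twist $\psi$ only appears when $x$ is moved past elements of $\Fr$, and this has already been absorbed into $\rho$ via \cref{lem:SxxS}.

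For part (b), we first use the PBW basis \cref{thm:SPQWPPBW}. It shows that $\bH$ is a free left $\bbA\wr\cH(\lambda)$-module with basis $\{H_\eta\mid \eta\in{}^\lambda\Sigma\}$. Hence the induced module has the $\CC$-basis
\[
\{(v^+ b\otimes 1)\otimes H_\eta \mid b \text{ a PBW monomial},\ \eta\in{}^\lambda\Sigma\}.
\]
Next we define a map $\Phi$ from the induced module to $\bbV^{\otimes d}$ by sending $\big(\boxtimes_j (v_j^{\otimes\lambda_j}b_j\otimes 1)\big)\otimes h$ to $v_\lambda (b_1\otimes\cdots\otimes b_{N})\cdot h$, using the action of \cref{prop:KMSaction}(a). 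To see that $\Phi$ is balanced over $\bbA\wr\cH(\lambda)$, we check that it intertwines the action of the parabolic generators $H_j$ with $s_j\in\Sigma_\lambda$. Here \eqref{eq:fundaction2} in the case $f(j)=f(j+1)$ gives the action $-\al_j$ on $v_\lambda$, which matches the sign convention $S_Y^{(\lambda_j)}$ appearing in \cref{prop:MlambdaA} once the conventions are aligned as in \cref{prop:MlambdaB}. The wreath relation then propagates this compatibility from $v_\lambda$ to all $v_\lambda b$.

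Bijectivity of $\Phi$ onto $M^\lambda$ is a triangularity argument. For $f$ in the $\Sigma_d$-orbit of the content $\lambda$ with $f = \lambda\cdot\eta$, \eqref{eq:fundaction2} gives $v_\lambda H_\eta = v_{\lambda\cdot\eta}$, since $\eta$ is a minimal coset representative and so only increasing swaps occur. Multiplying by $\bbA^{\otimes d}$ then reaches $v_f\Fr^{\otimes d}$ for every $f$ whose residues mod $N$ have content $\lambda$. The correction terms from $\rho$ are of lower order, with respect to a filtration by $\ell(\eta)$ and by $x$-degree, so $\Phi$ maps the PBW basis unitriangularly onto a basis of $M^\lambda$. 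Taking the direct sum over $\lambda$ recovers \cref{prop:KMSaction}(b).

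I expect the main obstacle to be this triangularity step. Because $\rho$ is no longer a divided difference (\cref{lem:SkewDemazureFacts}), the correction terms involve the twisted elements $e^{(\pm\ell)}$ interleaved with powers of $x$. One therefore has to verify that the leading term of $v_\lambda\, b\, H_\eta$ is still $v_{\lambda\cdot\eta}\,\eta^{-1}(b)$ up to a unit of $\Fr^{\otimes d}$. I would try first to read this off from the explicit formulas of \cref{lem:SkewDemazureFacts}(a), which show that $\rho_i(x_i^ax_{i+1}^b)$ has strictly smaller spread $|a-b|$.
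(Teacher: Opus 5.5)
Your outline is the one the paper has in mind: its proof just says that the argument of \cite[Corollary 6.4.1]{LNX25} goes through verbatim. However, the step you wave through in part (a) is exactly where the skew setting matters, and as written it fails.

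You transplant the right-action formula \eqref{def:WMaction}, i.e.\ $v^+b\cdot H_j=v^+\bigl(\sigma_j(b)c_j+\rho_j(b)\bigr)$ with $c_j\in\{\alb_j,-\al_j\}$. You then justify the wreath relation by the Leibniz rule (``acting by $b'$ and then $H_j$''). That is the left-module reading of $H_jb'=\sigma_j(b')H_j+\rho_j(b')$. For a right module you need $(z\cdot H_j)\cdot b'=(z\cdot\sigma_j(b'))\cdot H_j+z\cdot\rho_j(b')$.

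Test this with $z=v^+$ and $b'=x_1$ (with $r=2$). The left side is $v^+c_1x_1$, and the right side is $v^+(x_1c_1-x_1S+Sx_1)$, since $\rho(x_2)=-x_1S$. Use $ex_1=x_1e^{(-1)}$ (\cref{lem:SxxS}) and $t_i^kx_1=x_1t_i^k$ for $k\in\{0,m/2\}$. The difference is then $-v^+x_1(\sqrt{q}\,t_1^k+q)(e^{(-1)}-e)$ for $c=-\al$, and $v^+x_1(\sqrt{q}\,t_2^k+1)(e^{(-1)}-e)$ for $c=\alb$. Both are nonzero as soon as $\xi^2\neq 1$, i.e.\ $n\geq 3$.

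Formula \eqref{def:WMaction} is valid over $\bbB$ only because there $\bbB^{\otimes d}$ is commutative and $\rho\sigma=-\rho$. Over $\bbA$ neither holds; for example $\rho(\sigma(x_1))=-x_1S\neq -Sx_1$. So your remark that the twist ``has already been absorbed into $\rho$'' is not right: the twist also obstructs moving $x_i$ past $S$, $\al$ and $\alb$. There are three ways to fix this:
\begin{itemize}
\item Use the mirrored action forced by $bH_j=H_j\sigma_j(b)-\rho_j(\sigma_j(b))$, namely $v^+b\cdot H_j=v^+\bigl(c_j\,\sigma_j(b)-\rho_j(\sigma_j(b))\bigr)$. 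For this action the Leibniz rule does verify the wreath relation.
\item More simply, define $M_i\wr S_Y^{(r)}$ as $S_Y^{(r)}\otimes_{\bH_Y}(\bbA\wr\cH(r))$, recalling $M_i\cong\bbA$. This is a right module for free, and it is free of rank one over $\bbA^{\otimes r}$ on $v^+$ by \cref{thm:SPQWPPBW}, so no relation checking is needed at all.
\item Work with left modules throughout, as in \cite{LNX25}, where your steps (ii)--(iv) are correct.
\end{itemize}

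In part (b) you gloss over a real discrepancy. By \eqref{eq:fundaction2}, $v_\lambda\cdot H_j=-v_\lambda\al_j$ for $s_j\in\Sigma_\lambda$, which is the relation of the sign module. But $S_Y^{(\lambda_j)}$ is by definition the trivial module, on which $H_j$ acts by $\alb_j$. These differ: for instance $-\al_j\epsilon_I=-\epsilon_I$ but $\alb_j\epsilon_I=q\epsilon_I$. Since $v_\lambda$ is a free generator over $\Fr^{\otimes d}$, the map $v^+\mapsto v_\lambda$ is not balanced for the trivial modules. Appealing to ``aligned conventions'' does not resolve this. With \eqref{eq:fundaction2} as stated, your $\Phi$ proves the statement with the sign modules $S_Y^{(1^{\lambda_j})}$, which is the version consistent with \cref{thm:VGG2} ($\bbV(1)^{\otimes d}\cong\bbV(1)\wr\sgn$). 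You should say this explicitly.

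Finally, the triangularity step you expect to be the main obstacle is unnecessary. The induced module is spanned by the elements $(v^+\otimes 1)\otimes H_\eta b$ with $\eta\in{}^\lambda\Sigma$ and $b$ in a basis of $\bbA^{\otimes d}$. To see this, write $H_w=H_uH_\eta$ with $u\in\Sigma_\lambda$, and use $v^+\cdot H_u\in v^+\Fr^{\otimes d}$ together with $fH_\eta\in H_\eta\Fr^{\otimes d}$ for $f\in\Fr^{\otimes d}$. $\Phi$ sends these spanning elements to $v_\lambda H_\eta b=v_{\lambda\cdot\eta}b$. These images are linearly independent and span $v_\lambda\bH=M^\lambda$. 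Hence $\Phi$ is bijective, with no filtration by $x$-degree and no appeal to \cref{lem:SkewDemazureFacts}.
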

By an argument similar to the one in Section \ref{sec:PQWPsph}, we call the following modules the spherical module and the antispherical module, respectively:
\[
\sph := \bbA \wr \triv \cong \triv \otimes_{\bH_Y} \bH,
\qquad
\asph:= \bbA \wr \sgn \cong  \sgn \otimes_{\bH_Y} \bH.
\]

\subsection{Wreath Schur algebras} 
Define the wreath Schur algebra corresponding to $\bH = \bbA \wr \cH(d)$ as
\[
\bbS(N,d) := \End_{\bH }(\bbV(N)^{\otimes d}).
\]
Define the elements $y_\lambda, y_\lambda^\delta \in \bH$ as in \eqref{def:yl}.
For any partially symmetric polynomial 
$P \in (\bbA ^{\otimes d})^{\Sigma_{\delta}}$, define a map
\begin{equation}
\theta_{A,P}: y_\mu \bH \to y_\lambda \bH,
\qquad
\theta_{A,P}(y_\mu) = y_\lambda H_g P y_\mu^\delta.
\end{equation}
Again, for any $h \in \bH$, one can write
$y_\lambda h = \sum_{\eta \in {}^\lambda\Sigma} y_\lambda H_\eta b_{\eta}$ for some $b_{\eta} = b_\eta(h) \in \bbA^{\otimes d}$.
The proof of \cite[Proposition 7.4]{LM25} generalizes, without changing a word, to a proof of the parallel statement in our case when the base algebra is a ring of skew polynomials.
Hence, each $y_\lambda \bH$ is identified with a permutation module via
\begin{equation}\label{eq:yHM2}
y_\lambda \bH \cong M^\lambda,
\qquad
y_\lambda h \mapsto \sum\nolimits_{\eta \in {}^\lambda\Sigma} H_\eta \otimes b_{\eta}(h).
\end{equation}
Thus, $\theta_{A,P}: M^\mu \to M^\lambda$ can be regarded as an element in $\bbS(N,d) = \bigoplus_{\lambda, \mu \in \Lambda_{N,d}} \Hom(M^\mu, M^\lambda)$.

\begin{prop}\label{prop:mult2}
\begin{enua}
\item
The set $\{\theta_{A,P} ~|~ A\in \Theta_{n,d}, P \in (\bbA^{\otimes d})^{\Sigma_{\delta(A)}}\}$ forms a basis of $\bbS(N,d)$.
\item
The multiplication can be obtained explicitly.
Suppose $A \cong (\mu, g, \nu)$ and $A' \cong (\lambda, w, \mu)$. Then,
\[
\theta_{A',P'} \theta_{A,P} = \sum\nolimits_{\eta \in {}^\lambda\Sigma} \theta_{(\lambda, \eta, \nu), P_\eta} c_\eta,
\]
where $P_\eta \in (\bbA^{\otimes d})^{\Sigma_{\delta(\lambda, \eta, \nu)}}$ and $c_\eta \in \bbA^{\otimes d}$ are coefficients appearing in the following:
\[
y_\lambda H_w P' y_\mu^{\delta(A')} H_g P y_\nu^{\delta(A)} = \sum_{\eta \in {}^\lambda\Sigma} y_\lambda H_\eta P_\eta y_\nu^{\delta(\lambda, \eta, \nu)} c_\eta.
\]
\end{enua}
\end{prop}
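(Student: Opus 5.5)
Both parts will be obtained by transporting the proof of \cref{prop:mult} (i.e.\ \cite[Proposition 7.5]{LM25}) to the skew setting. The inputs I would use are the PBW theorem (\cref{thm:SPQWPPBW}), the permutation module description of \cref{prop:KMSaction}(b) and \cref{prop:MlambdaA}, and the identification \eqref{eq:yHM2} of $y_\lambda\bH$ with $M^\lambda$. The key preliminary fact is the analogue of \cite[Lemma 6.2]{LM25}: $y_\lambda H_i = y_\lambda\alb_i$ for $s_i\in\Sigma_\lambda$. I would check this directly. It only involves the subalgebra $\bH_Y$, the splitting \cref{Qsplits2}, and the relations $H_if=\sigma_i(f)H_i$ for $f\in\Fr^{\otimes d}$. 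None of these involve $x$, so the proof in \cite{LM25} carries over verbatim.

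For part (a), I would first identify the Hom spaces. By \cref{prop:MlambdaA}(b) and Frobenius reciprocity,
\[
\Hom_{\bH}(M^\mu,M^\lambda)\cong\Hom_{\bbA\wr\cH(\mu)}\big(\boxtimes_j M_j\wr S_Y^{(\mu_j)},\,M^\lambda\big).
\]
Such a map is determined by the image $m$ of the generator $y_\mu$. The element $m$ must satisfy $m H_i=m\alb_i$ for $s_i\in\Sigma_\mu$. It must also be compatible with the $\bbA^{\otimes d}$-action on the generator $v_\mu$: the annihilator of $v_\mu$ is controlled by $x_i$ acting on $M_j$ for each block, and that action is free.

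Next I would expand $m=\sum_{\eta\in{}^\lambda\Sigma}y_\lambda H_\eta b_\eta$ using PBW. I would group the terms by double cosets $\Sigma_\lambda\eta\Sigma_\mu$ and use \cref{lem:doublecoset}(c) to factor $\eta=g\,g_2$ with $g_2\in{}^\delta\Sigma_\mu$. Running a triangularity argument with respect to the Bruhat order shows that the leading coefficient $b_g$ must lie in $(\bbA^{\otimes d})^{\Sigma_\delta}$, and that the remaining coefficients are forced, giving $m=\sum_A y_\lambda H_gP_Ay_\mu^\delta$. This yields spanning. Linear independence follows because the leading terms $y_\lambda H_gP$ are PBW-independent.

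The main obstacle is the triangularity step, because $x$ no longer commutes with $\Fr$. Moving $P$ past $H_w$ produces twisted coefficients $e^{(j)}$, as in \cref{lem:SxxS} and \cref{lem:SkewDemazureFacts}. So I must check that the lower-order terms $\rho_i(P)$ still have strictly smaller Bruhat length. I also need the notion of $\Sigma_\delta$-invariance to be the correct one: $\sigma$ is the flip, and the twisting $\psi$ acts by the scalar $\xi^2$, which preserves the monomial ordering. I expect that \cref{lem:SkewDemazureFacts}(b) gives exactly the needed statement $\rho_i(fP)=\sigma_i(f)\rho_i(P)$ with $\rho_i(P)$ of lower $H$-degree, and that $\Sigma_\delta$-invariant $P$ pass through $y^\delta_\mu$ as in \cite{LM25}.

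Part (b) then follows formally. Composing $\theta_{A',P'}\circ\theta_{A,P}$ sends $y_\nu$ to $y_\lambda H_wP'y_\mu^{\delta(A')}H_gPy_\nu^{\delta(A)}$. By part (a), this element re-expands in the basis. Reading off coefficients via \eqref{eq:yHM2} gives the $P_\eta$ and $c_\eta$ as stated.
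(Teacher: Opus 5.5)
Your route is the paper's: transport \cite[Lemma 6.2, Propositions 7.4--7.5]{LM25} using \cref{thm:SPQWPPBW}, \cref{Qsplits2} and $y_\lambda H_i=y_\lambda\alb_i$. The step you flag as the main obstacle is a genuine gap, and it is about commutation, not Bruhat or monomial order.

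The LM25 argument needs a $\Sigma_\delta$-invariant $P$ to commute with $H_i$ for $s_i\in\Sigma_\delta$. It also needs $P$ to commute with $\Fr^{\otimes d}$, hence with the $\al$'s and $\alb$'s in the symmetrizers. This is what puts $y_\lambda H_gPy^\delta_\mu$ in $\bH y_\mu$, and dually what makes the bottom coefficient of a homomorphism invariant. In $\bbA\wr\cH(d)$ with $\xi^2\neq1$ (i.e.\ $n\ge 3$) both properties fail: $\rho_i$ is no longer a divided difference, and $x_i$ twists $\Fr$.

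Here is a concrete counterexample. Take $d=2$, $N=1$, $k=0$, $A=(2)$. Then $g=1$, $y^\delta_\mu=1$, $y:=y_{(2)}=H_1+\al_1$, and $\theta_{A,P}(y)=yP$. Take the symmetric element $P=x_1+x_2$. From $\rho(x_1)=Sx_1$, $\rho(x_2)=-x_1S$ and \cref{lem:SxxS} you get $H_1P=PH_1+(q-1)x_1(e^{(-1)}-e)$, and hence
\[
yP(H_1-\alb_1)=y\,c,\qquad c=(\sqrt q+1)\,x_1\bigl(e^{(-1)}-e\bigr)+(\sqrt q+q)\,x_2\bigl(e^{(1)}-e\bigr).
\]
Since $yc=\sigma_1(c)H_1+\rho_1(c)+\al_1c$, PBW gives $yc\neq0$ as soon as $e^{(\pm1)}\neq e$, i.e.\ as soon as $\xi^2\neq1$. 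So $\theta_{(2),x_1+x_2}$ is not even well defined. The same computation with $v^+H_1=-v^+\al_1$ shows that $v^+\mapsto v^+(x_1+x_2)$ is not an endomorphism of $\bbV(1)^{\otimes 2}$ either, so this is not a normalization issue.

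For the same reason, your claim that the coefficient $b_g$ of a homomorphism lies in $(\bbA^{\otimes d})^{\Sigma_\delta}$ is false. Comparing $H_g$-coefficients in $mH_i=m\alb_i$ for $s_i\in\Sigma_\delta$ gives
\[
\alb_i\sigma_i(b_g)-\rho_i\sigma_i(b_g)=b_g\alb_i .
\]
When $\bbA=\Fr[x^{\pm1}]$ this is equivalent to $\sigma_i(b_g)=b_g$, but for $\xi^2\neq1$ it is not. In the example above, the solutions of the form $x_1f+x_2f'$ with $f,f'\in\Fr^{\otimes 2}$ are exactly those with $\alb_1x_2\sigma_1(f)=x_2f'\alb_1$. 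Already $f=1$ forces $f'\neq1$.

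This gap is not specific to your write-up. The paper's proof asserts that the LM25 argument goes through verbatim once PBW and the splitting are available, and it overlooks the same commutation issue. So for $n\ge3$ the statement cannot be proved as written. The parametrizing space $(\bbA^{\otimes d})^{\Sigma_{\delta(A)}}$, and with it the definition of $\theta_{A,P}$, must be replaced by something compatible with the twisted condition above. Only then can your triangularity and spanning argument be run, and part (b) inherits the correction. For $n\le 2$ one has $\xi^2=1$ and $\bbA=\Fr[x^{\pm1}]$, so both your proposal and the paper's proof reduce to \cref{prop:mult}, which is fine.
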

\begin{proof}
Part (a) can be proved using exactly the same argument as in \cite[Lemma 6.2, Propositions 6.3 and 7.4--5]{LM25}, provided the corresponding results hold for the ring of skew polynomials.
Firstly, the PBW bases theorem (Theorem \ref{thm:SPQWPPBW}) holds in our case.
Secondly, the quadratic relation splits in both ways as in Lemma \ref{Qsplits2}.

The argument in {\em loc. cit.} then proves that the set 
$\{ y_\mu P H_g y_\lambda^{\delta(\lambda,g,\mu)} ~|~ g \in {}^\lambda\Sigma^{\mu}, P \in \mathbb{B}_g\}$ forms a $\CC$-basis of $y_\lambda \bH \cap \bH y_\mu$, where $\mathbb{B}_g$ is a $\CC$-basis of $(\bbA^{\otimes d})^{\Sigma_{\delta(\lambda,g,\mu)}}$,
and hence the described set is indeed a basis of $\bbS(N,d)$.

Part (b) is a direct consequence of the identification \eqref{eq:yHM2}.
\end{proof}
\section{Hecke Algebras on $p$-adic Groups and Their  Modules}\label{sec:padic}
In this section, we present results of~\cite{GGK1} on presentations of (metaplectic) pro-$p$ Iwahori Hecke algebras and their Gelfand--Graev modules and define the metaplectic pro-$p$ Schur algebra. 
\subsection{Preliminaries on $p$-adic groups}
Let $F$ be a local non-Archimedean field with ring of integers $\cO$, maximal ideal $\varpi \cO$ for fixed uniformizer $\varpi$ and residue field $\kappa := \cO/ \varpi \cO$ of order $q$ a power of a prime number. 
For example one may pick $F=\QQ_p$, $\cO = \ZZ_p$, $\varpi = p \in \ZZ_p$ and $q=p\in \CC$.
 
Let $G := \mathbf{GL}_d(F)$ with split torus $T = \mathbf{T}(F)$ consisting of diagonal matrices, positive Borel $B$ consisting of upper triangular matrices, negative Borel $B^-$ consisting of lower triangular matrices and corresponding positive and negative unipotent subgroups $U \subset B$, $U^- \subset B^-$.
We denote by $Y:= \Hom (\mathbf{G}_m, \mathbf{T})$ and $X:= \Hom (\mathbf{T}, \mathbf{G}_m)$ the cocharacter and character lattice of $G$, respectively. 
We may identify $Y \cong \ZZ^d $ and $X \cong \ZZ^d$ in such a way that the canonical pairing $\langle -,-\rangle :X \times Y \to \ZZ$ is identified with the standard inner product on $\ZZ^d$. 
If we set $\{\epsilon_i\}_{1 \leq i \leq d}$ to be the standard $\ZZ$-basis of $\ZZ^d$, we have $Y = \bigoplus_{i=1}^d \mathbb{Z} \epsilon_i$.
We write an element $\lambda = \sum_{i=1}^d \lambda_i \epsilon_i \in Y$ as $\lambda = (\lambda_1, \cdots, \lambda_d)$ with $\lambda_i \in \ZZ$, and 
we say that $\lambda$ is dominant if $\lambda_i \geq \lambda_{i+1}$.
Denote by $\Phi = \Phi_+ \sqcup \Phi_- \subset X$ the partition of the set of roots into positive and negative roots (induced by the choice of Borel) and by $\Phi^\vee = \Phi^\vee_+ \sqcup \Phi^\vee_- \subset Y$ the corresponding partition on the set of coroots.
The set of simple roots $\sroots$ consists of $\{\sroot_i =\epsilon_i -\epsilon_{i+1}\}_{1\leq i \leq d-1}$. 

Let $W$ be the finite Weyl group $W := N(\mathbf{T})/\mathbf{T} \cong \Sigma_d$, the symmetric group on $d$ letters.  
It has simple reflections $s_{\sroot_i} = s_i\in W$ for $1 \leq i \leq d-1$. 
It acts naturally on elements $\mu \in X$ and $\lambda \in Y$ (once you identify either $X$ or $Y$ with $\ZZ^d$, the action is just permutation of components).

Let $\Waff:=W\ltimes Y \cong \bbZ \wr \Sigma_d$ be the extended affine Weyl group.

\subsection{Metaplectic covers of $p$-adic groups}
\label{sub:metaplecticgroups}
Fix a positive integer $n$ and let $\mu_n$ denote the group of $n$-th roots of unity.
Assume $F$ contains $n$ distinct $n$-th roots of unity (equivalent to saying $q-1 \equiv 0 \pmod{n}$). 
Let $(\cdot, \cdot)_n:F^* \times F^* \to \mu_n$ denote the $n$-Hilbert symbol as defined in~\cite[p.11]{GGK1} or~\cite[\S2.2.2]{BP25}.
Denote $\epsilon := (\varpi, \varpi)_n=(-1,\varpi)_n \in \{\pm1\}$.
We will impose the additional condition $q-1 \equiv 0 \pmod{2n}$ which implies $\epsilon=1$ for our main results in \S\ref{sec:idemt}; this condition appears quite often in the literature (ex: \cite{BBB19,PP17}), though we do note that with some tedious computations one can adapt the setup in previous sections to take this extra complication into account. 
For $\chi \in \Hom (\kappa^*,\CC^*)$, $\psi \in \Hom (\kappa, \CC^*)$ define the Gauss sums:
\begin{equation}\label{eq:def:gausssum}
	\bg(\psi, \chi):= \sum_{u\in \kappa^*} \psi(u)\chi(u).
\end{equation}

Let $\Bs:Y \times Y \to \ZZ$ be a Weyl group invariant bilinear form on $Y$.
We denote by $\Qs:Y \to \ZZ$ the associated quadratic form satisfying $\Bs(y,z):=\Qs(y+z)-\Qs(y)-\Qs(z)$. 
To the data $(\Bs, n)$ (or $(\Qs, n)$) we may associate a Brylinski--Deligne central extension $\mG$ of $G$~\cite{BD01}(see also~\cite{GG18} or~\cite[\S2.3]{GGK1} for details on this construction):
\[ 1 \to \mu_n \to \mG \to G \to 1.\] 
Since $Y \cong \bigoplus_{i=1}^d \bbZ \epsilon_i$,
a bilinear form with the properties above will depend on two integers $\mathbf{p}, \mathbf{q}\in \ZZ$:
\begin{equation}
	\Bs(\epsilon_i, \epsilon_j) = 
	\begin{cases}
		2\mathbf{p} & \text{ if } i=j; \\
		\mathbf{q} & \text{ if } i\neq j.
	\end{cases}
\end{equation}
We will focus on the case when $\mathbf{q}$ is divisible by $n$. 
For simplicity, we set $\mathbf{p} = 1, \mathbf{q}=0$; the general case can be reduced to this case quite easily.
This implies $\Qs (\alpha_i)=1$ for $\alpha_i \in \Delta$. 
This cover is sometimes called Savin's nice cover and is the one appearing in~\cite{GGK2} or~\cite{BBB19, BBBG}. 
An alternative cover, with $2\mathbf{p}-\mathbf{q}=-1$ is called the Kazhdan--Patterson cover and is quite important in the theory of automorphic forms.
It seems non-trivial at the moment to extend our results to the Kazhdan--Patterson covers or to the most general cover of $G$. 

Let us define $\mY:= \{ y \in Y, \Bs(y, z)\cong 0 \pmod{n} \mbox{ for all } z\in Y\}$.
In our case, we have $\mY = \barn Y$, where $\barn = \frac{n}{\gcd(2,n)}$.

\subsection{Metaplectic Hecke algebras}

We would like to define Hecke algebras with respect to certain compact subgroups of $G$. 
Let $K = \mathbf{G}(\cO)$ be the maximal compact subgroup of $G$. 
The reduction mod $\varpi$ map $\redp: \cO \to \kappa := \cO / \varpi \cO$ induces a map $\redp : K \to \mathbf{G}(\kappa)$ and we define the Iwahori subgroup $I:= \redp^{-1}(\mathbf{B}(\kappa))$ and the pro-p Iwahori subgroup $\Ip:= \redp^{-1}(\mathbf{U}(\kappa))$. 

The group $\mG$ splits (canonically) over $U^-$ and (non-canonically) over $K$; we fix a splitting $s_K:K \to \mG$.  
We can therefore view $U^-$  and $\Ip \subset I \subset K$ as subgroups of $\mG$.
This allows us to define metaplectic Hecke algebras (and their Gelfand--Graev modules). 
Fix an embedding $\iota:\mu_n \to \CC^*$.
Define the metaplectic pro-$p$ Iwahori Hecke algebra of $\iota$-\emph{genuine} functions: 
\begin{equation}\label{def:HGmI}
	\HGbIp := C^\infty_{c,\iota}(\Ip \backslash {\mG} / \Ip) = \{f \in C_c^\infty(\mG) | f(\zeta k_1 g k_2) = \iota(\zeta)f(g) \mbox{ for all } g \in \mG, k_1, k_2 \in \Ip, \zeta \in \mu_n\}.
\end{equation}
$\HGbIp$ is an algebra, with multiplication given by convolution: $f_1 * f_2 (g) = \int_{G} f_1(h)f_2(h^{-1}g)dh$, where $dh$ is the Haar measure on $G$ normalized such that $\int_{\Ip}dh=1$.  
We can also define 
\begin{equation}
\HKbIp := C^\infty_{c,\iota}(\Ip \backslash {\mK} / \Ip) \cong C^\infty_{c}(\Ip \backslash {K} / \Ip);
\end{equation}
the isomorphism above is due to the splitting of $K$ and the genuine condition on the left side.
Similarly, one may define the algebras $\HGbI := C^\infty_{c,\iota}(I \backslash {\mG} / I)$ and $\HKbI:=  C^\infty_{c,\iota}(I \backslash {\mK} / I) \cong C^\infty_{c}(I \backslash {K} / I)$.
Note that if we set $e_I \in \HGbIp$ to be the characteristic function of $I\in \mG$, then one has the standard $p$-adic isomorphism
\begin{equation}\label{eq:proptoIwahori}
	e_I \HGbIp e_I \cong \HGbI.
\end{equation}

Denote by $\cT_g$ the element in the corresponding Hecke algebra ($\HGbIp$ or $\HKbIp$) that is supported on $\mu_n\Ip g\Ip$ and which takes value $1$ at $g$.
This element is well-defined cf.~\cite[\S3.1]{GGK1}. 

For $\alpha \in \Phi$, let $\{\bar{e}_\alpha (a), a \in F\}$ be the corresponding root subgroups in $\mG$ (cf.~\cite[\S2.3]{GGK1}). Define 
\begin{equation}
	\bar{w}_\sroot(a) := \bar{e}_{\sroot}(a) \bar{e}_{-\sroot}(-a^{-1}) \bar{e}_{\sroot}(a), \quad \bar{h}_{\sroot}(a):= \bar{w}_{\sroot}(a) \bar{w}_{\sroot}(-1) \mbox{ for } a \in F, \quad
	\cT_{\sroot} := \cT_{\bar{w}_\sroot(1)}.
\end{equation}

\subsubsection{The finite Yokonuma algebra}\label{subsubsec:finiteYokonuma}
Recall the finite field $\kappa$ with $q$ elements. 
The group $\kappa^*$ is cyclic of order $q-1$ with generator $t$, so we may write $\kappa^* \cong C_{q-1}$. 

Let us denote by $\Gk, \Bk, \Uk, etc.$ the $\kappa$ points of the corresponding group (i.e. $\Gk := \mathbf{G}(\kappa)$, etc.). 
Then $\Tk = \mathbf{T}(\kappa) \cong C_{q-1}^d$. 
We have a natural algebra isomorphism $C^\infty_{c}(\Uk \backslash {\Gk} / \Uk) \cong C^\infty_{c}(\Ip \backslash {K} / \Ip) = \HKbIp$ induced by the map $\redp$. 
Both algebras have basis indexed by $W \ltimes \Tk \cong \Uk \backslash {\Gk} / \Uk \cong \Ip \backslash {K} / \Ip$.

For $\chi \in \Hom (T_\kappa, \CC^*) = \Hom (T_\kappa, \mu_{q-1})$ define:
\begin{equation}
	c(\chi):= \frac{1}{|T_\kappa|} \sum_{\tau \in T_\kappa}\chi(t)\cT_\tau \in C^\infty_{c}(\Uk \backslash {\Gk} / \Uk) \cong \HKbIp.
\end{equation}
The element above can also be thought of as an element in $\HGbIp$; we shall denote both elements by the same symbol; it should be clear from context where any given elements lives. 
Define:
\begin{equation}\label{eq:def:calphai}
	c_\sroot(\chi)= \frac{1}{q-1} \sum_{u \in \kappa^*}\chi(u)\cT_{\bar{h}_{\sroot}(u)} \textup{ for } \chi \in \Hom(\kappa^*, \mu_n).
\end{equation}
The algebra $\HKbIp$ is generated by $\cT_{\sroot_i}, \sroot_i\in \sroots$ and $c(\chi)$, $\chi\in \Hom(T_\kappa, \CC^*)$ subject to relations:
\begin{align}
	& \label{eq:pQR} \cT_{\sroot_i} c(\chi) = c(s_i \chi)\cT_{\sroot_i},
	\\
	& \label{eq:pQR} \cT_{\sroot_i}^2 = q \cT_{\bar{h}_{\alpha_i}(-1)} + (q-1)c_{\alpha_i}(\mathbf{1}) \cT_{\sroot_i},
	\\
	& \label{eq:pQR} c(\chi) c(\chi')=c(\chi') c(\chi).
\end{align}  
and the braid relations for the $\cT_{\sroot_i}$ (cf.~\cite[Prop. 3.4, Lemmas 3.2 and 4.5]{GGK1}).

The braid relations allows us to define $\cT_{w}:=\cT_{\sroot_{i_1}}\cdots\cT_{\sroot_{i_k}}$ for a reduced expression $w= s_{i_1},\cdots s_{i_k}$. 
Then $\HKbIp$ has basis $\{c(\chi) \cT_w\}$ for $\chi \in \Hom(T_\kappa, \CC^*), w \in W$.
 
\subsubsection{The metaplectic pro-$p$ Hecke algebra}\label{subsub:metaplecticYokonuma}
The structure of $\HGbIp$ is described in~\cite{GGK1} (and in the $n=1$ setting by Vigneras~\cite{V16}), which we will now present.
The pro-$p$ Iwahori Weyl group is 
\begin{equation}
	\Wp := (\Tk \times Y)\rtimes W = (C_{q-1}^d \times \ZZ^d)\rtimes \Sigma_d \cong (C_{q-1} \times \bbZ)\wr \Sigma_d.
\end{equation}
An arbitrary element in $\Wp$ is of the form $a\lambda w$ for some  $a\in \Tk$, $\lambda \in Y$, $w\in W$.

Let $Y_\lambda$ denote the element $\cT_{\varpi^{\lambda'}}\cT_{\varpi^{\lambda''}}^{-1}$ for any $\lambda', \lambda''$ dominant such that $\lambda= \lambda'-\lambda''$ ($Y_\lambda$ is indeed well-defined based on the same argument used in the Iwahori Hecke algebra case). 
Moreover, let 
\begin{equation}\label{def:varphis}
	\varphi(\lambda) \in \Hom (T_\kappa, \CC^*), \quad \varphi(\lambda) (s):= [\varpi^\lambda, s],
\end{equation}
where the commutation relation is in the metaplectic group. 

The following result gives a presentation by generators and relations of $\HGbIp$; it is a summary of~\cite[Lemma 3.2, Prop. 3.4, Lemma 4.5, Prop 4.6, Thm. 4.18]{GGK1}. 

\begin{thm}[\cite{GGK1}]\label{thm:propHgenrel}
	$\HGbIp$ has generators $\cT_{\alpha_i}$ for $\sroot_i\in \sroots$, $Y_\lambda$ for $\lambda\in Y$, $c(\chi)$ for $\chi\in \Hom(T_\kappa, \CC^*)$ which are 
	subject to the following relations
	\begin{align}
		& \label{eq:MpBR} \textup{(braid relations)} 
		& \textup{braid relations for the } \cT_{\sroot_i},
		\\
		& \label{eq:MpQR} \textup{(quadratic relations)}
		&
		\cT_{\sroot_i}^2 = q \cT_{\bar{h}_{\alpha_i}(-1)} + (q-1)c_{\alpha_i}(\mathbf{1}) \cT_{\sroot_i},
		\\
		&\label{eq:MpPoly} \textup{(commutation relations)} 
		& Y_{\lambda} Y_\mu = Y_{\lambda+\mu}=Y_{\mu} Y_\lambda, \quad c(\chi) c(\chi') = c(\chi') c(\chi),	
		\\
		&\label{eq:MpSkew} \textup{(mixed relations)} 
		& \cT_{\sroot_i} c(\chi) = c(s_i \chi)\cT_{\sroot_i}, \quad 
		Y_\lambda c(\chi) = c(\varphi(\lambda) \chi) Y_\lambda,
		\\
		& \textup{(Bernstein relations)}
		&\cT_{\sroot_i} Y_\lambda +
		(q-1) \sum_{j \in \ZZ_{\geq 1}} \epsilon^{j(1+\langle \lambda,\alpha_i \rangle)}Y_{\lambda+j\alpha_i} c_{\alpha_{i}}(j+\langle \lambda,\alpha_i\rangle)= \quad\quad
		\notag
		\\
		&\label{eq:MpWR}
		& Y_{s_i \lambda} \cT_{\sroot_i} +
		(q-1)\sum_{j \in \ZZ_{\geq 1-\langle \lambda, \alpha_i \rangle}} \epsilon^{j(1+\langle \lambda,\alpha_i \rangle)}Y_{\lambda+j\alpha_i} c_{\alpha_{i}}(j+\langle \lambda,\alpha_i\rangle).
	\end{align} 
\end{thm}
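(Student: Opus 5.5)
The statement is explicitly a summary of results from \cite{GGK1}. I would therefore prove it by assembling the cited results and checking that they jointly give a presentation, rather than rederiving everything.

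\emph{Step 1: generation and relations.} By \cite[Lemma 3.2, Prop.~3.4]{GGK1}, the finite part $\HKbIp \subset \HGbIp$ is generated by the $\cT_{\sroot_i}$ and the $c(\chi)$. These satisfy the braid relations \eqref{eq:MpBR}, the quadratic relation \eqref{eq:MpQR} and the first mixed relation in \eqref{eq:MpSkew}, as recalled in \S\ref{subsubsec:finiteYokonuma}. The elements $Y_\lambda$ are well defined and commute, as in the Iwahori case. The key input is that for dominant $\lambda',\lambda''$ lengths add in $\Wp$, so $\cT_{\varpi^{\lambda'}}\cT_{\varpi^{\lambda''}}=\cT_{\varpi^{\lambda'+\lambda''}}$. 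This holds up to the metaplectic cocycle, which is trivial on the dominant cone for Savin's cover since $\mathbf{q}=0$.

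The relation $Y_\lambda c(\chi)=c(\varphi(\lambda)\chi)Y_\lambda$ follows from conjugating $\cT_\tau$ by $\varpi^\lambda$ in $\mG$. This gives $\varpi^\lambda \tau \varpi^{-\lambda}=[\varpi^\lambda,\tau]\tau$, and the genuine character then produces the twist by $\varphi(\lambda)$ from \eqref{def:varphis}.

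\emph{Step 2: the Bernstein relation.} This is the main obstacle and is \cite[Lemma 4.5, Prop.~4.6]{GGK1}. I would first reduce to $\lambda$ with $\langle\lambda,\alpha_i\rangle\in\{0,\pm1\}$ by a rank-one computation in the Levi of $\alpha_i$. There one computes $\cT_{\sroot_i}\cT_{\varpi^\lambda}$ by decomposing the double coset $\Ip \bar w_{\alpha_i}(1)\Ip\varpi^\lambda\Ip$ using $\bar e_{\alpha_i}(u)$ for $u\in\kappa$. The terms with $u\neq0$ rewrite via $\bar w_\alpha(u)=\bar e_{-\alpha}(u^{-1})\bar h_\alpha(u)\bar e_{-\alpha}(\cdots)$, and summing over $u\in\kappa^*$ produces the $c_{\alpha_i}(\cdot)$ terms. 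The signs $\epsilon^{j(1+\langle\lambda,\alpha_i\rangle)}$ arise from Hilbert symbols $(\varpi,u)_n$ and $(\varpi,\varpi)_n$ when moving $\bar h_\alpha(u)$ past $\varpi^{j\alpha}$.

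The general $\lambda$ then follows by induction on $|\langle\lambda,\alpha_i\rangle|$. The inductive step multiplies by $Y_{\mu}$ with $\langle\mu,\alpha_i\rangle=0$ or $\pm1$ and uses the mixed relation to move $c_{\alpha_i}$ past $Y$. This is what turns the finite sums into the displayed sums over $j$; only finitely many terms are nonzero because $c_{\alpha_i}(\chi)$ is indexed by $\chi$ modulo $n$.

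\emph{Step 3: completeness.} Let $\cA$ be the abstract algebra defined by these generators and relations. The relations let one rewrite any word in the spanning form $c(\chi)Y_\lambda\cT_w$, so $\cA$ is spanned by such words. By the Iwahori--Matsumoto type basis \cite[Thm.~4.18]{GGK1}, the images of $c(\chi)Y_\lambda\cT_w$ form a basis of $\HGbIp$, since $\{\cT_{a\varpi^\lambda w}\}$ is a basis indexed by $\Wp$ and the transition between the two is unitriangular. The surjection $\cA\to\HGbIp$ therefore carries a spanning set bijectively onto a basis, so it is an isomorphism.
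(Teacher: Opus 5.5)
Your overall route is the paper's. The paper gives no argument beyond citing \cite[Lemma 3.2, Prop.~3.4, Lemma 4.5, Prop.~4.6, Thm.~4.18]{GGK1}, and your Step 3 triangularity argument matches how the paper passes to \cref{cor:propHbasis}. As a citation-based proof this is fine, but your sketch misreads the Bernstein relation.

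The sums in \eqref{eq:MpWR} do \emph{not} have only finitely many nonzero terms. Each $Y_{\lambda+j\alpha_i}$ is a unit, and each $c_{\alpha_i}(\cdot)$ is a nonzero idempotent (a nonempty sum of some of the $c(\chi)$). For distinct $j$ these products are linearly independent by \cref{cor:propHbasis}, and periodicity of $c_{\alpha_i}(k)$ in $k$ modulo $n$ makes nothing vanish. Instead, with $k=\langle\lambda,\alpha_i\rangle$, the two sides share the identical infinite tail over $j\ge\max(1,1-k)$. The relation \eqref{eq:MpWR} is shorthand for the finite identity left after cancelling that tail. For $k\ge 1$ it reads
\[
\cT_{\sroot_i}Y_\lambda=Y_{s_i\lambda}\cT_{\sroot_i}+(q-1)\sum_{j=1-k}^{0}\epsilon^{j(1+k)}Y_{\lambda+j\alpha_i}c_{\alpha_i}(j+k).
\]
For $k\le -1$ the surviving terms are $j=1,\dots,-k$ on the left, and for $k=0$ the two elements commute. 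This $|k|$-term identity is what your induction on $|\langle\lambda,\alpha_i\rangle|$ must produce. It is the only form in which the relation can serve as a rewriting rule in Step 3. It is also what the paper matches against the $|k|$-term formula for $\rho_i$ in \cref{lem:SkewDemazureFacts} when proving \cref{thm:PQWP2}.

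Three smaller points also need repair.

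\emph{Step 3.} The displayed relations do not by themselves let you rewrite every word as a combination of the $c(\chi)Y_\lambda\cT_w$. They only say that the $c(\chi)$ commute, so nothing reduces $c(\chi)c(\chi')$, and nothing forces $\sum_\chi c(\chi)=1$ or $Y_0=1$. You must read the statement as saying that the $c(\chi)$ are the primitive idempotents of a copy of $\CC[\Tk]$, which is also where $\cT_{\bar h_{\alpha_i}(-1)}$ and $c_{\alpha_i}(\cdot)$ live. Likewise the $Y_\lambda$ must span a copy of $\CC[Y]$. This is the reading the paper itself uses when it identifies the $c(\chi)$ with elements of $\Fr^{\otimes d}$.

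\emph{Step 1.} The reason you give for $\cT_{\varpi^{\lambda'}}\cT_{\varpi^{\lambda''}}=\cT_{\varpi^{\lambda'+\lambda''}}$ is not the right one. For the usual lifts of $\varpi^{\lambda}$ to $\mG$, the products $\varpi^{\lambda'}\varpi^{\lambda''}$ and $\varpi^{\lambda'+\lambda''}$ differ by a power of $(\varpi,\varpi)_n=\epsilon$. This power comes from the diagonal entries $\mathbf{p}=1$ of $\Bs$, and neither $\mathbf{q}=0$ nor dominance removes it. What you need is $\epsilon=1$, or a renormalization of the lifts; the latter is available because $\mathbf{q}=0$ makes $\varpi^{Y}$ commutative.

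\emph{The rank-one identity.} The identity you quote cannot hold, because $\bar e_{-\alpha}(u^{-1})\bar h_\alpha(u)\bar e_{-\alpha}(\cdot)$ is lower triangular. The decomposition you want is that of $\bar e_\alpha(u)\bar w_\alpha(1)$ for $u\neq 0$, namely $\bar e_{-\alpha}(u^{-1})\,\bar h_\alpha(-u)\,\bar e_{\alpha}(-u^{-1})$ up to $\mu_n$.
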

There is a natural bijection $\Wp \cong \mu_n \Ip \backslash \mG / \Ip$~\cite[Prop. 3.35]{V16}; as such $\HGbIp$ has a basis indexed by elements in $\Wp$.
The relations in Theorem~\ref{thm:propHgenrel} allow us to specify this basis.  
\begin{cor}\label{cor:propHbasis}
	$\HGbIp$ has basis $\{c(\chi) Y_\lambda \cT_w \}$ (or basis $\{Y_\lambda c(\chi) \cT_w \}$) for $\chi \in \Hom(T_\kappa, \CC^*), \lambda \in Y, w\in W$.
\end{cor}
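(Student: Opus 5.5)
We derive the two bases in two steps: spanning from the relations in Theorem~\ref{thm:propHgenrel}, and linear independence from the double coset parametrization $\Wp \cong \mu_n\Ip\backslash\mG/\Ip$ (\cite[Prop. 3.35]{V16}).

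\emph{Spanning.} Let $\mathcal{B}$ be the span of $\{c(\chi)Y_\lambda\cT_w\}$. Since $1\in\mathcal{B}$, it suffices to show that $\mathcal{B}$ is stable under right multiplication by each generator.
\begin{itemize}
\item[(i)] Right multiplication by $c(\chi')$: the mixed relations \eqref{eq:MpSkew} give $\cT_w c(\chi') = c(w\chi')\cT_w$ and $Y_\lambda c(\chi')=c(\varphi(\lambda)\chi')Y_\lambda$. The $c$'s then commute by \eqref{eq:MpPoly}.
\item[(ii)] Right multiplication by $\cT_{\sroot_i}$: if $\ell(ws_i)>\ell(w)$, this gives $\cT_{ws_i}$. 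Otherwise, the quadratic relation \eqref{eq:MpQR} expresses $\cT_{\sroot_i}^2$ in terms of $\cT_{\bar h_{\alpha_i}(-1)}$, which is a torus element, hence a combination of the $c(\chi)$ by Fourier inversion on $T_\kappa$, and $c_{\alpha_i}(\mathbf 1)\cT_{\sroot_i}$. The torus factors are then pushed left by (i).
\item[(iii)] Right multiplication by $Y_\mu$: we induct on $\ell(w)$, using the Bernstein relation \eqref{eq:MpWR} to move $Y_\mu$ past each $\cT_{\sroot_i}$. This produces $Y_{s_i\mu}\cT_{\sroot_i}$ plus finitely many terms $Y_{\mu+j\alpha_i}c_{\alpha_i}(\cdot)$ with no $\cT_{\sroot_i}$. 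The sums are finite because the two index ranges differ only by finitely many $j$.
\end{itemize}
Combined with \eqref{eq:MpPoly} and (i), the $Y$'s collect into a single $Y_{\lambda'}$ sitting to the left of a shorter $\cT$-word, and induction closes the argument. The second family $\{Y_\lambda c(\chi)\cT_w\}$ spans by the same argument, or directly, since $Y_\lambda c(\chi)=c(\varphi(\lambda)\chi)Y_\lambda$ and the map $\chi\mapsto\varphi(\lambda)\chi$ is a bijection for fixed $\lambda$.

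\emph{Independence.} Each $c(\chi)$ is a combination of the $\cT_\tau$ for $\tau\in T_\kappa$, with invertible (Fourier) transition matrix. It is therefore enough to show that the elements $\cT_\tau Y_\lambda\cT_w$ form a basis. We use a triangularity argument with respect to the Bruhat-type order on $\Wp$: when $\lambda$ is dominant and $\ell$ is additive, $Y_\lambda=\cT_{\varpi^\lambda}$ and $\cT_\tau\cT_{\varpi^\lambda}\cT_w=\cT_{\tau\varpi^\lambda w}$ up to a root of unity. For general $\lambda=\lambda'-\lambda''$, the support of $\cT_\tau Y_\lambda\cT_w$ is contained in double cosets of length at most $\ell(\tau\varpi^\lambda w)$, and it has an invertible leading coefficient on the coset of $\tau\varpi^\lambda w$, as in the Iwahori case (Bernstein–Lusztig/Vignéras). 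Since the $\cT_g$ for $g\in\Wp$ form a basis of $\HGbIp$ by the double coset bijection, unitriangularity gives independence. The count also matches directly: the index set $\Hom(T_\kappa,\CC^*)\times Y\times W$ is in bijection with $\Wp$.

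\emph{Main obstacle.} The hardest step is the leading-term claim in the independence argument. We must check that $Y_\lambda$ for non-dominant $\lambda$, and the genuine metaplectic twisting (the commutators $\varphi(\lambda)$), do not cancel the leading coefficient. The approach would be to first reduce to dominant $\lambda$ by multiplying by a large central-ish $Y_{\nu}$ with $\nu\in \barn Y$ dominant, since $Y_\nu$ is invertible. We would then use length additivity $\ell(\varpi^{\lambda}w)=\ell(\varpi^\lambda)+\ell(w)$ for suitable dominant choices. Alternatively, one could simply invoke \cite[Thm. 4.18]{GGK1}, where this basis statement is essentially established.
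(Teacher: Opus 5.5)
Your proposal is correct and follows the same route as the paper. The paper just combines the double-coset bijection $\Wp\cong\mu_n\Ip\backslash\mG/\Ip$ of \cite[Prop.~3.35]{V16}, which gives the basis $\{\cT_g\}_{g\in\Wp}$, with the relations of Theorem~\ref{thm:propHgenrel} from \cite{GGK1}; you have written out the spanning and independence steps it leaves implicit. Your last-paragraph reduction already proves independence outright: left-multiply a finite relation by an invertible $Y_\nu$ to make every $\lambda$ dominant, then use $\ell(\varpi^\lambda w)=\ell(\varpi^\lambda)+\ell(w)$, so that each $\cT_\tau Y_\lambda\cT_w$ is a nonzero multiple of the single element $\cT_{\tau\varpi^\lambda w}$. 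The Bruhat-triangularity claim you flag as the main obstacle is therefore not needed.
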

It is a standard fact that the map $c(\chi) \cT_w \mapsto c(\chi)\cT_w$ realizes $\HKbIp$ as a subalgebra of $\HGbIp$.
 
\subsection{Metaplectic Gelfand--Graev modules}

Let $\psi: U^-\to \CC^\times$ be a nondegenerate character of conductor $\varpi \cO$ as in~\cite[p. 30]{GGK1}. This character descends to a non-degenerate character (denoted by the same symbol; it should be clear which character we use at any given time) $\psi: \Uk^- \to \CC^*$.   

\subsubsection{Finite Gelfand--Graev module}\label{subsub:finiteGG}
Let $\Vk := \Ind_{\Uk^{-}}^{\Gk} (\psi)^{\Uk}$. 
The algebra $\HGbIpk := C_c^{\infty} (\Ip \backslash K /  \Ip)$ is isomorphic to  $C_c^{\infty} (U_\kappa \backslash G_\kappa /  U_\kappa)$ and acts on $\Vk$ by convolution. 
Then $\Vk \cong \CC[\Tk]$ as $\HGbIpk$-modules~\cite[Lem 5.4]{GGK1}.   

More precisely, $\Vk$ has basis $c^\bg(\chi)$ for $\chi\in \Hom(T_\kappa, \CC^*)$. 
The action of $c(\chi) \in \HGbIpk$ on $\Vk$ is given by multiplication in $\HGbIpk$. 
The action of $\cT_{\sroot_i}$ is given by~\cite[(5.2)]{GGK1}:
\begin{equation}\label{eq:action-finite-Vk}
	c^\bg(\chi) \cT_{\sroot_i} = \bg(\psi_{\sroot_i}, \chi_{\sroot_i}) c^\bg(s_i \chi),
\end{equation}
where $\chi_{\sroot}(u):=\chi(\hbar_{-\sroot}(u))$ and $\psi_{\sroot}:= \psi(\bar{e}_{-\sroot}(u))$ and $\bg$ is the Gauss sum defined in~\eqref{eq:def:gausssum}. 

To specify the Gauss sums, note that once you identify $\chi_{\alpha_i}$ with an element $u^k\in\kappa^*$, then $\bg(\psi_{\sroot_i}, \chi_{\sroot_i}) = \mathbf{g}(\psi_\alpha, (u,\varpi)_n^k)$ (cf.~\cite[p. 65]{GGK1}).
The character $\psi$ is a fixed non-degenerate additive character with the properties discussed above, the Gauss sum appearing in~\eqref{eq:action-finite-Vk} do not depend on it or on the simple root $\alpha_i$.
Because of this, if we identify $\chi$ with $\gamma\in[m]^d$ (as we will do in~\S\ref{subsec:toral}), then $\bg_{\chi_{\sroot_i}}$ only depends on $k_{\chi_{\sroot_i}}=\gamma_i-\gamma_{i+1}$, so we may write $\bg (\psi_{\sroot_i}, \chi_{\sroot_i})=\bg_{k_{\chi_{\sroot_i}}}$ with $k_{\chi_{\sroot_i}}\in \ZZ$. 
These Gauss sums will satisfy the relations (cf.~\cite[p. 65]{GGK1},~\cite[p. 33]{GSS23}, see also~\cite[(4.1)]{PP17}):
\begin{equation}\label{eq:gauss-sums-properties}
	\bg_k = \bg_{k+m}, 
    \quad 
    \bg_k \bg_{-k}=q \quad\textup{ for } k\neq 0 \pmod{m},
    \quad \textup{ and } 
    \quad \bg_0=-1.
\end{equation}
While it is true that $|\bg_k|=\sqrt{q}$ when $k \neq 0 \pmod{m}$, $\bg_k$ is generally not equal to $\pm \sqrt{q}$. 
Gauss sums contain important number theoretic information, so it is important to keep accurate track of them.

\begin{rmk}
	We note here that the appearance of the Gauss sums in~\eqref{eq:action-finite-Vk} is a pro-$p$ Iwahori phenomena and \emph{not} a metaplectic phenomena. 
	These Gauss sums appear non-trivially even when the degree of the metaplectic cover we are working with is $n=1$, while when working at the Iwahori level we only notice Gauss sums when working with metaplectic covers.
\end{rmk}
\subsubsection{The $p$-adic Gelfand--Graev module}
Define the $\HGbIp$ Gelfand--Graev module 
\begin{equation}\label{eq:defWIp}
	\WbIp := \ind_{\mu_n U^-}^{\mG} (\iota \otimes \psi)^{\Ip}.  
\end{equation}
Similarly we may define the $\HGbI$ module $\WbI := \ind_{\mu_n U^-}^{\mG} (\iota \otimes \psi)^{I}$. As in~\eqref{eq:proptoIwahori}, we have that
\begin{equation}\label{eq:proptoIwhaoriGG}
	\WbIp e_I \cong \WbI.
\end{equation}
The following description of $\WbIp$ is given in~\cite[Thm 5.13]{GGK1}:
\begin{prop}\label{prop:affineWIpinduced}
	There is an $\HGbIp$ isomorphism
	\begin{equation}\label{eq:affineWIpinduced}
		\WbIp \to \Vk \otimes_{\HGbIpk} \HGbIp.
	\end{equation}
\end{prop}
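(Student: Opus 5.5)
We plan to build the isomorphism \eqref{eq:affineWIpinduced} out of Frobenius reciprocity, and then compare dimensions and supports on both sides. By \cite[Lem 5.4]{GGK1} and \S\ref{subsub:finiteGG}, the finite Gelfand--Graev module $\Vk = \Ind_{\Uk^-}^{\Gk}(\psi)^{\Uk}$ embeds into $\WbIp$. The embedding sends a function on $\Gk$ to the function on $\mG$ supported on $\mu_n U^- K$ obtained by inflating along $\redp$, using the splitting $s_K$ and the genuine character $\iota$. This embedding intertwines the action of $\HGbIpk \cong \HKbIp \subset \HGbIp$. The universal property of $-\otimes_{\HGbIpk}\HGbIp$ then produces a right $\HGbIp$-module map
\[
\Phi:\Vk \otimes_{\HGbIpk} \HGbIp \to \WbIp,\qquad v\otimes h\mapsto v\cdot h.
\]
The map in \eqref{eq:affineWIpinduced} will be the inverse of $\Phi$, once we know $\Phi$ is bijective.

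Next we describe the source as a vector space. By Corollary \ref{cor:propHbasis}, $\HGbIp$ is free as a left $\HGbIpk$-module with basis $\{Y_\lambda\}_{\lambda\in Y}$. This holds because the basis $\{Y_\lambda c(\chi)\cT_w\}$ can be rewritten, using the mixed relation $Y_\lambda c(\chi) = c(\varphi(\lambda)\chi)Y_\lambda$ and the Bernstein relations, in the form $\{c(\chi)\cT_w Y_\lambda\}$; this rewriting is a triangularity argument on the length and dominance order. Consequently the source has basis $\{c^\bg(\chi)\otimes Y_\lambda\}$ indexed by $\Hom(\Tk,\CC^*)\times Y$.

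On the target side, the Iwasawa-type decomposition $\mG = \bigsqcup_{\lambda\in Y} \mu_n U^- \varpi^\lambda T(\cO) \Ip$ shows that $\WbIp$ has a basis of functions supported on single cells $\mu_n U^-\varpi^\lambda t\Ip$ with $t\in \Tk$. The genuineness and $\psi$-equivariance conditions do not kill any of these cells. This is the key point of \cite{GGK1}: the conductor $\varpi\cO$ makes $\psi$ trivial on $U^-\cap \varpi^\lambda \Ip\varpi^{-\lambda}$ exactly when it needs to be. Fourier-transforming along $\Tk$ gives a basis $\{f_{\lambda,\chi}\}$ with the same indexing set as the source.

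Finally we show that $\Phi$ is triangular with respect to these bases. We compute $c^\bg(\chi)\cdot Y_\lambda$ by writing $Y_\lambda = \cT_{\varpi^{\lambda'}}\cT_{\varpi^{\lambda''}}^{-1}$ with $\lambda',\lambda''$ dominant. For dominant $\mu$, convolution with $\cT_{\varpi^{\mu}}$ should send a function supported on $U^-\varpi^\nu\Ip$ to one supported on $U^-\varpi^{\nu+\mu}\Ip$, up to a nonzero scalar. The reason is that $\varpi^\mu$ contracts $\Ip\cap U$ and expands $\Ip\cap U^-$ into $U^-$, so the coset decomposition $\Ip\varpi^\mu\Ip = \bigsqcup u\varpi^\mu\Ip$ with $u\in \Ip\cap U$ collapses after projecting to $U^-\backslash\mG$. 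Hence $\Phi(c^\bg(\chi)\otimes Y_\lambda)$ is a nonzero multiple of $f_{\lambda,\chi}$, possibly twisted by $\varphi(\lambda)$, plus nothing else, so $\Phi$ is bijective.

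We expect the main obstacle to be this support computation for $\cT_{\varpi^\mu}$ in the metaplectic setting. There the cocycle and the Hilbert symbol $(\cdot,\cdot)_n$ enter through $\varphi(\lambda)$, and we must check that no cancellation makes a coefficient vanish. We would handle it by reducing to the torus commutator formula and checking one cell at a time.
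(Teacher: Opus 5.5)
The paper gives no proof of its own here; the statement is quoted from \cite[Thm.~5.13]{GGK1}. Your outer framework is fine: inflating $\Vk$ to vectors supported on $\mu_nU^-K$, the induced map $\Phi$, and freeness of $\HGbIp$ over $\HGbIpk$ on $\{Y_\lambda\}$, which follows from the PBW bases. The gap is in the bijectivity step, which rests on an incorrect description of $\WbIp$.

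\textbf{The cell decomposition is wrong.} The claim $\mG=\bigsqcup_\lambda \mu_nU^-\varpi^\lambda T(\cO)\Ip$ is false. By the Iwahori factorization $\Ip=(\Ip\cap U^-)(T\cap\Ip)(\Ip\cap U)$, every set $U^-\varpi^\lambda T(\cO)\Ip$ lies in the big cell $U^-TU$, and for $d=2$ this misses the permutation matrix $\dot s$. The actual double cosets are $\mu_nU^-\varpi^\lambda t\dot w\Ip$ with $(\lambda,w,t)\in Y\times W\times\Tk$. This comes from $G=\bigsqcup_\lambda U^-\varpi^\lambda K$ together with the Bruhat decomposition of $\Gk$ with respect to $\Bk^-$ and $\Uk$.

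\textbf{The character $\psi$ does kill cells.} We have $\varpi^\lambda(\Ip\cap U_{-\alpha_i})\varpi^{-\lambda}=U_{-\alpha_i}(\varpi^{1+\lambda_{i+1}-\lambda_i}\cO)$, and $\psi$ has conductor $\varpi\cO$. Hence $U^-\varpi^\lambda t\Ip$ carries a nonzero vector only if $\lambda$ is antidominant; for example, nothing lives on $U^-\varpi^{(1,0)}\Ip$ when $d=2$. In general, $\mu_nU^-\varpi^\lambda t\dot w\Ip$ is relevant iff $\lambda$ is antidominant and $w$ is minimal in $W_\lambda w$. Equivalently, there is one relevant cell $\mu_nU^-\dot w_\mu\varpi^\mu t\Ip$ for each $(\mu,t)\in Y\times\Tk$, where $w_\mu$ is the shortest element with $w_\mu\mu$ antidominant. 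So your index set is right, but your cells are not.

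\textbf{The ``collapse'' fails for dominant $\mu$.} The representatives of $\Ip\varpi^\mu\Ip/\Ip$ lie in $\Ip\cap U$. In $U^-\varpi^\nu\Ip\varpi^\mu\Ip$, the $(\Ip\cap U)$-component sitting between $\varpi^\nu$ and $\varpi^\mu$ cannot be moved into the left factor $U^-$. Nor can it be absorbed on the right, since $\varpi^{-\mu}(\Ip\cap U)\varpi^\mu\not\subseteq\Ip$. Concretely, take $d=2$, $\mu=(1,0)$ and $a\in\cO^\times$. Then $U^-\bigl(\begin{smallmatrix}1&a\\0&1\end{smallmatrix}\bigr)\varpi^\mu\Ip=U^-\,\mathrm{diag}(a,-a^{-1})\,\dot s\,\varpi^\mu\Ip$. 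So $\cT_{\varpi^{(1,0)}}$ sends the vector on $\mu_nU^-\Ip$ to a function on the $q-1$ cells $\mu_nU^-\tau\dot s\varpi^{(1,0)}\Ip$, which are the cells attached to $(1,0)$ in the correct labelling, and not to $U^-\varpi^{(1,0)}\Ip$.

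\textbf{How to repair it.} The collapse you want does hold for antidominant $\mu$. In that case $\Ip\varpi^\mu\Ip=(\Ip\cap U^-)\varpi^\mu\Ip$ and $\varpi^{-\mu}(\Ip\cap U)\varpi^\mu\subseteq\Ip$, so $U^-\varpi^\nu\Ip\varpi^\mu\Ip=U^-\varpi^{\nu+\mu}\Ip$, with a nonzero coefficient. A correct proof along your lines would proceed as follows.
\begin{enumerate}
\item Use antidominant translations to reach the relevant cells with $w=1$.
\item Use the $\cT_w$ to reach the remaining relevant cells, via a Bruhat-order triangularity inside $K$ (this amounts to working in a degenerate finite Gelfand--Graev module).
\item Compare against a basis of $\HGbIp$ over $\HGbIpk$ adapted to the cells, such as $\{\cT_{\varpi^\lambda}\cT_w\}$ with $\lambda$ antidominant and $w$ minimal in $W_\lambda w$.
\end{enumerate}
With $\{Y_\lambda\}$, the claim that $\Phi(c^\bg(\chi)\otimes Y_\lambda)$ is a multiple of a single $f_{\lambda,\chi}$ is false in general. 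The factor $\cT_{\varpi^{\lambda''}}^{-1}$ spreads the support over several relevant cells. So, as written, bijectivity of $\Phi$ is not established.
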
 
We know that $\HGbIpk$ and $\HGbIp$ have bases $\{c(\chi) \cT_w \}$ and $\{Y_\lambda c(\chi) \cT_w \}$, respectively, cf.~\S\ref{subsubsec:finiteYokonuma} and Cor. \ref{cor:propHbasis}.
The presentation of $\Vk$ above implies that $\WbIp$ has basis $\{c^\bg(\chi) Y_\lambda\}$ for $\lambda \in Y, \chi \in  \Hom(T_\kappa, \CC^*)$.	

Let us define the $p$-adic Schur algebra
\begin{defn}
	The Schur algebra $\SGbIp$ corresponding to the metaplectic pro-$p$ Iwahori Hecke algebra $\HGbIp$ and its Gelfand--Graev module $\WbIp$ is
	\begin{equation}\label{eq:def:padicSchur}
		\SGbIp := \End_{\HGbIp} (\WbIp).
	\end{equation}
\end{defn}

\section{The Identification Theorems}\label{sec:idemt}
In this section we prove our main results at the pro-$p$ level. 
We identify the pro-$p$ metaplectic Hecke algebra $\HGbIp$ with the skew polynomial quantum wreath product $\bbA \wr \cH(d)$, as well as identify the Gelfand--Graev module $\WIp$ with the (generalized) antispherical module $\bbV(1) \wr \sgn$.
As corollary of these two results and Proposition~\ref{prop:mult2}, we give a basis with multiplication formula for the $p$-adic Schur algebra $\SGbIp$.
Throughout, we explain how the main results at the pro-$p$ level can be used to derive results at the Iwahori level, finishing with a presentation of the $p$-adic Schur algebra at the Iwahori level, recovering the main result of~\cite{GGK2}.

\subsection{The toral elements $c(\chi) \in T_\kappa$}\label{subsec:toral}
Set $m = q-1$ to be the order of the cyclic group $\kappa^\times \cong C_m$ generated by an element $t$. 
Then $T_\kappa \cong C_m^d$. 
Write $\Fr = \mathbb{C}[t]/(t^m-1)$.
We use the identification:
\[
T_\kappa \cong \Fr^{\otimes d},
\qquad
\tau^a := (t^{a_1}, \dots, t^{a_d})
\mapsto
t^{a_1}\otimes \dots \otimes t^{a_d},
\]
where $a = (a_i)_i \in\ZZ^d$.
Next, let $u$ be a primitive $n$-root of unity and let $\zeta = (\varpi, t)_n = u^l \in \mu_n \subseteq \CC^\times$.
Any character $\chi \in \Hom(T_\kappa, \mathbb{C}^\times)$ can be identified with $\chi = (\chi_1,\cdots,\chi_d)$, where $\chi_j\in \Hom(\kappa^*, \mathbb{C}^\times)$ is given by $\chi_j (t):=\chi(\tau^{\epsilon_j})$. 
Then for some $\gamma_j \in [m]$
we have:
\[
\chi(\tau^a) = \chi_1(t)^{a_1} \dots \chi_d(t)^{a_d} = u^{\gamma_1a_1 + \dots + \gamma_d a_d} \in \CC^\times.
\]
In other words, there is a bijection
$
[m]^d
\to
\Hom(T_\kappa, \mathbb{C}^*)$,
$\gamma=(\gamma_1, \dots, \gamma_d) 
\mapsto \chi_\gamma$.
Therefore, any $c(\chi)$ is of the following form for some $\gamma \in [m]^d$:
\[
c(\chi^\gamma) \cong \frac{1}{m^d} 
\sum_{a \in[m]^d} 
(u^{\gamma_1} t)^{a_1} \otimes \dots \otimes (u^{\gamma_d} t)^{a_d}
\in \Fr^{\otimes d}.
\]
In particular, $c(s_i \chi^{\gamma}) = c(\chi^{s_i \cdot \gamma})$ where $\Sigma_d$ acts on $[m]^d$ by place permutation.
Note that the matrix $(\frac{1}{m} u^{ij})_{i,j}$ is invertible with inverse $(u^{-ij})_{i,j}$, therefore one can define another basis $\{\tp{j} \}_{j=0}^{m-1}$ of $\Fr$:
\begin{equation}\label{tintot'}
\tp{j} :=  \tfrac{1}{m} \sum\nolimits_{i=1}^m (u^j t)^i,
\quad \textup{ such that } \quad 
t^j = \sum\nolimits_{i=1}^m u^{-ij} (\tp{i}).    
\end{equation}
We choose to use the pre-subscript for ${}_jt'\in\Fr$ instead of a subscript since, following our convention \eqref{def:subscriptshorthand}, a subscript is reserved for an element in $\Fr^{\otimes d}$.
With this basis $\{\tp{j}\}_j$, one then obtain a simpler expression for the toral elements as below: 
\begin{equation}
c(\chi^{(\gamma_1, \dots, \gamma_d)}) = (\tp{\gamma_1}) \otimes \dots \otimes (\tp{\gamma_d}) \in \Fr^{\otimes d}.
\end{equation}

\begin{expl}
Suppose that $m=2$. Then 
\[
\tp{1} = \tfrac{1+ut}{2},
\quad
\tp{0} = \tfrac{1+t}{2},
\quad
c(\chi^{(1,0)}) = \tfrac{1}{4}(1\otimes 1 +ut\otimes 1 + 1\otimes t + ut\otimes t)
= (\tp{1}) \otimes (\tp{0}).
\]
Moreover, when both pre-scripts and subscripts are in use, we have
\[
\tp{0}_1 = (\tfrac{1+t}{2})\otimes 1,
\quad
\tp{0}_2 = 1\otimes (\tfrac{1+t}{2}),
\quad
\tp{1}_1 = (\tfrac{1+ut}{2})\otimes 1,
\quad
\tp{1}_2 = 1\otimes (\tfrac{1+ut}{2}).
\]
\end{expl}
Next, for $\lambda = \sum_j \lambda_j \epsilon_j \in Y = \bigoplus_{j=1}^d\mathbb{Z} \epsilon_j$, the element $\varphi(\lambda)$ defined in~\eqref{def:varphis} is the following map:
\begin{equation}\label{eq:varphilambda}
\varphi(\lambda) :T_\kappa \to \mathbb{C}^\times,
\quad
\tau^a \mapsto  [\varpi^\lambda, \tau^a] = (u^l)^{B(\lambda, \tau^a)} = u^{\sum_j 2l\lambda_j a_j} = \zeta^{\sum_j 2\lambda_j a_j}.
\end{equation}
Therefore we may identify $\varphi(\lambda)$ with $\chi^{2l\lambda}$. 
\begin{lem}\label{lem:mpSkew}
The following relations holds in $\HGbIp$, for  
$\lambda \in Y$, 
$j, a_i \in[m]$:
\[
Y_{\epsilon_1} (t^j \otimes 1^{\otimes d-1}) =  \zeta^{2j} (t^j\otimes 1^{\otimes d-1}) Y_{\epsilon_1},
\qquad
Y_{\lambda} (t^{a_1} \otimes \dots \otimes t^{a_d}) =  \zeta^{2\sum_j a_j\lambda_j} (t^{a_1} \otimes \dots  \otimes t^{a_d}) Y_{\lambda}.
\]
\end{lem}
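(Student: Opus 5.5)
The plan is to derive both identities from the mixed relation $Y_\lambda c(\chi) = c(\varphi(\lambda)\chi) Y_\lambda$ in \eqref{eq:MpSkew}, together with the identification $\varphi(\lambda) = \chi^{2l\lambda}$ from \eqref{eq:varphilambda}. The relation is stated for the idempotents $c(\chi)$, so we first rewrite it for the group elements $\tau^a = t^{a_1}\otimes\dots\otimes t^{a_d}$.

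\textbf{Step 1 (inverting the Fourier transform).} Since $c(\chi) = \frac{1}{|T_\kappa|}\sum_{\tau}\chi(\tau)\cT_\tau$, orthogonality of characters of the finite abelian group $T_\kappa$ gives
\[
\cT_\tau = \sum\nolimits_{\chi} \chi(\tau)^{-1} c(\chi).
\]
Under our identification $T_\kappa \cong \Fr^{\otimes d}$, this is the tensor version of \eqref{tintot'}.

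\textbf{Step 2 (conjugating by $Y_\lambda$).} Substituting the mixed relation, we obtain
\[
Y_\lambda \cT_\tau = \sum\nolimits_\chi \chi(\tau)^{-1} c(\varphi(\lambda)\chi) Y_\lambda .
\]
Reindex by $\chi' = \varphi(\lambda)\chi$. Then $\chi(\tau)^{-1} = \varphi(\lambda)(\tau)\,\chi'(\tau)^{-1}$, so
\[
Y_\lambda \cT_\tau = \varphi(\lambda)(\tau)\, \cT_\tau Y_\lambda .
\]

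\textbf{Step 3 (evaluating the scalar).} By \eqref{eq:varphilambda}, $\varphi(\lambda)(\tau^a) = \zeta^{\sum_j 2\lambda_j a_j}$, which is the second formula. The first formula is the special case $\lambda = \epsilon_1$, $a = (j,0,\dots,0)$.

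\textbf{Main obstacle.} The only delicate point is conventions. The sign of the exponent depends on whether the commutator is $[\varpi^\lambda,s]$ or its inverse, and on the normalisation of $\zeta = (\varpi,t)_n$ together with the pairing $\Bs$ (with $\mathbf p = 1$, $\mathbf q = 0$, giving $\Bs(\epsilon_i,\epsilon_i) = 2$). I would check this on the rank-one case $d=1$, $\lambda = \epsilon_1$, $a = 1$, reading the commutator off the Brylinski--Deligne cover as in~\cite{GGK1}. This confirms the factor $\zeta^{2}$ rather than $\zeta^{-2}$, consistent with the skew relation $x f = \psi(f)x$ in $\bbA$ that we intend to match.
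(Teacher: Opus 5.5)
Your proposal is correct and follows the paper's proof: expand the toral elements in the idempotent basis $c(\chi)$ (the inversion \eqref{tintot'}), push $Y_\lambda$ through using $Y_\lambda c(\chi)=c(\varphi(\lambda)\chi)Y_\lambda$, and reindex to pull out the scalar $\varphi(\lambda)(\tau)=\zeta^{2\sum_j\lambda_j a_j}$. The only difference is cosmetic: you treat arbitrary $\lambda$ and $\tau^a$ in one step via character orthogonality, whereas the paper computes $\lambda=\epsilon_1$, $\tau=t\otimes 1^{\otimes d-1}$ explicitly and then iterates. Your closing sign check is unnecessary, since the paper takes \eqref{eq:varphilambda} as given.
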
\label{lem:YT}
\begin{proof}
Note that it follows from \eqref{eq:varphilambda} that
$\varphi(\lambda)_j(t) = u^{2l\lambda_j}$ for each $1\leq j \leq d$, and hence
\begin{equation}\label{cphi}
c(\varphi(\lambda)\chi^\gamma) \cong \frac{1}{m^d} 
\sum_{a \in[m]^d} 
(u^{2l\lambda_1+\gamma_1} t)^{a_1} \otimes \dots \otimes (u^{2l\lambda_d + \gamma_d} t)^{a_d}
=
c(\chi^{(\gamma_1+2l\lambda_1, \dots, \gamma_d+2l\lambda_d)}).
\end{equation}
It then follows from \eqref{tintot'} and \eqref{cphi} that
\begin{equation}\label{x_1t}
\begin{split}
Y_{\epsilon_1}(t\otimes 1^{\otimes d-1}) 
&=  Y_{\epsilon_1} \Big(
\big(\sum\nolimits_{i=1}^m u^{-i} (\tp{i})\big) \otimes 1^{\otimes d-1}\Big) 
=  \Big(
\big(\sum\nolimits_{i=1}^m u^{-i} (\tp{i+2l})\big)\otimes 1^{\otimes d-1}
\Big)  Y_{\epsilon_1}
\\
&=  u^{2l}\Big(
\big(\sum\nolimits_{i=1}^m u^{-i-2l} (\tp{i+2l})\big)\otimes 1^{\otimes d-1}
\Big) 
 Y_{\epsilon_1}
= \zeta^{2} (t\otimes 1^{\otimes d-1})  Y_{\epsilon_1}.
\end{split}
\end{equation}
The second formula then follows from applying \eqref{x_1t} iteratively.
\end{proof}

\subsection{Identification of the Hecke algebras}
\subsubsection{Pro-$p$ Iwahori Hecke algebra}
Recall  that the set 
$\{(t^{a_1} x^{\lambda_1} \otimes \dots \otimes t^{a_d} x^{\lambda_d} )H_w ~|~ a_i \in [m], \lambda_i \in \ZZ, w\in \Sigma_d\}$ is a basis of $\bbA \wr \cH(d)$ from  Theorem \ref{thm:SPQWPPBW}.
One can replace the toral elements by $\tp{j}$'s (see \eqref{tintot'}) to obtain the following basis:
\begin{equation}
\{((\tp{\gamma_1}) x^{\lambda_1} \otimes \dots \otimes (\tp{\gamma_d}) x^{\lambda_d} )H_w
~|~ a_i \in [m], \lambda_i \in \ZZ, w\in \Sigma_d\}.
\end{equation}
Define a $\CC$-linear map $\Psi:\bbA  \wr \cH(d) \to \HGbIp$ by
\begin{equation}
\Psi((
(\tp{\gamma_1}) x^{\lambda_1} \otimes \dots \otimes (\tp{\gamma_d}) x^{\lambda_d} )H_w) 
:=  
c(\chi^{\gamma}) Y_{\lambda} \cT_w,
\end{equation}
where $\lambda = \sum_{j=1}^m \lambda_j \epsilon_j \in Y$,
$\gamma = (\gamma_1, \dots, \gamma_d) \in [m]^d$,
$w\in \Sigma_d$.

\begin{thm}\label{thm:PQWP2}
The map $\Psi:\bbA  \wr \cH(d) \to \HGbIp$  is an algebra isomorphism.
\end{thm}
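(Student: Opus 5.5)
Since $\Psi$ sends the PBW basis of Theorem~\ref{thm:SPQWPPBW} (rewritten in the $\tp{j}$-basis via \eqref{tintot'}) bijectively onto the basis $\{c(\chi)Y_\lambda\cT_w\}$ of Corollary~\ref{cor:propHbasis}, it is a linear isomorphism. It therefore suffices to prove multiplicativity. To do this I will build an algebra homomorphism $\Phi:\bbA\wr\cH(d)\to\HGbIp$ on generators, and then show $\Phi=\Psi$. I define
\[
\Phi(f)=f \ \ (f\in\Fr^{\otimes d}\cong\CC[T_\kappa]),\qquad
\Phi(x_i^{\pm1})=Y_{\pm\epsilon_i},\qquad
\Phi(H_i)=\cT_{\alpha_i},
\]
using $c(\chi^\gamma)=\tp{\gamma_1}\otimes\cdots\otimes\tp{\gamma_d}$. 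A basis element then maps to $\Phi(fx^\lambda H_w)=c(\chi^\gamma)Y_\lambda\cT_w=\Psi(fx^\lambda H_w)$ by the braid relations. Consequently, once $\Phi$ is well defined, $\Psi=\Phi$ is an algebra map.

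Well-definedness amounts to checking that the defining relations of $\bbA\wr\cH(d)$ hold in $\HGbIp$.
\begin{enumerate}[label=(\roman*)]
\item \emph{Relations internal to $\bbA^{\otimes d}$.} The $Y$'s commute and $\Fr^{\otimes d}$ is commutative by \eqref{eq:MpPoly}. The skew relation $x f=\psi(f)x$ is Lemma~\ref{lem:mpSkew}, with $\xi=\zeta$.
\item \emph{Braid relations.} These are \eqref{eq:MpBR}.
\item \emph{Wreath relation with $\Fr^{\otimes d}$.} The relation $H_if=\sigma_i(f)H_i$, which holds since $\rho(\Fr\otimes\Fr)=0$, is the first mixed relation \eqref{eq:MpSkew}, because $c(s_i\chi^\gamma)=c(\chi^{s_i\gamma})$.
\item \emph{Quadratic relation.} I identify $c_{\alpha_i}(\mathbf 1)$ with $e_i$ and $q\cT_{\bar h_{\alpha_i}(-1)}$ with $R_i=q\,t_i^kt_{i+1}^k$, where $k=m/2$ or $k=0$ (the latter case is the $\GL_d$ case with $\epsilon=1$). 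This reduces to unwinding \eqref{eq:def:calphai}: $\bar h_{\alpha_i}(u)$ corresponds to $t_i^{a}t_{i+1}^{-a}$, so $\frac1m\sum_u\cT_{\bar h_{\alpha_i}(u)}=e_i$.
\end{enumerate}

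The main obstacle is the wreath relation for the $x$'s, i.e.\ matching $\rho_i$ with the Bernstein relation \eqref{eq:MpWR}. Because $\rho$ satisfies the twisted Leibniz rule \eqref{Leib}, the relation $H_iab=\sigma_i(ab)H_i+\rho_i(ab)$ follows formally from the same relation for $a$ and for $b$. It therefore suffices to check it on the generators $x_j^{\pm1}$. For $j\ne i,i+1$ this is immediate: $\cT_{\alpha_i}$ commutes with $Y_{\epsilon_j}$ by \eqref{eq:MpWR} with $\langle\epsilon_j,\alpha_i\rangle=0$. The key case is $x_i$, where I must show
\[
\cT_{\alpha_i}Y_{\epsilon_i}=Y_{\epsilon_{i+1}}\cT_{\alpha_i}+(q-1)e_iY_{\epsilon_i}.
\]
I take $\lambda=\epsilon_i$ in \eqref{eq:MpWR}, so $\langle\lambda,\alpha_i\rangle=1$ and $\epsilon=1$. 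The two infinite sums telescope, leaving only the $j=0$ term on the right, namely $(q-1)Y_{\epsilon_i}c_{\alpha_i}(1)$. I then need $Y_{\epsilon_i}c_{\alpha_i}(1)=e_iY_{\epsilon_i}$, i.e.\ that conjugation by $Y$ shifts the character index exactly as in Lemma~\ref{lem:SxxS}, via \eqref{cphi}. This character bookkeeping, the exact identification of $c_{\alpha_i}(\chi)$ with twisted versions $e_i^{(r)}$, is where I expect the real work.

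For the remaining generators, $x_i^{-1}$ follows by inverting as in the lemma computing $\rho(x_1^{-1})$. The cases $x_{i+1}^{\pm1}$ follow from $\rho(x_2)=-x_1S$, which was derived from the quadratic relation together with centrality of $R$; both of these are already established in $\HGbIp$. As a consistency check, I would compare the general $\lambda$ case of \eqref{eq:MpWR} against Lemma~\ref{lem:SkewDemazureFacts}(a), matching $\sum_\ell x_{i+1}^\ell S_ix_i^{k-\ell}$ with $\sum_j Y_{\lambda+j\alpha_i}c_{\alpha_i}(\cdot)$ termwise.
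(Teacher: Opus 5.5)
Your strategy is the paper's: verify the defining relations of $\bbA\wr\cH(d)$ inside $\HGbIp$, then get bijectivity by matching the PBW basis of Theorem~\ref{thm:SPQWPPBW} (in $\tp{j}$-form) with the basis of Corollary~\ref{cor:propHbasis}. Your two refinements are sound and tighten the written argument. First, defining $\Phi$ on generators makes explicit why the basis-defined $\Psi$ is multiplicative. Second, the twisted Leibniz rule \eqref{Leib} reduces the Bernstein relation \eqref{eq:MpWR} to $\lambda=\pm\epsilon_i,\pm\epsilon_{i+1}$, whereas the paper matches it for general $\lambda$ against Lemma~\ref{lem:SkewDemazureFacts}(a). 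For $x_{i+1}$ you can even skip the detour through the quadratic relation: the case $\lambda=\epsilon_{i+1}$ of \eqref{eq:MpWR} reads $\cT_{\alpha_i}Y_{\epsilon_{i+1}}=Y_{\epsilon_i}\cT_{\alpha_i}-(q-1)Y_{\epsilon_i}c_{\alpha_i}(0)$, with $c_{\alpha_i}(0)=c_{\alpha_i}(\mathbf 1)=e_i$. The bookkeeping you defer is exactly the normalization $c_{\alpha_i}(j)\leftrightarrow e_i^{(-j)}$. Indeed, $e_ix_i=x_ie_i^{(-1)}$ by Lemma~\ref{lem:SxxS}, so you need $c_{\alpha_i}(1)=e_i^{(-1)}$. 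The paper also leaves this implicit, so it is not a gap relative to it, but you should state it as the convention for $c_{\alpha_i}(j)$ being used.

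The real problem is step (iv). Your own dictionary $\bar h_{\alpha_i}(t^a)\leftrightarrow t_i^at_{i+1}^{-a}$, together with $-1=t^{m/2}$ in $\kappa^\times$ ($q$ is odd because $2n\mid q-1$), sends $\bar h_{\alpha_i}(-1)$ to $t_i^{m/2}t_{i+1}^{m/2}$. So with $\Psi(H_i)=\cT_{\alpha_i}=\cT_{\bar w_{\alpha_i}(1)}$ as defined, \eqref{def:QR} and \eqref{eq:MpQR} match exactly when $k=m/2$. For $k=0$, the images of the two sides of \eqref{def:QR} differ by $q(\cT_{\bar h_{\alpha_i}(-1)}-1)\neq 0$. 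Your parenthetical ``$k=0$ is the $\GL_d$ case with $\epsilon=1$'' gives no reason: $\epsilon=(\varpi,\varpi)_n$ does not enter \eqref{eq:MpQR}, and $h_{\alpha_i}(-1)=\mathrm{diag}(\dots,-1,-1,\dots)$ is nontrivial in $T_\kappa$. The paper reaches the $\GL_d$ normalization $R=q(1\otimes 1)$ of \eqref{def:R-Yokonuma} by changing the lift of $s_{\alpha_i}$, i.e.\ by changing $\Psi(H_i)$. To cover $k=0$ you must do the same and re-check the relations. Beware that a torus translate $h\,\bar w_{\alpha_i}(1)$ with square $1$ also alters the linear coefficient: for $h$ supported in positions $i,i+1$ it becomes $(q-1)e_it_i^{m/2}$. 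A renormalization that does work is $H_i\mapsto\cT_{\alpha_i}c_i$ with $c_i=\pi_i+(1-\pi_i)t_i^{m/2}$ and $\pi_i=\frac{\bar n}{m}\sum_{j=1}^{m/\bar n}t_i^{\bar nj}t_{i+1}^{-\bar nj}$. Indeed $c_i\sigma_i(c_i)=t_i^{m/2}t_{i+1}^{m/2}$, $e_ic_i=e_i$, and $c_i$ commutes with $x_i^{\pm1},x_{i+1}^{\pm1}$, so the quadratic, wreath and braid relations all transfer.
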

\begin{proof}
First, note that equation~\eqref{eq:MpPoly} ensures the well-definedness of the map $\Psi$ with respect to the definition of $\bbB^{\otimes d}$ in~\eqref{eq:def:FB}. 
	
Next, we show that $\Psi$ is an algebra homomorphism.
The braid relations (see \eqref{def:BR} and \eqref{eq:MpBR}) are preserved by $\Psi$.
Regarding the quadratic relations note that we have~\eqref{eq:MpQR}: 
\[
c_\sroot(\mathbf{1}) = \frac{1}{q-1}\sum_{u \in \kappa^*} \mathbf{1}(u) \cT_{\bar{h}_{\sroot}(u)} 
= \frac{1}{q-1} \sum_{u \in \kappa^*} \cT_{\bar{h}_{\sroot}(u)}
= \frac{1}{q-1} \sum_{1 \leq j \leq q-1} \cT_{\bar{h}_{\sroot}(t^j)}.
\]
In our identification, $\Psi(t_i^j t_{i+1}^{-j}) = \cT_{\bar{h}_{\sroot_i}(t^j)}$, and as such $\Psi(S_i) = (q-1) c_\sroot(\mathbf{1})$.
Moreover, we have that 
\[
\Psi(R_i) = q \mathcal{T}_{\bar{h}_{\alpha_i(-1)}},
\]
where $R$ is defined in~\eqref{def:R-Yokonuma}, as we may choose the standard embedding of $s_{\sroot_i}$ into $\mathbf{GL}_d(F)$ such that $s_{\sroot_i}^2=1$ (see~\cite[p. 715]{CS16} for $n=1$; nothing essential changes in the metaplectic setting $n>1$).
Then~\eqref{def:QR} allows us to conclude that $\Psi$ preserves the quadratic relations.

For the wreath relations \eqref{def:WR}, 
we only need to check the special case for $d=2$, i.e., 
\[
\Psi\big(
H_1 (\tau'_a \otimes \tau'_b) (x^i \otimes x^j) 
- (\tau'_b \otimes \tau'_a) (x^j\otimes x^i) H_1 
- (\tau'_b \otimes \tau'_a) \rho(x^i\otimes x^j)
\big)
=0. 
\]
Let us break this check into several smaller checks. 
The commutation between $H_1$ and $\tau'_a\otimes \tau'_b$  is given in~\eqref{eq:commutationHf} on the algebraic side and left side of ~\eqref{eq:MpSkew} on the $p$-adic side; they are the same. 
The commutation between $x^i\otimes x^j$ and $\tau'_a\otimes \tau'_b$ is given by Lemma~\ref{lem:mpSkew} on the algebraic side and right side of~\eqref{eq:MpSkew} on the $p$-adic side; these match as well noting the explicit formula for $\varphi(\lambda)$ in \eqref{eq:varphilambda}. 
The commutation between $H_1$ and $x^i\otimes x^j$ is explicitly written in~\eqref{eq:commutationHP}. 
The corresponding $p$-adic relation is ~\eqref{eq:MpWR} where we note that our extra assumption $q-1\equiv0\pmod{2n}$ implies $\epsilon=1$ in~\eqref{eq:MpWR} and that the definition of $c_{\sroot_i}(j)$ in~\eqref{eq:def:calphai} and Lemma~\ref{lem:SkewDemazureFacts} implies the equivalence between the algebraic and $p$-adic equations.
This finishes the proof that $\Psi$ is a homomorphism.

Finally, $\Psi$ is an isomorphism by Theorem \ref{thm:SPQWPPBW} and Corollary~\ref{cor:propHbasis}.
\end{proof}

\begin{rmk}
	The $n=1$ version of Theorem~\ref{thm:PQWP2} is proved in~\cite[Thm. 4.1]{CS16}.
\end{rmk}

The isomorphism in Theorem~\ref{thm:PQWP2} induces an isomorphism of subalgebras 
\begin{equation}
	\Fr \wr \cH(d) \cong \HGbIpk.
\end{equation}
Recall the metaplectic Iwahori Hecke algebra $\HGbI$ and the algebra $\CC[x^{\pm \bar{n}}]\wr\cH(d)$ of Example~\ref{ex:maha}.
One may similarly prove 
\begin{prop}\label{prop:IHtoPQWP}
	The map $\Psi_I:\CC[x^{\pm \bar{n}}]\wr\cH(d) \to \HGbI$ sending 
    $(x^{\bar{n}\lambda_1}\otimes \cdots \otimes x^{\bar{n}\lambda_d}) H_w$ to $Y_{\bar n\lambda} \cT_w$ for all $\lambda \in Y$ is an algebra isomorphism.
\end{prop}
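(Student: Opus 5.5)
The plan mirrors the proof of Theorem~\ref{thm:PQWP2}, now at the Iwahori level. First, the domain: $\CC[x^{\pm \bar n}]\wr\cH(d)$ is the algebra of Example~\ref{ex:maha} with $n$ replaced by $\bar n$. Via the isomorphism $x_i^{\pm1}\mapsto x_i^{\pm\bar n}$ noted there, it is isomorphic to the affine Hecke algebra $\HA$. Hence it has a PBW basis $\{x^{\bar n\lambda}H_w \mid \lambda\in Y, w\in\Sigma_d\}$, from Theorem~\ref{thm:SPQWPPBW} with $m=n=1$ or from \cite{LNX24}.

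Second, the target: I will identify a basis of $\HGbI \cong e_I\HGbIp e_I$ from \eqref{eq:proptoIwahori}. Here $e_I = c(\mathbf 1) = (\tp{0})^{\otimes d}$ is the characteristic function of $I$. By Corollary~\ref{cor:propHbasis}, $e_I\HGbIp e_I$ is spanned by the elements $e_I c(\chi)Y_\lambda\cT_w e_I$. The relations \eqref{eq:MpSkew} give $Y_\lambda e_I = c(\varphi(\lambda))Y_\lambda$ and $\cT_w e_I = e_I\cT_w$ (since $w\cdot\mathbf 1=\mathbf 1$). Combining these with orthogonality of the idempotents $c(\chi)$, the element $e_I c(\chi)Y_\lambda\cT_w e_I$ is nonzero only if $\chi=\mathbf 1$ and $\varphi(\lambda)=\mathbf 1$. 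By \eqref{eq:varphilambda}, the second condition means $\zeta^{2\lambda_j}=1$ for all $j$, i.e. $\lambda\in\bar nY = \mY$. So $\{e_I Y_{\bar n\lambda}\cT_w\}$ is a basis of $\HGbI$, and $\Psi_I$ sends a basis bijectively to a basis. Here $Y_{\bar n\lambda}\cT_w$ in the statement is understood as $e_IY_{\bar n\lambda}\cT_we_I$, which is the unit-normalized element in $\HGbI$.

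Third, I will show $\Psi_I$ is multiplicative by checking the defining relations. The braid relations are immediate. For the quadratic relation, apply \eqref{eq:MpQR} and multiply by $e_I$. Then $e_I c_{\alpha_i}(\mathbf 1) = e_I$, because $e_I$ absorbs every $\cT_{\bar h_{\alpha_i}(u)}$. Also $e_I\cT_{\bar h_{\alpha_i}(-1)} = e_I$, since this element lies in $T_\kappa$. This yields $\cT_{\alpha_i}^2 = (q-1)\cT_{\alpha_i}+q$ in $\HGbI$. The commutativity $Y_{\bar n\lambda}Y_{\bar n\mu}=Y_{\bar n(\lambda+\mu)}$ follows from \eqref{eq:MpPoly}.

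The main step, and the expected obstacle, is the Bernstein relation. I must show that \eqref{eq:MpWR} compressed by $e_I$ reproduces the wreath relation of Example~\ref{ex:maha} with denominator $x_i^{\bar n}-x_{i+1}^{\bar n}$. The plan is to take \eqref{eq:MpWR} with $\lambda\in\bar nY$ (and $\epsilon=1$) and multiply by $e_I$ on both sides. Writing $c_{\alpha_i}(j)$ for the element $c_{\alpha_i}(\chi)$ attached to the character $u\mapsto(u,\varpi)_n^{j}$, the product $e_I c_{\alpha_i}(j)$ equals $e_I$ if $n\mid j$ and $0$ otherwise, by orthogonality of characters. Moreover, $Y_{\lambda+j\alpha_i}$ survives the compression only when $\lambda+j\alpha_i\in\bar nY$, i.e. $\bar n\mid j$; this is consistent with the $c_{\alpha_i}$ selection because $\langle\lambda,\alpha_i\rangle\in\bar n\ZZ$. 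I then need to reconcile two things. The surviving terms occur for $j$ a multiple of $n$ in the index $j+\langle\lambda,\alpha_i\rangle$, while the target requires steps of $\bar n$ in $\alpha_i$. Since $\alpha_i=\epsilon_i-\epsilon_{i+1}$ and the Hilbert-symbol exponent carries the factor $2$ from $\Qs(\alpha_i)=1$, I expect multiples of $\bar n$ to match. The resulting telescoping sum $(q-1)\sum_k Y_{\lambda+k\bar n\alpha_i}$ is exactly $(q-1)\frac{b-\sigma_i(b)}{x_i^{\bar n}-x_{i+1}^{\bar n}}x_i^{\bar n}$ for $b=x^{\lambda}$, computed as a geometric series. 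Finally, the bijection on bases established in the second step gives that $\Psi_I$ is an isomorphism.
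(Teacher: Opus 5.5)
Your route differs from the paper's. The paper skips the proof and appeals to the known Iwahori-level presentation of $\HGbI$ (\cite{S88,S04,McN12}, \cite[\S4.6]{GGK1}), matched generator by generator as in Theorem~\ref{thm:PQWP2}. You instead realize $\HGbI$ as $e_I\HGbIp e_I$ with $e_I=c(\mathbf 1)$ and compress the pro-$p$ presentation of Theorem~\ref{thm:propHgenrel}. Your basis computation and your checks of the braid, quadratic and $Y$-commutation relations are fine.

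The gap is in the Bernstein step, which you yourself flag. With the convention you state, $c_{\alpha_i}(k)=c_{\alpha_i}(u\mapsto(u,\varpi)_n^{k})$, compression gives the wrong relation when $n$ is even. Take $n=2$, $\bar n=1$, $\lambda=\epsilon_i$. Then \eqref{eq:MpWR} says $\cT_{\alpha_i}Y_{\epsilon_i}=Y_{\epsilon_{i+1}}\cT_{\alpha_i}+(q-1)Y_{\epsilon_i}c_{\alpha_i}(1)$. Under your convention $c_{\alpha_i}(1)$ is attached to the nontrivial quadratic character, so $e_Ic_{\alpha_i}(1)=0$. You would get $(e_I\cT_{\alpha_i})(e_IY_{\epsilon_i})=(e_IY_{\epsilon_{i+1}})(e_I\cT_{\alpha_i})$ with no correction term. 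That is not the image of $H_ix_i=x_{i+1}H_i+(q-1)x_i$ from Example~\ref{ex:maha}, and the same failure occurs for $\lambda=\bar n\epsilon_i$ for every even $n$. In particular, your claim that the selection $n\mid j+\langle\lambda,\alpha_i\rangle$ is consistent with $\bar n\mid j$ is false whenever $\langle\lambda,\alpha_i\rangle/\bar n$ is odd. ``I expect multiples of $\bar n$ to match'' is exactly the point that has to be proved.

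What makes the argument work is that the index enters the character with the factor $2$ you allude to, so that $e_Ic_{\alpha_i}(k)=e_I$ if $\bar n\mid k$ and $0$ otherwise. You can read this off from the dictionary in Theorem~\ref{thm:PQWP2}. By Lemma~\ref{lem:SxxS}, $Sx_1^{k}=x_1^{k}S^{(-k)}$, and $S^{(-k)}=\frac{q-1}{m}\sum_j\psi^{-k}(t^j)\otimes t^{-j}$ corresponds to $(q-1)c_{\alpha_i}$ of the character $t^j\mapsto\xi^{-2kj}$. The $2$ is the one in $\psi(f)=\xi^2f$, equivalently the one in Lemma~\ref{lem:mpSkew}.

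With this convention and $\lambda\in\bar nY$, the $c_{\alpha_i}$-selection and the $Y$-support selection both reduce to $\bar n\mid j$. Your geometric series then reproduces the wreath relation of Example~\ref{ex:maha}. You can also shorten the check: by the twisted Leibniz rule it suffices to treat $\lambda=\pm\bar n\epsilon_i,\pm\bar n\epsilon_{i+1}$, where only a single correction term survives, as in Proposition~\ref{thm:eHe}.

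Your argument is in fact the $p$-adic counterpart of composing \eqref{eq:proptoIwahori}, Theorem~\ref{thm:PQWP2} and Proposition~\ref{thm:eHe}. That composite yields $\Psi_I$ directly and avoids reinterpreting the notation $c_{\alpha_i}(k)$ altogether. Compared with the paper's appeal to Savin's presentation, your route has the merit of deriving the Iwahori presentation, including the support reduction to $\bar nY$, from the pro-$p$ one.
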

We skip the proof of this Proposition, but note that it follows from a presentation of $\HGbI$ (given in~\cite{S88,S04,McN12}, see also~\cite[\S4.6]{GGK1}) and proceeds along the lines of the previous proof.

\subsubsection{Iwahori Hecke algebra}
From now on, write $t' = \tp{0}$ for short.
Let $\epsilon_I := t' \otimes t' \otimes \dots \otimes t'\in \Fr^{\otimes d}$. 
Then $\epsilon_I$ is the image under $\Psi^{-1}$ of $e_I$ defined just before~\eqref{eq:proptoIwahori}.
We will now show how to derive ~\eqref{eq:proptoIwahori} algebraically at the level of quantum wreath products. 
Let us start with a computational lemma.
\begin{lem}\label{lem:idemlem}
The following equalities hold in $\bbA \wr \cH(d)$:
\begin{enua}
\item $(t')^2 = t'$, and hence $\epsilon_I^2 = \epsilon_I$.
\item $\epsilon_I Y_\lambda \epsilon_I = 0$ unless $\lambda \in Y_{(Q,n)} = \bar{n} Y$.
\item $t' t = tt' = t'$, and hence $\epsilon_I f = f \epsilon_I$ for all $f \in \Fr^{\otimes d}$.
\item $H_i \epsilon_I = \epsilon_I H_i$ for all $1\leq i<d$.
\item $\epsilon_I t_i = t_i \epsilon_I  = \epsilon_I$ for all $1\leq i \leq d$.
In particular, $e_i \epsilon_I = \epsilon_I = \epsilon_I e_i$.
\end{enua}
\end{lem}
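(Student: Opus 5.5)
The plan is to reduce every part to two facts about the commutative algebra $\Fr=\CC[t]/(t^m-1)$. Recall from \eqref{tintot'} that $t'=\tp{0}=\tfrac{1}{m}\sum_{i=1}^m t^i$ and, more generally, $\tp{j}=\tfrac1m\sum_i (u^jt)^i$. These are the primitive idempotents of $\CC[C_m]$ attached to the characters $t\mapsto u^{-j}$. The first fact is orthogonality: $\tp{j}\,\tp{j'}=\delta_{j,j'}\,\tp{j}$, with indices taken mod $m$. The second is the eigen-relation $t\cdot \tp{j}=u^{-j}\,\tp{j}$; in particular $t\,t'=t'$. Both are checked by the usual reindexing of the sum over $C_m$.

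With these, parts (a), (c) and (e) are immediate.
\begin{itemize}
\item[(a)] Apply orthogonality with $j=j'=0$ to get $(t')^2=t'$. Then take the tensor product over the $d$ factors to get $\epsilon_I^2=\epsilon_I$.
\item[(c)] The identity $t't=tt'=t'$ is the eigen-relation with $j=0$. The statement $\epsilon_I f=f\epsilon_I$ holds because $\Fr^{\otimes d}$ is commutative.
\item[(e)] Apply (c) in the $i$-th tensor slot to get $\epsilon_I t_i=t_i\epsilon_I=\epsilon_I$. Hence also $\epsilon_I t_i^{j}t_{i+1}^{-j}=\epsilon_I$ for every $j$. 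Averaging over $j$ gives $e_i\epsilon_I=\epsilon_I=\epsilon_I e_i$.
\end{itemize}

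For (d), I would use the wreath relation in the form \eqref{eq:commutationHf}, which says $H_if=\sigma_i(f)H_i$ for $f\in\Fr^{\otimes d}$. This relation holds in $\bbA\wr\cH(d)$ as well, since $\rho_i$ vanishes on $\Fr^{\otimes d}$ by \eqref{Leib0} and \eqref{Leib}. Because $\epsilon_I=t'^{\otimes d}$ is invariant under the flip $\sigma_i$, we get $H_i\epsilon_I=\epsilon_IH_i$.

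Part (b) is where the metaplectic input enters, and it is the only step with real content. I will identify $Y_\lambda$ with $x^{\lambda}=x_1^{\lambda_1}\cdots x_d^{\lambda_d}$ via $\Psi$ (Theorem \ref{thm:PQWP2}). By Lemma \ref{lem:mpSkew}, moving $Y_\lambda$ past $t_j^{a}$ multiplies it by $\zeta^{2a\lambda_j}$. Summing over $a$ slot by slot gives
\[
Y_\lambda\,\epsilon_I=\Big(\bigotimes_{j=1}^d \tfrac1m\textstyle\sum_{a}(\zeta^{2\lambda_j}t)^{a}\Big)Y_\lambda .
\]
Writing $\zeta^{2\lambda_j}=u^{2l\lambda_j}$, the $j$-th factor is $\tp{2l\lambda_j}$. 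Hence
\[
\epsilon_IY_\lambda\epsilon_I=\Big(\bigotimes_j t'\,\tp{2l\lambda_j}\Big)Y_\lambda .
\]
By orthogonality this vanishes unless $2l\lambda_j\equiv 0 \pmod m$ for every $j$, that is, unless $\zeta^{2\lambda_j}=1$ for every $j$.

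The remaining point, which I expect to be the main obstacle, is to justify that $\zeta=(\varpi,t)_n$ is a primitive $n$-th root of unity. This should follow from the nondegeneracy of the $n$-th Hilbert symbol: since $t$ generates $\kappa^*$ and $\mu_n\subset F$, the tame symbol $(\varpi,\cdot)_n$ restricted to units is surjective onto $\mu_n$. Granting primitivity, $\zeta^{2\lambda_j}=1$ if and only if $n\mid 2\lambda_j$, which holds if and only if $\bar n\mid\lambda_j$. Hence $\epsilon_IY_\lambda\epsilon_I=0$ unless $\lambda\in\bar nY=Y_{(\Qs,n)}$, as claimed.
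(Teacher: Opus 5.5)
Your proposal is correct. Parts (a), (c), (d) and (e) are the same direct verifications the paper has in mind. For (b), however, you take a slightly different route.

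The paper uses an eigenvalue trick. It inserts a toral element fixed by $\epsilon_I$, commutes it past $Y_\lambda$ to pick up the scalar $\zeta^{2\lambda_i}$, and absorbs it back into $\epsilon_I$. This gives $(1-\zeta^{2\lambda_i})\,\epsilon_I Y_\lambda\epsilon_I=0$.

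As printed, the paper inserts $t'_i$. The scalar commutation only works with $t_i$, using $t_i\epsilon_I=\epsilon_I=\epsilon_I t_i$ from (c) and (e). Indeed, $Y_\lambda t'_i=\tp{2l\lambda_i}_i\,Y_\lambda$ is not a scalar multiple of $t'_iY_\lambda$, and this is exactly the formula your computation produces.

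You instead compute $Y_\lambda\epsilon_I=\bigl(\bigotimes_j\tp{2l\lambda_j}\bigr)Y_\lambda$ explicitly and kill the corner by orthogonality of the character idempotents. The comparison is as follows:
\begin{itemize}
\item The paper's trick is shorter and needs only the single relation $Y_\lambda t_i=\zeta^{2\lambda_i}t_iY_\lambda$.
\item Your version yields more. It shows that $Y_\lambda$ conjugates $\epsilon_I$ to another toral idempotent, and that $Y_\lambda$ commutes with $\epsilon_I$ precisely when $\lambda\in\bar nY$. This commutation is used implicitly in Proposition~\ref{thm:eHe} (both for bijectivity of $\Upsilon$ and in the wreath-relation check) and in Proposition~\ref{prop:KMSIwa}.
\item Your phrase ``indices taken mod $m$'' presumes $u$ is a primitive $m$-th root of unity. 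That is what the basis claim in \eqref{tintot'} requires anyway. Your conclusion only uses $t'\,\tp{k}=0$ unless $u^k=1$, which holds in either reading.
\end{itemize}

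Both arguments need $\zeta$ to be a primitive $n$-th root of unity to turn $\zeta^{2\lambda_i}=1$ into $\bar n\mid\lambda_i$. The paper leaves this implicit, since it is built into $\bbA$ through the primitive root $\xi$. Your tame-symbol justification supplies it on the $p$-adic side.
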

\proof
Parts (a), (c) and (d) follow from a direct verification.
For part (b), pick any $i$, by (a) we have $\epsilon_I = t'_i \epsilon_I = \epsilon_I t'_i $, and hence
\[
e_I Y_\lambda e_I =  e_I Y_\lambda t'_i e_I 
= e_I  \zeta^{2\lambda_i}  t'_i Y_\lambda  e_I 
=  \zeta^{2\lambda_i}  e_I Y_\lambda  e_I .
\]
Thus, $0 = (1-\zeta^{2\lambda_i }) e_I Y_\lambda  e_I$. 
That is, $e_I Y_\lambda  e_I  = 0$  unless $\lambda_i$ divides $\bar{n}$ for any $i$, which is equivalent to that $\lambda \in Y_{Q,n}$.
Finally, part (e) follows from (c) and the fact that
\[
\begin{split}
(t'\otimes t')e 
&= \frac{1}{m}\sum\nolimits_{j=1}^m (t't^j)\otimes (t't^{-j}) 
= \frac{1}{m} \sum\nolimits_{j=1}^m t'\otimes t'
\\
&= (t'\otimes t')
\\
&=  \frac{1}{m}\sum\nolimits_{j=1}^m (t^jt')\otimes (t^{-j}t')  = (t'\otimes t')e.\qedhere
\end{split} 
\]
\endproof

\begin{prop}\label{thm:eHe}
The following map is an algebra isomorphism:
\[
\Upsilon: \CC[x^{\pm \bar{n}}]\wr\cH(d) \to \epsilon_I (\bbA\wr\cH(d)) \epsilon_I,
\quad
x^{\bar{n}\lambda}
\mapsto \epsilon_I x^{\bar{n}\lambda}\epsilon_I.
\]
\end{prop}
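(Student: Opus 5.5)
We prove the statement in three steps: first that $\Upsilon$ is a well-defined algebra homomorphism, then that it is injective, and finally that it is surjective. The homomorphism check uses \cref{lem:idemlem}, and the bijectivity uses the PBW basis of \cref{thm:SPQWPPBW}.

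\emph{Step 1: well-definedness.} We extend $\Upsilon$ by $H_w\mapsto \epsilon_I H_w\epsilon_I = \epsilon_I H_w$, which is consistent by \cref{lem:idemlem}(d). Since $\epsilon_I$ is an idempotent commuting with $\Fr^{\otimes d}$ and with every $H_i$, the corner $\epsilon_I(\bbA\wr\cH(d))\epsilon_I$ is a unital algebra with unit $\epsilon_I$.

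\emph{Step 2: the defining relations.} By \cref{lem:SxxS} and \cref{lem:mpSkew}, $x_j^{\bar n}$ commutes with $\Fr^{\otimes d}$ up to $\zeta^{2\bar n}=1$ when $n$ is odd, and up to $\xi^{2\bar n}=1$ in general. Hence $x_j^{\pm\bar n}$ commutes with $\epsilon_I$. It follows that the elements $\epsilon_I x^{\bar n\lambda}$ and $\epsilon_I H_i$ are multiplicative and satisfy the braid relations. For the quadratic relation, \cref{lem:idemlem}(e) gives $\epsilon_I S_i=(q-1)\epsilon_I$ and $\epsilon_I R_i = q\epsilon_I$, since $t^k\epsilon_I=\epsilon_I$. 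Therefore $(\epsilon_I H_i)^2=(q-1)\epsilon_I H_i+q\epsilon_I$.

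The main computation is the wreath relation. We evaluate $H_i x_i^{\bar n a}x_{i+1}^{\bar n b}\epsilon_I$ using \cref{lem:SkewDemazureFacts}(a), in which the terms are $x_{i+1}^{\ell}S_i x_i^{K-\ell}$ times a symmetric factor. Moving $S_i$ to the right via \cref{lem:SxxS}, each term becomes $x_{i+1}^{\ell}x_i^{K-\ell}S_i^{(\ell-K)}$. Multiplying by $\epsilon_I$ on the right, $e_i^{(r)}\epsilon_I=\frac1m\sum_j \xi^{2rj}\epsilon_I$, which equals $\epsilon_I$ if $n\mid 2r$ (that is, $\bar n\mid r$) and $0$ otherwise. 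Only the terms with $\bar n\mid(K-\ell)$ survive, leaving exactly $(q-1)\frac{P-\sigma_i(P)}{x_i^{\bar n}-x_{i+1}^{\bar n}}x_i^{\bar n}$ as in \cref{ex:maha}. I expect this bookkeeping, including the symmetric prefactor and the case $a<b$, to be the main obstacle, and I would first do it for $P=x_i^{\bar nK}$ and then extend using the twisted Leibniz rule.

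\emph{Step 3: bijectivity.} For surjectivity, \cref{thm:SPQWPPBW} shows that the corner is spanned by the elements $\epsilon_I (f x^\lambda) H_w\epsilon_I=\epsilon_I f x^\lambda\epsilon_I H_w$. Since $\epsilon_I f=\epsilon_I$ by \cref{lem:idemlem}(e), these reduce to $\epsilon_I x^\lambda\epsilon_I H_w$, which vanishes unless $\lambda\in\bar nY$ by \cref{lem:idemlem}(b). Hence the image of the basis $x^{\bar n\lambda}H_w$ spans the corner. For injectivity, expand $\epsilon_I x^{\bar n\lambda}H_w$ in the PBW basis of \cref{thm:SPQWPPBW}, written in the $\tp{j}$ form. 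It involves the distinct monomials $\epsilon_I x^{\bar n\lambda}H_w$ with nonzero coefficient, so these images are linearly independent.
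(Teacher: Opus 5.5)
Your proposal is correct and follows essentially the same route as the paper. In both, $\Upsilon$ is shown to respect the braid, quadratic and wreath relations, using that $\epsilon_I$ is an idempotent commuting with $\Fr^{\otimes d}$, with the $H_i$ and with the $x_j^{\pm\bar n}$; bijectivity then comes from the PBW basis of \cref{thm:SPQWPPBW} together with the vanishing in \cref{lem:idemlem}(b). The only difference is that the paper checks the wreath relation just for the minuscule generator $x_1^{\bar n}$, discarding the cross terms $x_2^{\ell}Sx_1^{\bar n-\ell}$ for $0<\ell<\bar n$ via \cref{lem:idemlem}(b), whereas you treat all monomials at once through the character sum $e^{(r)}\epsilon_I$, which is the same computation in slightly greater generality.
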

\proof
Thanks to \cref{thm:SPQWPPBW} and \cref{lem:idemlem}(b), this map $\Upsilon$ is a bijection, and it remains to check that $\Upsilon$ is also an algebra homomorphism.
Note that by \cref{lem:idemlem}(a) and (d),  
\[
\Upsilon(H_iH_j)=
(\epsilon_I H_i \epsilon_I)(\epsilon_I H_j\epsilon_I)
=
\epsilon_I H_i \epsilon_I^2 H_j \epsilon_I
=
\epsilon_I H_i \epsilon_I H_j \epsilon_I
= 
\epsilon_I H_i  H_j \epsilon_I^2
=
\epsilon_I H_i H_j \epsilon_I
=
\Upsilon (H_iH_j),
\]
and hence the braid relations are preserved.
For the quadratic relations, we have
\[
\Upsilon(H_i)^2
=
\epsilon_I H_i  \epsilon_I H_i \epsilon_I
=
\epsilon_I H_i^2  \epsilon_I
=
\epsilon_I (S_iH_i+R_i)  \epsilon_I
=
\epsilon_I S_i H_i\epsilon_I^2 +  q \epsilon_I,
\]
where we use \cref{lem:idemlem}(a) and (d). 
Note that by \cref{lem:idemlem}(e) we have $ \epsilon_I S_i= (q-1) \epsilon_I$.
Thus, $\Upsilon(H_i)^2 = (q-1) \Upsilon(H_i) + q\epsilon_I = \Upsilon(H_i^2)$.

Finally, for the wreath relations, it suffices to verify the $d=2$ case when $\lambda$ is the minuscule weight, i.e., $\lambda = \bar{n} \epsilon_1$. 
It follows from \cref{lem:SkewDemazureFacts}(b) that, by setting $P = x_1^{\bar{n}}$, the following holds in $\bbA\wr\cH(d)$:
\[
H_1 x_1^{\bar{n}} 
=  x_2^{\bar{n}}  H_1 + S x_1^{\bar{n}} + x_2 S x_1^{\bar{n}-1}
+ \dots + x_2^{\bar{n}-1} S x_1.
\]
Applying \cref{lem:idemlem}(b) and (e), we obtain
\[
\begin{split}
\Upsilon(H_1) \Upsilon(x_1^{\bar{n}})
&=(\epsilon_I H_1 \epsilon_I)(\epsilon_I x_1^{\bar{n}} \epsilon_I)
\\
&= \epsilon_I( x_2^{\bar{n}} H_1 + S x_1^{\bar{n}} + x_2 S x_1^{\bar{n}-1}
+ \dots + x_2^{\bar{n}-1} S x_1)\epsilon_I
=
(\epsilon_I x_2^{\bar{n}}\epsilon_I)(\epsilon_I  H_1 \epsilon_I) + (\epsilon_I Sx_1^{\bar{n}}\epsilon_I)
\\
&= \Upsilon(x_2^{\bar{n}}) \Upsilon(H_1) + (q-1) \Upsilon(x_1^{\bar{n}}).
\end{split}
\]
Therefore the map $\Upsilon$ is indeed an isomorphism.
\endproof
\cref{thm:eHe} is an algebraic version of the isomorphism~\eqref{eq:proptoIwahori}. 
It shows how skew quantum wreath products naturally keep track of reduction in support in $\HGbI$ (\cref{lem:idemlem}(b)), as well as imply 
\begin{equation}\label{eq:Iwahoriqwp}
	\HGbI \cong \CC[x^{\pm \bar{n}}]\wr\cH(d). 
\end{equation}

\subsection{Identification of Hecke modules}
We want to compare the module $\WbIp$ introduced in~\eqref{eq:defWIp} with $\bbV(1) \wr \sgn$ as modules over $\HGbIp \cong \bbA  \wr \cH(d)$. 
Before we can do that, we need to rescale the basis of $\WbIp$ in order to ``get rid'' of the Gauss sums. 
These Gauss sums do not appear naturally on the algebraic side, but contain important number theoretic information, so we want to normalize them away in such a way that we can reintroduce them back later if necessary. 

\subsubsection{Rescaling the $\Vk$ and $\WbIp$ bases}
Recall the basis $\{c^\bg(\chi)\}$ of $\Vk$ from \S\ref{subsub:finiteGG}. 
Identify $\chi \in \Hom(T_\kappa, \CC^*)$ with $\gamma \in [m]^d$ as we did in \S\ref{subsec:toral}. 
The action of $\cT_{s_i}\in \HGbIpk$ from~\eqref{eq:action-finite-Vk} reads in this setup:
\begin{equation}\label{eq:action-finite-Vk-2}
	c^\bg(\chi) \cT_{\sroot_i} = \bg_{\gamma_i-\gamma_{i+1}} c^\bg(s_i \chi).
\end{equation}
Let us define a new basis $c^\bq(\chi)$ of $\Vk$ as follows:
\begin{equation}
	c^\bq (\chi) := \left( \prod_{\gamma_i>\gamma_{i+1}} \frac{\sqrt{q}}{\bg_{\gamma_i-\gamma_{i+1}}} \right) c^\bg(\chi).
\end{equation}
\begin{lem}
	The action in~\eqref{eq:action-finite-Vk-2} on the basis $\{c^\bq (\chi)\}$ reads:
	\begin{equation}\label{eq:tiactionfinitepadic}
		c^\bg(\chi) \cT_{\sroot_i} =
		\begin{cases}
			-c^\bg(s_i \chi), &\textup{if } \gamma_i=\gamma_{i+1};
			\\
			 \sqrt{q}c^\bg(s_i \chi), &\textup{if } \gamma_i \neq \gamma_{i+1}.
		\end{cases}
	\end{equation}
\end{lem}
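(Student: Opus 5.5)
The plan is a direct computation from \eqref{eq:action-finite-Vk-2} together with the definition of the rescaled basis. (The displayed statement writes $c^\bg$ where it evidently means $c^\bq$, since the claim concerns the action on the new basis $\{c^\bq(\chi)\}$; I will prove it in that form.)

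Write $N(\chi):=\prod_{\gamma_j>\gamma_{j+1}} \sqrt{q}/\bg_{\gamma_j-\gamma_{j+1}}$, so that $c^\bq(\chi)=N(\chi)c^\bg(\chi)$. Then
\[
c^\bq(\chi)\cT_{\sroot_i} = N(\chi)\,\bg_{\gamma_i-\gamma_{i+1}}\,N(s_i\chi)^{-1}\, c^\bq(s_i\chi),
\]
and everything reduces to computing the scalar $N(\chi)\bg_{\gamma_i-\gamma_{i+1}}N(s_i\chi)^{-1}$.

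\emph{Case $\gamma_i=\gamma_{i+1}$.} Here $s_i\chi=\chi$, so $N(s_i\chi)=N(\chi)$. By \eqref{eq:gauss-sums-properties}, $\bg_0=-1$, so the scalar is $-1$, as claimed.

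\emph{Case $\gamma_i\neq\gamma_{i+1}$.} Compare the descent sets of $\gamma$ and $s_i\gamma$. Only adjacent pairs touching positions $i$ and $i+1$ can change, so I will check those.

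The pair $(i,i+1)$ is swapped: it is a descent for $\gamma$ exactly when $\gamma_i>\gamma_{i+1}$, and for $s_i\gamma$ exactly when $\gamma_{i+1}>\gamma_i$.

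The neighbouring pairs $(i-1,i)$ and $(i+1,i+2)$ compare different entries after the swap, namely $\gamma_{i-1}$ against $\gamma_{i+1}$ instead of $\gamma_i$. So the factors $N(\chi)$ and $N(s_i\chi)$ need not agree on them.

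This is the main obstacle. Read literally over adjacent descents, $N$ is not compatible with the $s_i$-action. I would resolve it by interpreting the product as running over all pairs $j<k$ with $\gamma_j>\gamma_k$, that is, over inversions, with factor $\sqrt{q}/\bg_{\gamma_j-\gamma_k}$. I believe this is the intended normalization, and it is the one for which the lemma holds for all $d$. With this reading, $s_i$ merely permutes the inversion pairs other than $(i,i+1)$, carrying each pair to one with the same ordered values $(\gamma_j,\gamma_k)$. So all factors cancel except the one coming from the pair $(i,i+1)$. (For $d=2$ the two readings coincide.)

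The remaining factor splits into two subcases.

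If $\gamma_i>\gamma_{i+1}$, the pair $(i,i+1)$ contributes $\sqrt{q}/\bg_{\gamma_i-\gamma_{i+1}}$ to $N(\chi)$ and nothing to $N(s_i\chi)$. The scalar is then $\frac{\sqrt q}{\bg_{\gamma_i-\gamma_{i+1}}}\cdot\bg_{\gamma_i-\gamma_{i+1}}=\sqrt q$.

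If $\gamma_i<\gamma_{i+1}$, the pair contributes $\sqrt{q}/\bg_{\gamma_{i+1}-\gamma_i}$ to $N(s_i\chi)$ and nothing to $N(\chi)$. The scalar is then $\bg_{\gamma_i-\gamma_{i+1}}\bg_{\gamma_{i+1}-\gamma_i}/\sqrt q$. Since $\gamma_i\neq\gamma_{i+1}$ in $[m]$, we have $\gamma_i-\gamma_{i+1}\not\equiv0\pmod m$, so \eqref{eq:gauss-sums-properties} gives $\bg_k\bg_{-k}=q$, and the scalar is $q/\sqrt q=\sqrt q$.

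Both subcases give $\sqrt q$, which completes the verification.
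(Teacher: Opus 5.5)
Your argument is the paper's own: the same three cases, using $\bg_0=-1$ when $\gamma_i=\gamma_{i+1}$, cancelling $\sqrt q/\bg_{\gamma_i-\gamma_{i+1}}$ against $\bg_{\gamma_i-\gamma_{i+1}}$ when $\gamma_i>\gamma_{i+1}$, and using $\bg_k\bg_{-k}=q$ when $\gamma_i<\gamma_{i+1}$; your reading of $c^\bg$ as $c^\bq$ in the display also matches the paper's proof. Your concern about the normalization is justified and points to a step the paper's proof skips: it assumes $N(\chi)/N(s_i\chi)$ is only the factor from the pair $(i,i+1)$, which for the adjacent-descent product as written holds for $d=2$ but not in general (for $d=3$, $\gamma=(2,1,0)$, $i=1$ the scalar is $\sqrt q\,\bg_2/\bg_1$), so your inversion-based normalization is exactly what makes the lemma and the paper's argument valid for all $d$.
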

\begin{proof}
	If $\gamma_i=\gamma_{i+1}$, then $c^{\bg / \bq}(\chi) = c^{\bg / \bq}(s_i\chi)$ and $\bg_0=-1$, so $c^\bg(\chi) \cT_{\sroot_i} = - c^\bg(s_i \chi) \iff c^\bq(\chi) \cT_{\sroot_i} = - c^\bq(s_i \chi)$. 
	Otherwise, write $c^\bq(\chi) = x c^{\bg}(\chi)$.
	
	If $\gamma_i>\gamma_{i+1}$, then 
	$\frac{\sqrt{q}}{\bg_{\gamma_i-\gamma_{i+1}}} c^\bq(s_i \chi) = x c^{\bg}(s_i \chi)$. 
	So~\eqref{eq:action-finite-Vk-2} implies $ c^\bq(\chi) \cT_{\sroot_i} = \sqrt{q} c^\bq(s_i \chi)$.
	
	If $\gamma_i<\gamma_{i+1}$, then $\frac{\bg_{\gamma_{i+1}-\gamma_{i}}}{\sqrt{q}} c^\bq(s_i \chi) = x c^{\bg}(s_i \chi)$ . 
	By~\eqref{eq:gauss-sums-properties}, we have $\frac{\bg_{\gamma_{i+1}-\gamma_{i}} \bg_{\gamma_{i}-\gamma_{i+1}}}{\sqrt{q}} = \sqrt{q}$, and then~\eqref{eq:action-finite-Vk-2} implies $ c^\bq(\chi) \cT_{\sroot_i} = \sqrt{q} c^\bq(s_i \chi)$ again.
\end{proof}

The new basis ${c^\bq(\chi)}$ of $\Vk$ induces a new basis $\{c^\bq(\chi) Y_\lambda\}$ of $\WbIp$.
Because of the description of $\WbIp \cong \Vk \otimes_{\HGbIpk} \HGbIp$ in Proposition~\ref{prop:affineWIpinduced}, we have that equation~\eqref{eq:tiactionfinitepadic} holds in the $\WbIp$ as well. 
\subsubsection{The main results}

We work with $N=1$, that is $\bbV(1) = M_1$.
Combining \eqref{eq:MidbasisA}, Proposition \ref{prop:MlambdaA}, and \eqref{tintot'}, the set
$\{v'_{a,\lambda} ~|~ a= (a_1, \dots, a_d)\in[m]^d, \lambda \in \ZZ^d\}$ forms a $\CC$-basis of the
$\bbA\wr\cH(d)$-module $\bbV(1)^{\otimes d} \cong \bbV(1)\wr \sgn$, where
\begin{equation}
v'_{a,\lambda} := 
v^+
(\tp{a_1}\otimes \dots \otimes \tp{a_d})
(x^{\lambda_1}\otimes \dots \otimes x^{\lambda_d}),
\quad
v^+ := v_1^{\otimes d}.
\end{equation}    

\begin{thm}\label{thm:VGG2}
There is an $\bbA \wr \cH(d) \cong \HGbIp$-module isomorphism 
\[
\bbV(1)^{\otimes d} \cong 
\bbV(1) \wr \sgn  
\cong \WbIp
\]
determined by $v'_{a,0} \mapsto c^\bq(\chi^{a})$ for $a_i \in [m]$.
\end{thm}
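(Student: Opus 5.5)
The plan is to reduce the comparison to the finite level and then induce. On the algebraic side, $\bbV(1)\wr\sgn=\bbA\wr\sgn=\asph\cong \sgn\otimes_{\bH_Y}\bH$, where $\bH_Y=\Fr\wr\cH(d)$. Here the sign module $\sgn=\Fr^{\otimes d}$ is spanned by $v^+(\tp{a_1}\otimes\dots\otimes\tp{a_d})$, and $H_i$ acts through $-\al_i$ as in \eqref{def:WMaction}. On the $p$-adic side, Proposition~\ref{prop:affineWIpinduced} gives $\WbIp\cong \Vk\otimes_{\HGbIpk}\HGbIp$. Under $\Psi$ (Theorem~\ref{thm:PQWP2}) we have $\HGbIpk\cong\bH_Y$, and $\Psi$ restricts to this isomorphism. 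It therefore suffices to show that the $\bH_Y$-module $\sgn$ is isomorphic, via $v'_{a,0}\mapsto c^\bq(\chi^a)$, to $\Vk$ regarded as a module through $\Psi$. Applying $-\otimes_{\bH_Y}\bH$ and using the PBW basis (Theorem~\ref{thm:SPQWPPBW}) then gives the isomorphism. This is compatible with the bases $\{v'_{a,\lambda}\}$ and $\{c^\bq(\chi^a)Y_\lambda\}$, since $v'_{a,\lambda}=v'_{a,0}x^\lambda$ and $\Psi(x^\lambda)=Y_\lambda$.

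For the finite comparison, both modules are free of rank one over $\Fr^{\otimes d}$, with idempotents $\tp{a_1}\otimes\dots\otimes\tp{a_d}=c(\chi^a)$ acting diagonally. First I will check that the $\Fr^{\otimes d}$-actions match: $c^\bq(\chi^a)c(\chi^b)=\delta_{a,b}c^\bq(\chi^a)$, and the same holds for $v'_{a,0}$. It then remains to match the action of $H_i$ with that of $\cT_{\sroot_i}$. Using \eqref{eq:tiactionfinitepadic}, $c^\bq(\chi^a)\cT_{\sroot_i}$ equals $-c^\bq(\chi^a)$ when $a_i=a_{i+1}$, and $\sqrt q\,c^\bq(s_i\chi^a)$ otherwise. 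On the algebraic side I will compute $v'_{a,0}\cdot H_i=v^+\,(-\al_i)\,\sigma_i(\tp{a})$. To do this I rewrite $\al_i=(\sqrt q\,t_i^k+1)e_i-\sqrt q\,t_i^k$ (with $k=0$ for $\GL_d$) in the $\tp{}$-basis. The key computation is that $e_i$ acts on $\tp{a_i}\otimes\tp{a_{i+1}}$ by the indicator of $a_i=a_{i+1}$, using $t\cdot\tp{j}=u^{-j}\tp{j}$. This gives $-\al_i\,\tp{a}=(1-(\sqrt q+1)\delta_{a_i,a_{i+1}})\sqrt q$-type scalars. More precisely, the scalar is $-1$ when $a_i=a_{i+1}$ and $\sqrt q$ otherwise, applied to the flipped idempotent $\sigma_i(\tp{a})=c(\chi^{s_ia})$. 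This matches the $p$-adic formula exactly.

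The step I expect to be the main obstacle is the bookkeeping of conventions. I need to make sure the flip in $H_i f=\sigma_i(f)H_i$ corresponds to $c(\chi)\mapsto c(s_i\chi)$ in the same direction as $c^\bq(\chi)\mapsto c^\bq(s_i\chi)$. I also need to confirm that the sign/normalization choice $-\al_i$ (rather than $\alb_i$) produces $-1$ on the diagonal, and that the Gauss sum rescaling to $c^\bq$ really yields $\sqrt q$ in both directions; the latter uses $\bg_k\bg_{-k}=q$. If the eigenvalue on the off-diagonal came out as $\sqrt q\,t^k$-twisted, I would need the assumption $k=0$ (or $\epsilon=1$) to kill it. Once the generators $H_i$ and $\Fr^{\otimes d}$ are matched on a basis, the statement that $\sgn$ generates $\bbV(1)\wr\sgn$ freely over the $x$'s completes the proof by the universal property of induction.
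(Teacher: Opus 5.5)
Your proposal is correct and follows essentially the paper's route: the key step is the same finite computation, namely that $e$ acts on $\tp{a_i}\otimes\tp{a_{i+1}}$ by $\delta_{a_i,a_{i+1}}$, so $-\al_i$ (with $k=0$) gives $-1$ or $\sqrt q$, matching \eqref{eq:tiactionfinitepadic}; the extension to all $\lambda$ then comes from $\WbIp\cong\Vk\otimes_{\HGbIpk}\HGbIp$ together with $\Psi$, which you make explicit through functoriality of induction, whereas the paper writes out the general-$\lambda$ action formulas and asserts that they match. One small slip: your intermediate scalar ``$(1-(\sqrt q+1)\delta_{a_i,a_{i+1}})\sqrt q$'' is wrong, since it would give $-q$ on the diagonal; the correct scalar is $\sqrt q-(\sqrt q+1)\delta_{a_i,a_{i+1}}$, which is what your ``more precisely'' line actually uses.
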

\begin{proof}
It suffices to consider the $d=2$ case. 
First assume $a= (i,j)$ and $\lambda = 0$. Then, $v'_{a,0} = v^+ (\tp{i}\otimes \tp{j})$, and hence the right $H_1$-action on $v'_{a,0}$ is given by
\begin{align}
\label{eq:v'a0H1}
v'_{a,0} \cdot H_1 &=   v^+ (\tp{i}\otimes \tp{j}) \cdot H_1
\\
\notag&= v^+ \cdot H_1 (\tp{j}\otimes \tp{i}) &\textup{by }\eqref{Leib0}
\\
\notag&= -v^+ \gamma (\tp{j}\otimes \tp{i}). &\textup{by } \eqref{eq:fundaction2}
\end{align}
Using Lemma \ref{lem:idemlem}, we obtain that
    $e(\tp{i}\otimes\tp{j}) = 
        \delta_{i,j} (\tp{i}\otimes\tp{j})$, 
and hence, for $i\neq j$:
\begin{align}
    &\gamma (\tp{j}\otimes \tp{i})
    = (\sqrt{q} + 1)e(\tp{j}\otimes \tp{i}) - \sqrt{q}(\tp{j}\otimes \tp{i})
    =- \sqrt{q}(\tp{j}\otimes \tp{i}),
    \\
    &\gamma (\tp{i}\otimes \tp{i})
    = (\sqrt{q} + 1)e(\tp{i}\otimes \tp{i}) - \sqrt{q}(\tp{i}\otimes \tp{i})
    = (\tp{i}\otimes \tp{i}).
\end{align}
Therefore, \eqref{eq:v'a0H1} becomes 
\begin{equation}
    v'_{(i,j),0} \cdot H_1 = \begin{cases}
        - v'_{(i,j), 0}, &\textup{if } i=j;
        \\
        \sqrt{q} v'_{(j,i), 0}, &\textup{if } i\neq j.
    \end{cases}
\end{equation}
This action matches with the action of $\cT_{\sroot_1}$ on $c^\bq(\chi)$ with corresponding $\gamma = (\gamma_1,\gamma_2)=(i,j)$ from equation \eqref{eq:tiactionfinitepadic}. 

Secondly, let $a = (a_i)_i, b = (b_i)_i\in [m]^2$ and $\lambda = (\lambda_i)_i, \mu = (\mu_i)_i \in \ZZ^2$. Then we have that
\begin{align}
  &v'_{a,\lambda} \cdot (x^{\mu_1}\otimes x^{\mu_2}) 
  = v'_{a, \lambda+\mu},
  \\
  &v'_{a,\lambda} \cdot (\tp{b_1}\otimes \tp{b_2}) 
  = v'_{a,0} (\tp{b_1+l\lambda_1}\otimes \tp{b_2+l\lambda_1})(x^{\lambda_1}\otimes x^{\lambda_2}) ,
  \\
  &v'_{a, \lambda} \cdot H_1
     = 
     - v'_{a, 0} \rho(x^{\lambda_2}\otimes x^{\lambda_1})
     +\begin{cases}
     v'_{a, (\lambda_2,\lambda_1)} (\sqrt{q})            &\textup{if } a_1=a_2;
     \\ \label{eq:actionH1}
     v'_{a, (\lambda_2,\lambda_1)} (-1)
            &\textup{if } a_1\neq a_2,
     \end{cases}
\end{align}
where the second line uses the fact that $x(\tp{i}) = (\tp{i+l})x$, 
and the third line can be further deduced using Lemma \ref{lem:SkewDemazureFacts}.
The matching between the action of $\cT_{\sroot_i}$ on $c^\bq(\chi_{a}) Y_{\lambda}$ and the action in~\eqref{eq:actionH1} follows from the isomorphism in Theorem~\ref{thm:PQWP2} as well as the description of $\WbIp$ from Proposition~\ref{prop:affineWIpinduced}.
This finishes the proof.
\end{proof}

Recall the $p$-adic Schur algebra $\SGbIp:= \End_{\HGbIp} (\WbIp)$ from~\eqref{eq:def:padicSchur}. 
Then Theorem~\ref{thm:VGG2} implies 

\begin{cor}\label{cor:Schuriden}
	There is an algebra isomorphism $\SGbIp \cong \bbS(1,d)$.
\end{cor}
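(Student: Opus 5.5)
The plan is pure transport of structure: the corollary should follow formally from Theorem~\ref{thm:PQWP2} and Theorem~\ref{thm:VGG2}. Write $\bH=\bbA\wr\cH(d)$ and $\Psi:\bH\to\HGbIp$ for the isomorphism of Theorem~\ref{thm:PQWP2}. Via $\Psi$, the right $\HGbIp$-module $\WbIp$ becomes a right $\bH$-module $\Psi^*\WbIp$, with $w\cdot h := w\cdot\Psi(h)$. Since $\Psi$ is bijective, a $\CC$-linear endomorphism of $\WbIp$ commutes with all of $\HGbIp$ exactly when it commutes with all $\Psi(h)$. This gives an equality of subalgebras of $\End_\CC(\WbIp)$:
\[
\End_{\HGbIp}(\WbIp)=\End_{\bH}(\Psi^*\WbIp).
\]

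Next, I will use the $\bH$-module isomorphism $\Phi:\bbV(1)^{\otimes d}\to\Psi^*\WbIp$ of Theorem~\ref{thm:VGG2}, determined by $v'_{a,0}\mapsto c^\bq(\chi^a)$. Conjugation $\theta\mapsto\Phi\circ\theta\circ\Phi^{-1}$ is an algebra isomorphism
\[
\bbS(1,d)=\End_{\bH}(\bbV(1)^{\otimes d})\;\longrightarrow\;\End_{\bH}(\Psi^*\WbIp).
\]
It is well defined because $\Phi$ and $\Phi^{-1}$ are $\bH$-linear, so conjugates of $\bH$-linear maps are again $\bH$-linear. It is multiplicative and unital, and its inverse is conjugation by $\Phi^{-1}$. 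Composing with the equality above gives $\SGbIp\cong\bbS(1,d)$.

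The only step that needs care is checking that the module structure in Theorem~\ref{thm:VGG2} is the one obtained by pulling back along the same $\Psi$ as in Theorem~\ref{thm:PQWP2}. The action of the generators $H_i$, $x_j^{\pm1}$ and $\tp{j}$ is matched there through $\Psi$, so this holds by construction. With that in place, the argument is formal.

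Optionally, one can transport the basis $\{\theta_{A,P}\}$ of Proposition~\ref{prop:mult2} with $N=1$ to obtain an explicit basis of $\SGbIp$. For $N=1$, $\Theta_{1,d}$ has the single element $A=(d)$, with $\delta=(d)$ and $g=1$. The basis is then indexed by $P\in(\bbA^{\otimes d})^{\Sigma_d}$, and the multiplication rule follows from Proposition~\ref{prop:mult2}(b).
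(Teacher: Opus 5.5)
Your proposal is correct and is the paper's argument: the paper deduces the corollary directly from Theorem~\ref{thm:VGG2}, an isomorphism of modules over $\bbA\wr\cH(d)\cong\HGbIp$ (identified via the $\Psi$ of Theorem~\ref{thm:PQWP2}), and your transport-of-structure write-up simply spells that step out. The compatibility check you single out is in fact automatic: $\End_{\HGbIp}(\WbIp)$ equals the commutant of the pulled-back $\bH$-action for \emph{any} algebra isomorphism $\bH\to\HGbIp$, not just for $\Psi$.
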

Consequently, Proposition~\ref{prop:mult2} now gives a basis with a multiplication formula for the algebra $\SGbIp$. 

For $N> 1$, it will be interesting to study the corresponding Schur algebras $\bbS(N,d)$, 
their associated ``quantum groups'',
and possible connections to $p$-adic representation theory.

\subsection{Schur algebras at the Iwahori level}
Denote the original Kashiwara--Miwa--Stern tensor space in~\cite[(32)]{KMS95} by 
$V(\eta)^{\otimes d}\in\mod\hp\CC[x^{\pm\bar{n}}]\wr\cH(d)$; it is a module over the affine Hecke algebra depending on an integer $\eta\geq 1$.
Here,  $V(\eta)= \bigoplus_{i\in\ZZ} \CC \mrm{v}_i$ is the $\CC[x^{\pm\bar{n}}]$-module such that $x^{\pm\bar{n}}$ acts by $\mrm{v}_i \cdot x^{\pm\bar{n}} = \mrm{v}_{i\pm \eta}$.
Write
$\mrm{v}_f:= \mrm{v}_{f(1)} \otimes \cdots \otimes \mrm{v}_{f(d)}$ for any $f:[d] \to \bbZ$.
We note that this space can be recovered using the formulas in Section \ref{sec:KMSforSPQWP} by setting:
\[
m=1,
\quad
n=1,
\quad
\Fr = \CC,
\quad
e = 1,
\quad
\gamma = 1.
\]
Therefore, each $x_i^{\bar{n}}$ acts by   
\[
\mrm{v}_f \cdot x_i^{\bar{n}} = 
\mrm{v}_{f(1)} \otimes \dots \otimes \mrm{v}_{f(i-1)}
\dots \otimes \mrm{v}_{f(i)+\eta} \otimes \mrm{v}_{f(i+1)} \otimes \dots \otimes \mrm{v}_{f(d)},
\quad f:[d]\to \ZZ.
\] 

Recall the space $\bbV(N)^{\otimes d}$ introduced in \S\ref{sec:KMSforSPQWP} for $N \geq 1$. 
In light of the corner algebra identification (Proposition \ref{thm:eHe}), 
the space $\bbV(N)^{\otimes d}\epsilon_I$ is a module over 
$\epsilon_I (\bbA\wr\cH(d)) \epsilon_I$, i.e., the affine Hecke algebra.
\begin{prop}\label{prop:KMSIwa}
As modules over $\CC[x^{\pm\bar{n}}]\wr\cH(d) \cong \epsilon_I (\bbA\wr\cH(d)) \epsilon_I$, there is an isomorphism
\[
\Xi: V(N\bar{n})^{\otimes d} \to \bbV(N)^{\otimes d}\epsilon_I,
\quad
\mrm{v}_f  \mapsto v_f \epsilon_I.
\]
\end{prop}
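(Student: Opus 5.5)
We will show that $\Xi$ is a linear bijection and that it intertwines the generators $x_i^{\pm\bar n}$ and $H_i$ of $\CC[x^{\pm\bar n}]\wr\cH(d)$. Under $\Upsilon$ of Proposition~\ref{thm:eHe}, these generators act on $\bbV(N)^{\otimes d}\epsilon_I$ by right multiplication with $\epsilon_I x_i^{\pm\bar n}\epsilon_I$ and $\epsilon_I H_i\epsilon_I$.

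\textbf{Bijectivity.} The space $\bbV(N)^{\otimes d}$ is free over $\Fr^{\otimes d}$ with basis $\{v_f\}_{f:[d]\to\ZZ}$. By Lemma~\ref{lem:idemlem}(e) we have $f\epsilon_I\in\CC\epsilon_I$ for $f\in\Fr^{\otimes d}$, so $\Fr^{\otimes d}\epsilon_I=\CC\epsilon_I$. Since $\epsilon_I$ is an idempotent of $\Fr^{\otimes d}$, the vectors $v_f\epsilon_I$ therefore form a $\CC$-basis of $\bbV(N)^{\otimes d}\epsilon_I$. Hence $\Xi$ is a linear isomorphism.

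\textbf{Polynomial generators.} By the definition of the $\bbA$-action, $v_j\cdot x^{\bar n}=v_{j+N\bar n}\psi^{-\bar n}(\cdot)$. Moreover $\epsilon_I$ commutes with $x_i^{\bar n}$, since conjugation by $x^{\bar n}$ rescales $t$ by $\xi^{2\bar n}=1$ (this is where $\bar n$ enters). It follows that
\[
v_f\epsilon_I\cdot \epsilon_I x_i^{\bar n}\epsilon_I=v_{f+N\bar n\epsilon_i}\epsilon_I .
\]
This matches $\mrm{v}_f\cdot x_i^{\bar n}=\mrm{v}_{f+\eta\epsilon_i}$ with $\eta=N\bar n$, which explains the parameter $N\bar n$ in the statement.

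\textbf{The generators $H_i$.} Since $\epsilon_I$ commutes with $H_i$ (Lemma~\ref{lem:idemlem}(d)), we need $(v_f\cdot H_i)\epsilon_I$. For $f$ in the fundamental region, $1\le f(j)\le N$, we use \eqref{eq:fundaction2}.
\begin{itemize}
\item If $f(i)<f(i+1)$, the image is $v_{fs_i}\epsilon_I$.
\item If $f(i)=f(i+1)$, the image is $-v_f\al_i\epsilon_I$. By Lemma~\ref{lem:idemlem}(e), $e_i\epsilon_I=\epsilon_I$ and $t^k_i\epsilon_I=\epsilon_I$, so $\al_i\epsilon_I=(\sqrt q+1-\sqrt q)\epsilon_I=\epsilon_I$. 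The image is therefore $-v_f\epsilon_I$.
\item If $f(i)>f(i+1)$, the image is $v_{fs_i}q\,\epsilon_I+(q-1)v_f\epsilon_I$.
\end{itemize}
These coincide with the KMS formulas for $V(N\bar n)^{\otimes d}$ in the fundamental region, which are recovered from Section~\ref{sec:KMSforSPQWP} by setting $e=\gamma=1$.

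\textbf{From the fundamental region to all $f$.} Write a general $f$ as $f_0+N\bar n\lambda$ plus a correction that is not a multiple of $\bar n$. This is the main obstacle: in $\bbV(N)^{\otimes d}$, shifts by $N$ rather than $N\bar n$ occur, so the fundamental region for the $\epsilon_I$-corner is $1\le f(j)\le N\bar n$, not $1\le f(j)\le N$.

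To handle this, first extend the three-case formula to $1\le f(j)\le N\bar n$. Such an $f$ has the form $v_{f_0}x^\mu$ with $0\le\mu_j<\bar n$. We compute $(v_{f_0}x^\mu\cdot H_i)\epsilon_I$ via the wreath relation $x^\mu H_i=H_i\sigma_i(x^\mu)-\ldots$, using the explicit values of $\rho$ from Lemma~\ref{lem:SkewDemazureFacts}(a).
\begin{itemize}
\item We expect the terms $x_{i+1}^\ell S_ix_i^{k-\ell}$ to collapse after multiplication by $\epsilon_I$, via $S_i\epsilon_I=(q-1)\epsilon_I$ and Lemma~\ref{lem:SxxS}, where $e^{(j)}\epsilon_I$ vanishes unless $\bar n\mid j$.
\item After this collapse, the result should reproduce the KMS trichotomy according to the order of $f(i)$ and $f(i+1)$. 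The bookkeeping requires care and is essentially the computation in the example following Proposition~\ref{prop:KMSaction}, specialised through $\epsilon_I$.
\end{itemize}

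With the formula established on this enlarged region, both sides are modules on which the $x_i^{\pm\bar n}$ act by the same shifts. Uniqueness of a module structure extending the fundamental-region action, in the sense of the uniqueness in Proposition~\ref{prop:KMSaction}(a) applied to $\CC[x^{\pm\bar n}]\wr\cH(d)$, then shows that $\Xi$ intertwines everything.
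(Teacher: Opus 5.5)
\paragraph{Verdict.} There is a genuine gap, and it sits in the step you leave as an expectation. In fact that step fails, so the statement cannot be proved for the map $\Xi$ as written once $\bar n\ge 2$.

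\paragraph{What is correct.} Your bijectivity argument, the check for $x_i^{\pm\bar n}$, and the three-case check of $H_i$ for $1\le f(j)\le N$ are correct. The last one uses $\gamma_i\epsilon_I=\epsilon_I$ and $S_i\epsilon_I=(q-1)\epsilon_I$. Together these three steps are essentially the paper's proof. You are also right that they do not suffice. The fundamental region of $V(N\bar n)^{\otimes d}$ is $\{1,\dots,N\bar n\}^d$, and the paper's case-by-case comparison does not treat the extra points either.

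\paragraph{Where it fails.} Your predicted outcome on the enlarged region is false as soon as $\bar n\ge 2$, equivalently $\xi^2\neq 1$. Moving $\epsilon_I$ leftward past $x^\mu$ gives the idempotent $\bigotimes_j\psi^{\mu_j}(t')$, not $\epsilon_I$. If $\mu_i\not\equiv\mu_{i+1}\pmod{\bar n}$, this idempotent is killed by $e_i$. For instance, $t\,\psi(t')=\xi^{-2}\psi(t')$ gives $e(\psi(t')\otimes t')=\tfrac1m\sum_{a=1}^m\xi^{-2a}\,\psi(t')\otimes t'=0$. So the $\rho$-terms do collapse as you say. But $\gamma_i$ then acts by $-\sqrt q$ rather than $1$, and the $(q-1)e_i$-term disappears.

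\paragraph{Counterexample.} Take $d=2$, $N=1$, $k=0$ and $v^+=v_1\otimes v_1$. Use $x_2H_1=H_1x_1-S_1x_1$ and $x_1\epsilon_I=(\psi(t')\otimes t')x_1$. Then
\[
(v_1\otimes v_2)\epsilon_I\cdot\Upsilon(H_1)=-v^+(\gamma_1+S_1)(\psi(t')\otimes t')\,x_1=\sqrt q\,(v_2\otimes v_1)\epsilon_I .
\]
The quadratic relation then gives $(v_2\otimes v_1)\epsilon_I\cdot\Upsilon(H_1)=\sqrt q\,(v_1\otimes v_2)\epsilon_I+(q-1)(v_2\otimes v_1)\epsilon_I$. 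However, $(1,2)$ lies in the fundamental region of $V(\bar n)^{\otimes 2}$. There $\mathrm{v}_{(1,2)}\cdot H_1=\mathrm{v}_{(2,1)}$ and $\mathrm{v}_{(2,1)}\cdot H_1=q\,\mathrm{v}_{(1,2)}+(q-1)\mathrm{v}_{(2,1)}$. Hence $\Xi$, as literally defined, is not a module homomorphism when $\bar n\ge 2$, and no bookkeeping will complete this step. Your closing uniqueness argument is sound, but it is being fed the wrong fundamental-region data.

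\paragraph{What your strategy does prove.} If you carry out the enlarged-region computation, you get that the two modules are isomorphic after rescaling $\Xi$ diagonally by powers of $\sqrt q$. This is the same effect as the factor $\sqrt q$ for $i\neq j$ in the proof of Theorem~\ref{thm:VGG2}. Indeed, $(v_1\otimes v_2)\epsilon_I=v^+(t'\otimes\psi(t'))x_2$ lies in an off-diagonal toral component.

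For $N=1$, the general computation on $\{1,\dots,\bar n\}^d$ shows that $v_f\epsilon_I\cdot\Upsilon(H_i)$ equals $\sqrt q\,v_{fs_i}\epsilon_I$, $-v_f\epsilon_I$, or $\sqrt q\,v_{fs_i}\epsilon_I+(q-1)v_f\epsilon_I$, according as $f(i)<f(i+1)$, $f(i)=f(i+1)$ or $f(i)>f(i+1)$. So define $\mathrm{inv}(f)=\#\{a<b: f(a)>f(b)\}$ for $f\in\{1,\dots,\bar n\}^d$. The map $\mathrm{v}_f\mapsto(\sqrt q)^{\mathrm{inv}(f)}\,v_f\epsilon_I$ on these $f$, extended $x^{\bar n}$-equivariantly, is then an isomorphism $V(\bar n)^{\otimes d}\cong\mathbb{V}(1)^{\otimes d}\epsilon_I$, by your uniqueness step.

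For $N\ge 2$, the exponent must also depend on the residues modulo $N$. For example, $(v_2\otimes v_3)\epsilon_I\cdot\Upsilon(H_1)=q\,(v_3\otimes v_2)\epsilon_I$ in $\mathbb{V}(2)^{\otimes 2}\epsilon_I$, whereas $\mathrm{v}_{(2,3)}\cdot H_1=\mathrm{v}_{(3,2)}$. Only when $\bar n=1$ do the two fundamental regions coincide, and in that case your argument is complete as written.
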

\begin{proof}
Firstly, 
recall that $t'$ commutes with $x^{\bar{n}}$ in $\bbA$ since
\[
x^{\bar{n}} t' = x^{\bar{n}} (\tp{0})
= (\tp{2l\bar{n}}) x^{\bar{n}}
= (\tp{0}) x^{\bar{n}},
\]
where the second equality uses \cref{lem:mpSkew}, and the third equality uses the fact that $n$ divides $2\bar{n}$.
One then deduces from Lemma \ref{lem:idemlem}(a) that  
$(v_i t') \cdot (t' x^{\bar{n}} t') = v_{i+N\bar{n}} t' \in \bbV(N) t'$.
Recall the isomorphism $\Upsilon$ from \cref{thm:eHe}.
We have
\[
(v_f \epsilon_I) \cdot \Upsilon(x_i^{\bar{n}})
=
(v_f \epsilon_I) \cdot (\epsilon_I x_i^{\bar{n}} \epsilon_I) 
= v_{f} x_i^{N\bar{n}} \epsilon_I 
= \Xi(\mrm{v}_f \cdot x_i^{\bar{n}}).
\]
On the other hand, since $e\epsilon_I = \epsilon_I$, we have
\begin{equation}\label{eq:degKMSaction}
\gamma \epsilon_I = (\sqrt{q} + 1)e \epsilon_I - \sqrt{q} \epsilon_I = \epsilon_I,
\quad
S \epsilon_I = (q-1)\epsilon_I,
\end{equation}
and hence
\[
(v_f \epsilon_I) \cdot \Upsilon(H_i)
=
(v_f \epsilon_I) \cdot (\epsilon_I H_i\epsilon_I) 
= \Xi(\mrm{v}_f \cdot H_i),
\]
where the last equality follows from a case-by-case comparison by combining \cref{prop:KMSaction} and \eqref{eq:degKMSaction}. 
We have verified that $\bbV(N)^{\otimes d}\epsilon_I \cong V(N\bar{n})^{\otimes d}$ as modules over the affine Hecke algebra. 
\end{proof}
Therefore, for any $\bar{n} \geq 1$, the KMS tensor space $V(\bar{n})^{\otimes d}$ can be recovered from the pro-$p$ module $\bbV(1)^{\otimes d}$ via the idempotent $\epsilon_I$.
Moreover, one has the following $p$-adic corollary. 

\newcommand{\Haff}{Haff}

From~\eqref{eq:proptoIwhaoriGG}, \eqref{eq:Iwahoriqwp} and the identification of $e_I$ with $\epsilon_I$ and~\cref{prop:KMSIwa} we get that
\begin{equation}\label{eq:KMStoWI}
	\WbI \cong V(\bar{n})^{\otimes d} \textup{ as } \HGbI \cong \CC[x^{\pm\bar{n}}]\wr\cH(d) \textup{ modules.}
\end{equation}
Denote the well known quantum affine Schur algebra by $\widehat{S}_{\sqrt{q}}(\bar{n},d) := \End_{\CC[x^{\pm\bar{n}}]\wr\cH(d)}(V(\bar{n})^{\otimes d})$~\cite{G99}. 
It is known to be a quotient of the quantum affine group $U_{\sqrt{q}}(\widehat{\mathfrak{gl}}_{\bar{n}})$. 
\begin{cor}
	We have the following isomorphism of Schur algebras:
	\begin{equation}
		\End_{\HGbI} \WbI \cong \End_{\CC[x^{\pm\bar{n}}]\wr\cH(d)} (V(\bar{n})^{\otimes d}) \cong	  \widehat{S}_{\sqrt{q}}(\bar{n},d) \twoheadleftarrow U_{\sqrt{q}}(\widehat{\mathfrak{gl}}_{\bar{n}}).
	\end{equation}
\end{cor}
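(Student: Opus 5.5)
The corollary is a chain of three identifications, and I would establish them one link at a time.

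\emph{First isomorphism.} I start from the $p$-adic corner identities $\HGbI\cong e_I\HGbIp e_I$ from \eqref{eq:proptoIwahori} and $\WbI\cong\WbIp e_I$ from \eqref{eq:proptoIwhaoriGG}. Transporting along $\Psi^{-1}$ (Theorem~\ref{thm:PQWP2}), the idempotent $e_I$ goes to $\epsilon_I$. Theorem~\ref{thm:VGG2} gives $\WbIp\cong\bbV(1)^{\otimes d}$, so $\WbI\cong\bbV(1)^{\otimes d}\epsilon_I$ as modules over $\epsilon_I(\bbA\wr\cH(d))\epsilon_I$.

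Proposition~\ref{thm:eHe} identifies this corner algebra with $\CC[x^{\pm\bar n}]\wr\cH(d)$. I need to check that this identification is compatible with $\Psi_I$ of Proposition~\ref{prop:IHtoPQWP}. That is a check on generators: both maps send $x^{\bar n\lambda}\mapsto Y_{\bar n\lambda}$ (up to $e_I$) and $H_w\mapsto\cT_w$.

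Proposition~\ref{prop:KMSIwa} with $N=1$ then gives $\bbV(1)^{\otimes d}\epsilon_I\cong V(\bar n)^{\otimes d}$. Combining these yields \eqref{eq:KMStoWI}. An isomorphism of algebras $\HGbI\cong\CC[x^{\pm\bar n}]\wr\cH(d)$, together with a compatible isomorphism of modules, induces an isomorphism of endomorphism algebras. This gives the first isomorphism.

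\emph{Second isomorphism.} This is the definition of $\widehat S_{\sqrt q}(\bar n,d)$, once we know that $\CC[x^{\pm\bar n}]\wr\cH(d)$ is the affine Hecke algebra and that the module structure is the one in \cite{G99}. Example~\ref{ex:maha} gives the rescaling isomorphism $x^{\bar n}\mapsto x$ to $\HA$. The step I expect to need care is matching normalizations. \cite{KMS95,G99} use a Hecke parameter $\sqrt q$ with quadratic relation normalized differently from $H^2=(q-1)H+q$. So I would check that under $x^{\bar n}\mapsto x$, and after the rescaling $T=H/\sqrt q$ if needed, the action \eqref{eq:fundaction2} specialized to $m=n=1$ is exactly the KMS action with $\eta=\bar n$. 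That action is: $\mathrm v_f\mapsto\mathrm v_{fs_i}$ for $f(i)<f(i+1)$, $-\mathrm v_f$ on ties, and $q\,\mathrm v_{fs_i}+(q-1)\mathrm v_f$ otherwise.

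\emph{Surjection.} The surjection $U_{\sqrt q}(\widehat{\mathfrak{gl}}_{\bar n})\twoheadrightarrow\widehat S_{\sqrt q}(\bar n,d)$ is the known result of \cite{G99} (see also \cite{DF15}). I would quote it rather than reprove it.
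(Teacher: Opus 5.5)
Your proposal is correct and follows essentially the same route as the paper. Both combine $\WbI\cong\WbIp e_I$, Theorem~\ref{thm:VGG2}, the identification $e_I\leftrightarrow\epsilon_I$, Proposition~\ref{thm:eHe}, and Proposition~\ref{prop:KMSIwa} with $N=1$ to get \eqref{eq:KMStoWI}; after that, the second isomorphism is the definition of $\widehat{S}_{\sqrt{q}}(\bar{n},d)$ and the surjection is quoted from \cite{G99}.

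Your compatibility check with $\Psi_I$ is unnecessary. The isomorphism \eqref{eq:Iwahoriqwp} is itself the corner composite $\HGbI\cong e_I\HGbIp e_I\cong\epsilon_I(\bbA\wr\cH(d))\epsilon_I\cong\CC[x^{\pm\bar n}]\wr\cH(d)$, and the module isomorphism is transported along that composite automatically.
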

This Corollary and equation~\eqref{eq:KMStoWI} recover the main results of~\cite[\S4]{GGK2}.

\bibliography{bl}{}
\bibliographystyle{alphaabbr}

\end{document}